\definecolor{verde}{rgb}{0,0.5,0}
\newcommand{\bn}{\mathbf{n}}
\newcommand{\bx}{\mathbf{x}}
\newcommand{\by}{\mathbf{y}}
\newcommand{\jump}[1]{\llbracket #1 \rrbracket}
\newcommand{\Sh}{\mathcal{E}_h}
\newcommand{\dxi}{d\xi}
\newcommand{\dx}{d\mathbf{x}}
\newcommand{\EMPTY}{\,\cdot\,}
\newcommand{\EOD}{\end{document}}
\newcommand{\abs}[1]{\left\lvert #1\right\rvert}
\newcommand{\seminorm}[2][]{\left\lvert{#2}\right\rvert_{#1}}
\newcommand{\norm}[2][]{\left\lVert{#2}\right\rVert_{{#1}}}
\newcommand{\ennorm}[2][]{\left\lvert\!\left\lvert\!\left\lvert{#2}\right\rvert\!\right\rvert\!\right\rvert_{#1}}
\newcommand{\scal}[3][]{\left(#2,#3\right)_{\! #1}}
\newcommand{\jmp}[1]{\left\llbracket {#1} \right\rrbracket}
\newcommand{\dtot}[3][1]{\frac{\mathrm{d}\ifstrequal{#1}{1}{}{^{#1}}#2}{\mathrm{d}#3\ifstrequal{#1}{1}{}{^{#1}}}}
\newcommand{\pder}[3][1]{\frac{ \partial \ifstrequal{#1}{1}{}{^{#1}} #2 }
  {\partial #3 \ifstrequal{#1}{1}{}{^{#1}} }}
\newcommand{\Dtot}[3]{\frac{\mathrm{D}\ifstrequal{#1}{1}{}{^{#1}}#2}%
  {\mathrm{D} #3\ifstrequal{#1}{1}{}{^{#1}}}}
\newcommand{\defeq}{:=}
\renewcommand{\a}[3][]{\ifstrempty{#2#3}{a_{#1}}{a_{#1}\left(#2,#3\right)}} 
\renewcommand{\b}[3][]{\ifstrempty{#2#3}{b_{#1}}{b_{#1}\left(#2,#3\right)}}
\renewcommand{\c}[3][]{\ifstrempty{#2#3}{c_{#1}}{c_{#1}\left(#2,#3\right)}}
\renewcommand{\d}[3][]{\ifstrempty{#2#3}{d_{#1}}{d_{#1}\left(#2,#3\right)}}
\newcommand{\aE}[3][E]{\ifstrempty{#2#3}{a^{#1}}{a^{#1}\left(#2,#3\right)}}
\newcommand{\ahE}[3][E]{\ifstrempty{#2#3}{a_h^{#1}}{a_h^{#1}\left(#2,#3\right)}}
\newcommand{\vemstab}[3][]{S^{#1}\!\ifstrempty{#2#3}{}{\left(#2,#3\right)}}
\newcommand{\vemstabAST}[3][]{S^{#1}_\ast\!\ifstrempty{#2#3}{}{\left(#2,#3\right)}}
\newcommand{\shE}[2]{\vemstab[E]{#1}{#2}}
\renewcommand{\div}{\nabla\!\cdot}
\newcommand{\proj}[3][0]{\Pi^{#1}_{#2} \ifstrempty{#3}{}{\!\left(#3\right)}}
\newcommand{\sobh}[3][]{\mathrm{H}^{#2}_{#1}\ifstrempty{#3}{}{\left(#3\right)}}
\newcommand{\sobho}[3][1]{\sobh[0\ifstrempty{#3}{}{,#3}]{#1}{#2}} 
\newcommand{\lebl}[2][2]{\mathrm{L}^{#1}\ifstrempty{#2}{}{\left(#2\right)}} 
\newcommand{\sob}[3]{\mathrm{W}^{#1}_{#2}\!\left(#3\right)}
\newcommand{\sobnc}[2]{\mathrm{H}^{1,nc}_{#1}\left(#2\right)}
\newcommand{\monom}[2]{\mathcal{M}_{#1}\left(#2\right)}
\newcommand{\n}[1]{\hat{n}_{#1}} 
\newcommand{\Th}[1][]{\mathcal{T}_{h\ifstrempty{#1}{}{,#1}}}
\newcommand{\Eh}{\mathcal{E}_h}
\newcommand{\Poly}[2]{\mathbb{P}_{#1}\!\left(#2\right)} 
\newcommand{\K}[1][]{\mathsf{K}_{#1}}
\newcommand{\Kmin}[1][]{\mathsf{K}^{\vee}_{#1}}
\newcommand{\eval}[2]{\left.{#1}\right|_{#2}}
\newcommand{\B}[3][]{\ifstrempty{#2#3}{B^{#1}}{B^{#1}\left(#2,#3\right)}}
\newcommand{\F}[2][]{\ifstrempty{#2}{F^{#1}}{F^{#1}\left(#2\right)}}
\newcommand{\Bsupg}[3][]{B_{\mathrm{supg}\ifstrempty{#1}{}{,#1}}\ifstrempty{#2#3}{}{\left(#2,#3\right)}}
\newcommand{\Bhsupg}[3][]{\ifstrempty{#2#3}{B_{\mathrm{supg},h}^{#1}}{B_{\mathrm{supg},h}^{#1}\left(#2,#3\right)}}
\newcommand{\Fsupg}[2][]{F_{\mathrm{supg}\ifstrempty{#1}{}{,#1}}\ifstrempty{#2}{}{\left(#2\right)}}
\newcommand{\Nh}[2]{\mathcal{N}_h\ifstrempty{#1#2}{}{\left(#1,#2\right)}}
\newcommand{\Pe}[1][]{\mathrm{Pe}_{#1}} 
\newtheorem{theorem}{Theorem}
\newtheorem{lemma}{Lemma}
\theoremstyle{remark}
\newtheorem{remark}{Remark}
\newtheorem{assumption}{Assumption}
\theoremstyle{definition}
\title{SUPG stabilization for the nonconforming virtual element method
  for advection-diffusion-reaction equations\tnoteref{miur}}
\author[poli]{S. Berrone}
\ead{stefano.berrone@polito.it}
\author[poli]{A. Borio}
\ead{andrea.borio@polito.it}
\author[lanl,imati]{G. Manzini}
\ead{gmanzini@lanl.com}
\address[poli]{Dipartimento di Scienze Matematiche, Politecnico di
  Torino\\Corso Duca degli Abruzzi 24, Torino, 10129, Italy}
\address[lanl]{T-5 Group, Theoretical Division, Los Alamos National
  Laboratory,\\ Los Alamos, New Mexico, USA} 
\address[imati]{Istituto di
  Matematica Applicata e Tecnologie Informatiche - CNR, \\via Ferrata 1,
  Pavia 27100, Italy} 
\begin{document}
\begin{abstract}
  We present the design, convergence analysis and numerical
  investigations of the nonconforming virtual element
  method with Streamline Upwind/Petrov-Galerkin (VEM-SUPG)
  stabilization for the numerical resolution of
  convection-diffusion-reaction problems in the convective-dominated
  regime.

  According to the virtual discretization approach, the bilinear form
  is split as the sum of a consistency and a stability term.
  The consistency term is given by substituting the functions of the
  virtual space and their gradients with their polynomial projection
  in each term of the bilinear form (including the SUPG stabilization
  term).
  Polynomial projections can be computed exactly from the degrees of
  freedom.
  The stability term is also built from the degrees of freedom by
  ensuring the correct scalability properties with respect to the mesh
  size and the equation coefficients.

  The nonconforming formulation relaxes the continuity conditions at
  cell interfaces and a weaker regularity condition is considered
  involving polynomial moments of the solution jumps at cell
  interface.
  Optimal convergence properties of the method are proved in a
  suitable norm, which includes contribution from the
  advective stabilization terms.
  Experimental results confirm the theoretical convergence rates.
\end{abstract}
\begin{keyword}
  Virtual Element Methods \sep Advection-diffusion-reaction problem
  \sep SUPG \sep stability \sep convergence \MSC 65N30, 65M12
\end{keyword}
\maketitle


\newcommand{\restrict}[2]{{#1}_{|#2}}
\newcommand{\hE}{h_E}
\newcommand{\TERM}[2]{\mathsf{#1}_{#2}}
\newcommand{\FNOTE}[1]{\footnote{\textbf{#1}}}
\newcommand{\CPF}{\mathcal{C}_{P\!F}}
\newcommand{\CNST}[1]{\mathcal{C}_{\mbox{\begin{footnotesize}#1\end{footnotesize}}}\,}

\section{Introduction}
\label{sec:intro}

The virtual element method (VEM) was proposed
in~\cite{BeiraodaVeiga-Brezzi-Cangiani-Manzini-Marini-Russo:2013} as a
variational reformulation of the \emph{nodal} mimetic finite
difference (MFD) method~\cite{%
  BeiraodaVeiga-Manzini-Putti:2015,%
  BeiraodaVeiga-Lipnikov-Manzini:2011,%
  Brezzi-Buffa-Lipnikov:2009,%
  Manzini-Lipnikov-Moulton-Shashkov:2017%
} for solving diffusion problems on unstructured polygonal meshes.
A survey on the MFD method can be found in the review
paper~\cite{Lipnikov-Manzini-Shashkov:2014} and the research
book~\cite{BeiraodaVeiga-Lipnikov-Manzini:2014}.
The VEM inherits the great flexibility of the MFD method with respect
to the admissible meshes, and, despite its introduction dates back to
a few years ago a huge amount of development has taken place, see, for
example, ~\cite{%
  Antonietti-BeiraodaVeiga-Scacchi-Verani:2016,%
  Cangiani-Georgoulis-Pryer-Sutton:2016,%
  BeiraodaVeiga-Chernov-Mascotto-Russo:2016,%
  BeiraodaVeiga-Brezzi-Marini-Russo:2016a,%
  BeiraodaVeiga-Brezzi-Marini-Russo:2016b,%
  BeiraodaVeiga-Brezzi-Marini-Russo:2016c,%
  BeiraodaVeiga-Brezzi-Marini-Russo:2016d,%
  Benedetto-Berrone-Borio-Pieraccini-Scialo:2016b,%
  Berrone-Borio-Scialo:2016,%
  Benedetto-Berrone-Scialo:2016,%
  Berrone-Pieraccini-Scialo:2016,%
  Wriggers-Rust-Reddy:2016,%
  BeiraodaVeiga-Manzini:2015,%
  Natarajan-Bordas-Ooi:2015,%
  Berrone-Pieraccini-Scialo-Vicini:2015,%
  Vacca-BeiraodaVeiga:2015,%
  BeiraodaVeiga-Brezzi-Marini-Russo:2014,%
  BeiraodaVeiga-Brezzi-Marini-Russo:2014b,%
  BeiraodaVeiga-Manzini:2014,%
  Benedetto-Berrone-Pieraccini-Scialo:2014,%
  Brezzi-Marini:2013,%
  Berrone-Benedetto-Borio:2016chapter,%
  Berrone-Borio:2017,%
  Benedetto-Berrone-Borio-Pieraccini-Scialo:2016eccomas%
}.
We emphasize that the VEM is not the only existing way to treat
partial differential equations numerically on unstructured meshes.
Other methods or families of methods that are available from the
literature include 
the polygonal/polyhedral finite element method
(PFEM)~\cite{Wachspress:1975,Sukumar-Tabarraei:2004}, 
the BEM-based FEM~\cite{Hofreither-Langer-Weisser:2016,Weisser:2011},
the finite volume
methods~\cite{Droniou:2014,Droniou-Eymard-Gallouet-Herbin:2010}, 
hybrid high-order (HHO) method~\cite{DiPietro-Ern:2014}, 
the discontinuous Galerkin (DG)
method~\cite{DiPietro-Ern:2011,Cangiani-Georgoulis-Houston:2014}, 
and the hybridized discontinuous Galerkin (HDG)
method~\cite{Cockburn-Gopalakrishnan-Lazarov:2009}. 
Many of these methods are also part of the Gradient Scheme framework
recently proposed
by~\cite{Droniou-Eymard-Herbin:2016,Droniou-Eymard-Herbin:2013}.
Moreover, the connection between VEM and finite elements on
polygonal/polyhedral meshes is touroughly investigated
in~\cite{Manzini-Russo-Sukumar:2014,Cangiani-Manzini-Russo-Sukumar:2015,DiPietro-Droniou-Manzini:2018},
between VEM and BEM-based FEM method
in~\cite{Cangiani-Gyrya-Manzini-Sutton:2017:GBC:chbook}.

The virtual element method is a finite element method, but is dubbed
\emph{virtual} because its formulation does not require the explicit
knowledge of a set of shape functions and gradients of shape functions
to compute the bilinear forms, e.g., mass and stiffness matrices.
The global approximation space is defined over the whole domain by
gluing together local elemental spaces under some regularity
constraint.
Each elemental space is formed by the solutions of a local Poisson
problem with a polynomial right-hand side and nonhomogeneous
polynomial Dirichlet (or Neumann) boundary conditions.
Clearly, a subspace of polynomials up to a given degree always belongs
by construction to each elemental space.
The remarkable fact is that we can compute \emph{exactly} the
projections of the virtual functions and their first derivatives onto
such polynomials by using only the degrees of freedom.
Therefore, a straightforward strategy to approximate the bilinear
forms is to substitute the shape functions and their derivatives in
their arguments with their polynomial projections.
This approach yields the so-called \emph{consistency term}, to which
we add a \emph{stability term} that ensures the nonsingularity of the
resulting discretization.
The stability term is designed to be easily computable from the
degrees of freedom.

The VEM was originally formulated
in~\cite{BeiraodaVeiga-Brezzi-Cangiani-Manzini-Marini-Russo:2013} as a
conforming FEM for the Poisson problem.
It was later extended to convection-reaction-diffusion problems with
variable coefficients
in~\cite{Ahmad-Alsaedi-Brezzi-Marini-Russo:2013,BeiraodaVeiga-Brezzi-Marini-Russo:2016b}.
Meanwhile, the nonconforming formulation for diffusion problems was
proposed in~\cite{AyusodeDios-Lipnikov-Manzini:2016} as the finite
element reformulation of~\cite{Lipnikov-Manzini:2014} and later
extended to general elliptic
problems~\cite{Cangiani-Manzini-Sutton:2016}, Stokes
problem~\cite{Cangiani-Gyrya-Manzini:2016}, and the biharmonic
equation~\cite{Antonietti-Manzini-Verani:2018,Zhao-Chen-Zhang:2016}.
The two major differences between the conforming and nonconforming
formulations are:
\begin{description}
\item $(i)$ at the elemental level the virtual space is formed by the
  solution of a Poisson problem with Neumann boundary conditions;
\item $(ii)$ at the global level we relax the interelement conformity
  requirement, and the definition of the global discrete space just
  relays on some form of weaker regularity according
  to~\cite{Crouzeix-Raviart:1973}.
\end{description}
Nonconforming finite element spaces were historically proposed to
approximate the velocity field of the Stokes equations on triangular
meshes~\cite{Crouzeix-Raviart:1973}.
The functions in these finite element spaces are piecewise polynomials
of degree $k=1$~\cite{Crouzeix-Raviart:1973},
$k=2$~\cite{Fortin-Soulie:1983}, $k=3$~\cite{Crouzeix-Falk:1989}, and
$k>3$~\cite{Stoyan-Baran:2006,Baran-Stoyan:2007,Ainsworth:2005}.
In such formulations, continuity is required only at a discrete set of
special points located at cell interfaces, which are the roots of the
one-dimensional $k^{th}$-order Legendre polynomials defined over each
edge, i.e., the nodes of the Gauss-Legendre quadrature rule of order
$k$.
This minimal continuity requirement ensures the optimal convergence
rate; see, for instance,~\cite{Crouzeix-Raviart:1973}.
Attempts to extend non-conforming finite elements to quadrilaterals,
tetrahedra and hexahedra are found
in~\cite{Rannacher-Turek:1992,Matthies-Tobiska:2005,Matthies:2007}.
A major issue of the nonconforming formulations is that they may
strongly depend on the parity of the underlying polynomial space, the
geometric shape of the element and its spatial dimensionality (2-D or
3-D).
For example, on triangles the nonconforming finite element space for
even $k\geq 2$
in~\cite{Fortin-Soulie:1983,Stoyan-Baran:2006,Baran-Stoyan:2007} must
be enriched by a one-dimensional subspace generated by a bubble
function.
Also, the definition of nonconforming spaces is substantially
different from 2D and 3D and requires a simple geometric shape for the
element (e.g., a simplex, a quadrilateral, or an hexahedral cell), and
also differs from 2D to 3D.
Instead, the nonconforming virtual element space proposed
in~\cite{AyusodeDios-Lipnikov-Manzini:2016} has the same construction
for every $k$ regardless of the parity, the space dimension, and the
elemental geometric shape.

In the case of the convection-dominated regime, a stabilization must
be included in the variational formulation to deal with high
P\'{e}clet number situations.
In finite element approximations, different strategies have been
designed to such purpose, as, for example, 
local projections~\cite{Ganesan-Tobiska:2010}, 
bubble functions~\cite{Brezzi-Franca-Russo:1998,Franca-Tobiska:2002} %
the SUPG method~\cite{%
  Brooks-Hughes:1982,%
  Franca-Frey-Hughes:1992,%
  Gelhard-Lube-Olshanskii-Starcke:2005,%
  Roos-Stynes-Tobiska:2008,%
  Burman-Smith:2011,%
  Burman:2010%
}.
The SUPG stabilization in the conforming virtual element formulation
was previously considered
in~\cite{Benedetto-Berrone-Borio-Pieraccini-Scialo:2016a}.
The main goal of this work is the development of the nonconforming
formulation with SUPG stabilization suitable to solve
convection-dominated transport problems with a moderate reaction term.
In such a situation the SUPG stabilization parameters becomes
dependent on a local P\'{e}clet number and a local Karlovitz-like
number.
In case of a vanishing reaction the SUPG stabilization parameter
converges to its expected classical definition. We prove the
robustness of the method with respect to high P\'eclet numbers when
the problem coefficients are constants and a conforming formulation is
considered, whereas a weak dependence on the P\'eclet number is
observed, due to the non-consistency of the VEM bilinear form
\cite{BeiraodaVeiga-Brezzi-Cangiani-Manzini-Marini-Russo:2013} when
the coefficients are variable or a non conforming formulation is
considered. The presented analysis is also valid for SUPG-stabilized
conforming virtual elements as presented in
\cite{Benedetto-Berrone-Borio-Pieraccini-Scialo:2016a}.

\medskip
The outline of the paper is as follows.
In Section~\ref{sec:problem} we introduce the mathematical model of
the convection-reaction-diffusion problem.
In Section~\ref{sec:vem} we present the nonconforming VEM with the
SUPG stabilization for the convection-dominated regime.
In Section~\ref{sec:estimate} we carry out the convergence analysis
and derive optimal a priori error estimates.
In Section~\ref{sec:numerics} we show the performance of the method on
a set of representative problems.
In Section~\ref{sec:conclusions} we offer our final remarks and
conclusions.

\subsection{Notation}
The notation throughout the paper is as follows:
$\scal{\cdot}{\cdot}$ and $\norm{\EMPTY}$ denote the $\lebl{\Omega}$
scalar product and norm, and
$\scal[\omega]{\cdot}{\cdot}$ and $\norm[\omega]{\EMPTY}$ denote the
$\lebl{\omega}$ scalar product and norm defined on the subdomain
$\omega\subseteq\Omega$;
$\norm[\alpha]{\EMPTY}$ and $\seminorm[\alpha]{\EMPTY}$ denote the
$\sobh{\alpha}{\Omega}$ norm and semi-norm;
$\norm[\alpha,\omega]{\EMPTY}$ and $\seminorm[\alpha,\omega]{\EMPTY}$
denote the $\sobh{\alpha}{\omega}$ norm and semi-norm;
$\norm[\sob{q}{p}{\omega}]{\EMPTY}$ and
$\seminorm[\sob{q}{p}{\omega}]{\EMPTY}$ denote the
$\sob{q}{p}{\omega}$ norm and semi-norm, where $p\geq 1$ is the
Lebesgue regularity index and $q$ is the order of the Sobolev space.
Moreover, $\Poly{k}{\omega}$ denotes the space of polynomial functions
of degree up to the integer number $k\geq 0$ that are defined on the
$d$-dimensional subset $\omega\subseteq\Omega$ with $d=1,2,3$.
If $\Th$ is a partitioning of $\Omega$ in a set of non-overlapping
polytopal elements $E$, i.e., the \emph{mesh}, (for the formal
definition see Section~\ref{sec:mesh}), by $\Poly{k}{\Th}$ we denote
the space of discontinuous functions defined on $\Omega$ whose
restriction to any element $E$ is a polynomial of degree less than or
equal to $k$; hence, $p\in\Poly{k}{\Th}$ iff $p_{|E}\in\Poly{k}{E}$.
Finally, $\sobh{t}{\Th}$ for any $t\geq 1$ is the \emph{broken Sobolev
  space} of globally $\lebl{}$-integrable functions on $\Omega$ whose
restriction to any mesh element $E$ of the mesh $\Th$ belongs to
$\sobh{t}{E}$; formally, we can write that
\begin{align}
  \sobh{t}{\Th} 
  := \Big\{ v\in\lebl{\Omega}\,:\, v_{|E}\in\sobh{t}{E},\,\forall E\in\Th \Big\}.
\end{align}
To ease the notation, since these spaces contain discontinuous
functions, in the following we will intend all norms and seminorms to
be ``broken'' on the mesh.
For example:
\begin{equation*}
  \norm{\nabla v} = 
  \left(
    \sum_{E\in\Th}\norm[E]{\nabla v}^2
  \right)^{\frac12}.
\end{equation*}
Furthermore, we will use the symbol $\mathcal{C}$ to denote a generic
constant independent of the mesh size and the problem data $\K$,
$\beta$ and $\gamma$.
In the estimates this constant may have a different value for each
occurrence.

\section{The variational formulation}
\label{sec:problem}
Let $\Omega\subset\mathbb{R}^d$, $d=2,3$ be a polytopal domain with
boundary $\partial\Omega$ and consider the
convection-diffusion-reaction problem:
\begin{equation}
  \label{eq:problem}
  \begin{array}{rl}
    -\div\left(\K\nabla u\right) +\beta\cdot\nabla u + \gamma u= f &
    \mbox{\text{in $\Omega$}},
    \\
    u=0 & \mbox{\text{on $\partial\Omega$}},
  \end{array}
\end{equation}
We assume that $\K\in\left[\lebl[\infty]{\Omega}\right]^{d\times d}$
is a strongly elliptic and symmetric tensor almost everywhere (a.e.)
on $\Omega$.
Hence, there exist two positive constant $\kappa_*$ and $\kappa^*$
such that
$\kappa_*{\bm\xi}\cdot{\bm\xi}\leq{\bm\xi}\cdot\K(x){\bm\xi}\leq\kappa^*{\bm\xi}\cdot{\bm\xi}$
for every ${\bm\xi}\in\mathbb{R}^d$ and almost every $x\in\Omega$.
We denote $\mathcal{C}_{\kappa}=\kappa^*\slash{\kappa_*}$.
Moreover, we assume that
$\beta\in\left[\lebl[\infty]{\Omega}\right]^d$ with $\div\beta=0$ and
$\gamma\in\lebl[\infty]{\Omega}$ such that
$\inf_{x\in\Omega}\gamma(x)=\gamma_0\geq 0$.
To ease the exposition, we present the virtual element formulation and
the convergence analysis assuming homogeneous Dirichlet boundary
conditions.
However, all the results presented in this paper can readily be
extended to more general situations.

Consider the bilinear form
$\B{}{}\colon \sobho{\Omega}{}\times\sobho{\Omega}{}\to \mathbb{R}$
defined by
\begin{equation}
  \label{eq:defB}
  \B{w}{v} \defeq \scal{\K\nabla w}{\nabla v}
  + \scal{\beta\cdot\nabla w}{v} + \scal{\gamma w}{v}
  \quad \forall w,v \in \sobho{\Omega}{},
\end{equation}
and the linear functional $\F{}\colon \sobho{\Omega}{}\to \mathbb{R}$
defined by
\begin{equation*}
  \label{eq:defF}
  \F{v} \defeq \scal{f}{v}
  \quad \forall v \in \sobho{\Omega}{}.
\end{equation*}
The variational formulation of \eqref{eq:problem} reads as: \emph{Find
  $u\in\sobho{\Omega}{}$ such that}
\begin{equation}
  \label{eq:exvarform}
  \B{u}{v} = \F{v}\quad \forall v\in\sobho{\Omega}{}.
\end{equation}
The bilinear form $\B{}{}$ is coercive and bounded, and the
variational problem~\eqref{eq:exvarform} has a unique solution in view
of the Lax-Milgram lemma.

\section{The virtual element formulation}
\label{sec:vem}

Hereafter, we consider only the case for $d=2$.
However, the nonconforming virtual element formulation is almost the
same for $d=2$ and $3$, the main substantial difference being
necessarily in the mesh assumptions that for $d=3$ must also consider
a star-shaped condition on the faces.
Therefore, most of the results presented in the next sections can
easily be generalized to the three-dimensional case with minor or no
changes at all.

\subsection{General assumptions}
\label{sec:mesh}
Let $\big\{\Th\big\}_{h}$ be a sequence of meshes of $\Omega$, i.e., a
sequence of non-overlapping polygonal partitions of the domain
$\Omega$.
Each $\Th$ is labeled by the subscript $h$, the maximum diameter of
its polygonal elements $E$.
The polygonal elements can have a different number of edges and
hanging node-like configurations are possible with nodes placed on an
edge and forming a flat angle.
We denote the set of all the mesh edges $e$ of the polygonal cells in
$\Th$ by $\Sh$.
We also distinguish between the subset of internal edges $\Sh^{int}$
and the subset of the boundary edges $\Sh^{bnd}$; clearly,
$\Sh=\Sh^{int}\cup\Sh^{bnd}$.

\medskip
We assume that the members of the sequence $\big\{\Th\big\}_{h}$
satisfy the following regularity assumptions:
\emph{There exists a global constant $\rho>0$ such that for each mesh
  $\Th$:}
\begin{enumerate}
\item[$(i)$] every polygon $E\in\Th$ is star-shaped with respect to a
  ball whose radius is greater than or equal to $\rho h_E$, where
  $h_E=\max_{\bx,\by\in E}\norm{\bx-\by}$ is the element diameter;
  \smallskip
\item[$(ii)$] $\forall E\in\Th$, each side $e$ of $E$ is such that
  $h_e\geq \rho h_E$, where $h_e$ is the length of $e$; \smallskip
\end{enumerate}

\begin{remark}
  Assumption $(i)$ implies that each element is simply connected.
  Assumption $(ii)$ implies that the number of sides of each polygon
  of the mesh is uniformly bounded over the mesh sequence.
\end{remark}
The restriction of $\K$ to any element $E\in\Th$ is still a strongly
elliptic tensor and its spectrum can be locally bounded by using two
constants $\Kmin[E]$ and $\K[E]$, so that for any vector-valued field
${\bm\xi}(x)$ defined on $E$ it holds that
\begin{align}
  \Kmin[E]{\bm\xi}(x)\cdot{\bm\xi}(x)
  \leq{\bm\xi}(x)\cdot\K(x){\bm\xi}(x)
  \leq\K[E]{\bm\xi}(x)\cdot{\bm\xi}(x)
  \quad\,\forall x\in E.
  \label{eq:K:local:ellipticity}
\end{align}
We will find convenient for the next theoretical developments to
assume that the inequalities
$0<\kappa_* \leq \Kmin[E] \leq \K[E] \leq \kappa^*$ holds true for
every mesh element $E$.
Since $\K$ is represented by a symmetric and positive definite matrix
we consider the decomposition $\K={\left(\sqrt{\K}\right)}^\intercal \sqrt{\K}$ and we write
$\scal[E]{\K\nabla v}{\nabla v}=\scal[E]{\sqrt{\K}\nabla
  v}{\sqrt{\K}\nabla v}=\norm[E]{\sqrt{\K}\nabla v}^2$ for any
sufficiently regular function $v$.
Therefore, setting ${\bm\xi}=\nabla v$
in~\eqref{eq:K:local:ellipticity} yields
\begin{align*}
  \Kmin[E]\norm[E]{\nabla v}^2\leq\norm[E]{\sqrt{\K}\nabla
  v}^2 \leq\K[E]\norm[E]{\nabla v}^2.
\end{align*}
We will use this relation extensively in the analysis of the next
sections.
For each element $E\in\Th$ we also set
\begin{align*}
  \beta_E &\defeq \sup_{x\in E}\norm[\mathbb{R}^2]{\beta(x)}\,,
  & \gamma_E &\defeq  \norm[\infty,E]{\gamma}.
\end{align*}
Let $k\geq 0$ be an integer number and
$\boldsymbol{\alpha}=(\alpha_1,\alpha_2)$ a two-dimensional
multi-index of order
$\abs{\boldsymbol{\alpha}}=\alpha_1+\alpha_2\leq k$.
The polynomial space $\Poly{k}{E}$ is spanned by the monomials
$m_{\boldsymbol{\alpha}}\in\monom{k}{E}$ defined as
\begin{equation*}
  \label{eq:defmonomials}
  m_{\boldsymbol{\alpha}}(\mathbf{x}) \defeq
  \frac{(\mathbf{x}-\mathbf{x}_E)^{\boldsymbol{\alpha}}}
  {h_E^{\abs{\boldsymbol{\alpha}}}} \quad \forall\mathbf{x}\in E,
\end{equation*}
where $\mathbf{x}_E$ is the center of the ball with respect to which
$E$ is star-shaped.
Similarly, $\Poly{k}{e}$, the space of polynomials of degree $k$
defined on edge $e$, is spanned by the monomials
$m_{\alpha}(\xi):=(\xi-\xi_{e})^{\alpha}\slash{h_e^\alpha}\in\monom{k}{e}$
for $0\leq\alpha\leq k$, where $\xi$ is a local coordinate defined on
$e$, $\xi_{e}$ the midpoint of $e$, and $h_e$ the length of $e$.

In the formulation of the method we will make use of the
\emph{elliptic projection operator}
$\proj[\nabla]{k}{}\colon\sobh{1}{\Th}\rightarrow\Poly{k}{\Th}$, whose
restriction to each element $E$ is the solution of the local problem:
\begin{equation}
  \label{eq:defPinabla}
  \begin{cases}
    \scal[E]{\nabla\proj[\nabla]{k}{}v}{\nabla p} = \scal[E]{\nabla
      v}{\nabla p} & \quad\forall p\in\Poly{k}{E}
    \\[0.5em]
    \scal[\partial E]{\proj[\nabla]{k}{}v}{1} = \scal[\partial
    E]{v}{1} & \quad\text{if $k=1$},
    \\[0.5em]
    \scal[E]{\proj[\nabla]{k}{}v}{1} = \scal[E]{v}{1} & \quad\text{if
      $k>1$}.
  \end{cases}
\end{equation}
We will also consider the $\lebl{}$-\emph{projection operator}
$\proj[0]{l}{}:\sobh{1}{\Th}\rightarrow\Poly{l}{\Th}$ 
whose restriction to each element $E$ is the $\lebl{}$-projection onto
$\Poly{l}{E}$.
A crucial property of these projection operators, which will be
discussed in the next section (see Remark~\ref{remark:projections}),
is that they are computable on the functions of the virtual element
space using only their degrees of freedom.

\subsection{The local nonconforming virtual element space}
\label{subsec:local:VEM:space}
The local nonconforming virtual element space of order $k\geq 1$ is
defined as follows:
\begin{align*}
  V_h^E \defeq \bigg\{ v_h\in \sobh{1}{E}\colon 
  & \Delta
  v_h\in\Poly{k}{E},\,\pder{v_h}{\n{e}} \in\Poly{k-1}{e} \,\forall
  e\subset\partial E,
  \\
  & \scal[E]{v_h}{p}=\scal[E]{\proj[\nabla]{k}{}v_h}{p}\,\forall
  p\in\Poly{k}{E}/ \Poly{k-2}{E} \bigg\},
\end{align*}
where $\Poly{k}{E}/ \Poly{k-2}{E}$ is the subspace of $\Poly{k}{E}$ of
the polynomials that are $\lebl{}$-orthogonal to $\Poly{k-2}{E}$ (or,
alternatively, the polynomials whose degree is exactly $k-1$ and $k$),
and for $k=1$ we conventionally take $\Poly{-1}{E}=\{0\}$.
The definition of $V_h^E$ is based on the \emph{enhancement strategy}
that was introduced in \cite{Ahmad-Alsaedi-Brezzi-Marini-Russo:2013}
for the conforming case and extended to the nonconforming case in
\cite{Cangiani-Manzini-Sutton:2016}.
From the definition above it follows immediately that $\Poly{k}{E}$ is
a linear subspace of $V_h^E$.

\begin{figure}[!t]
  \centering
  \begin{tabular}{cccc}
    \includegraphics[width=0.2\textwidth]{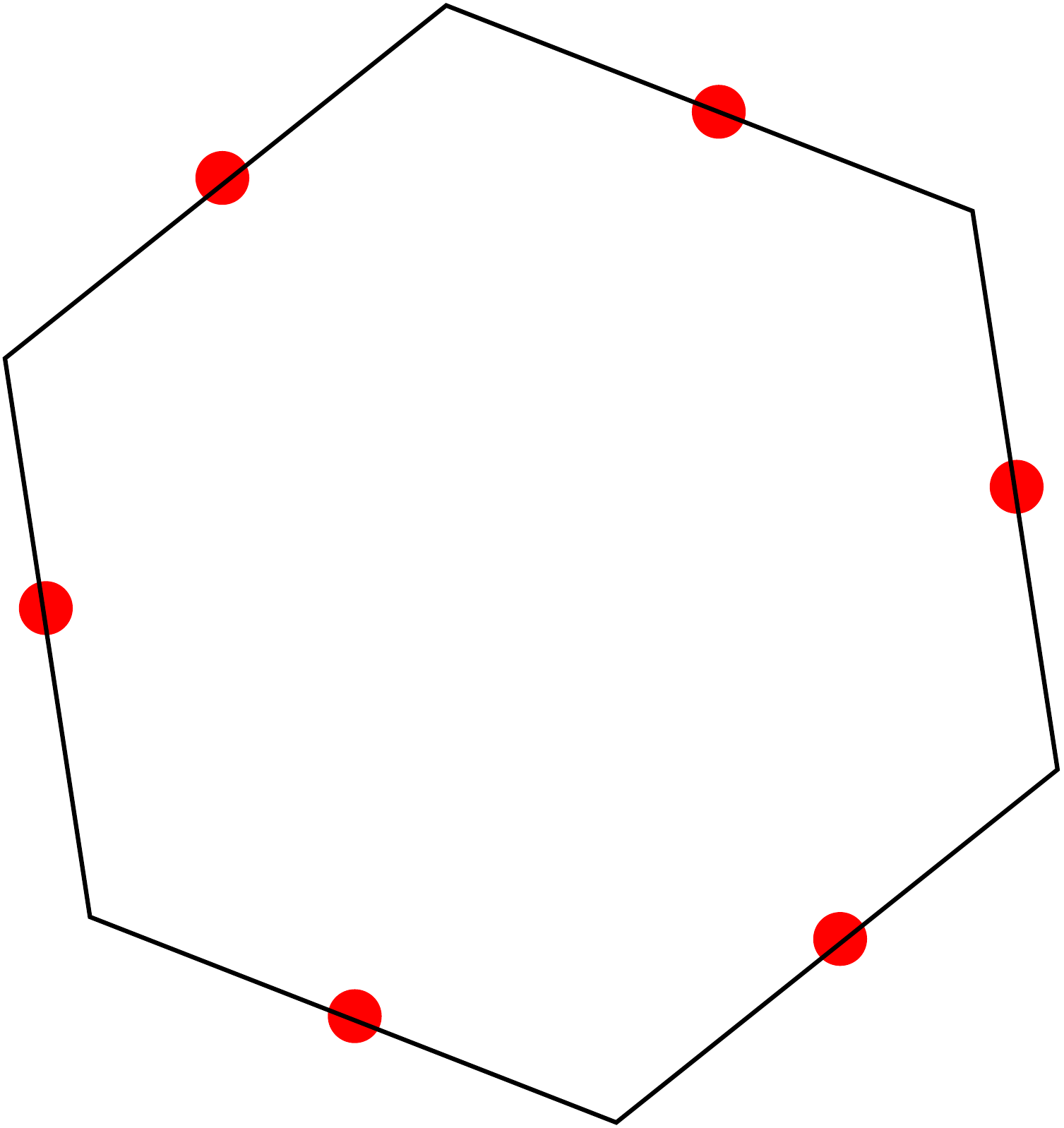} &\quad
    \includegraphics[width=0.2\textwidth]{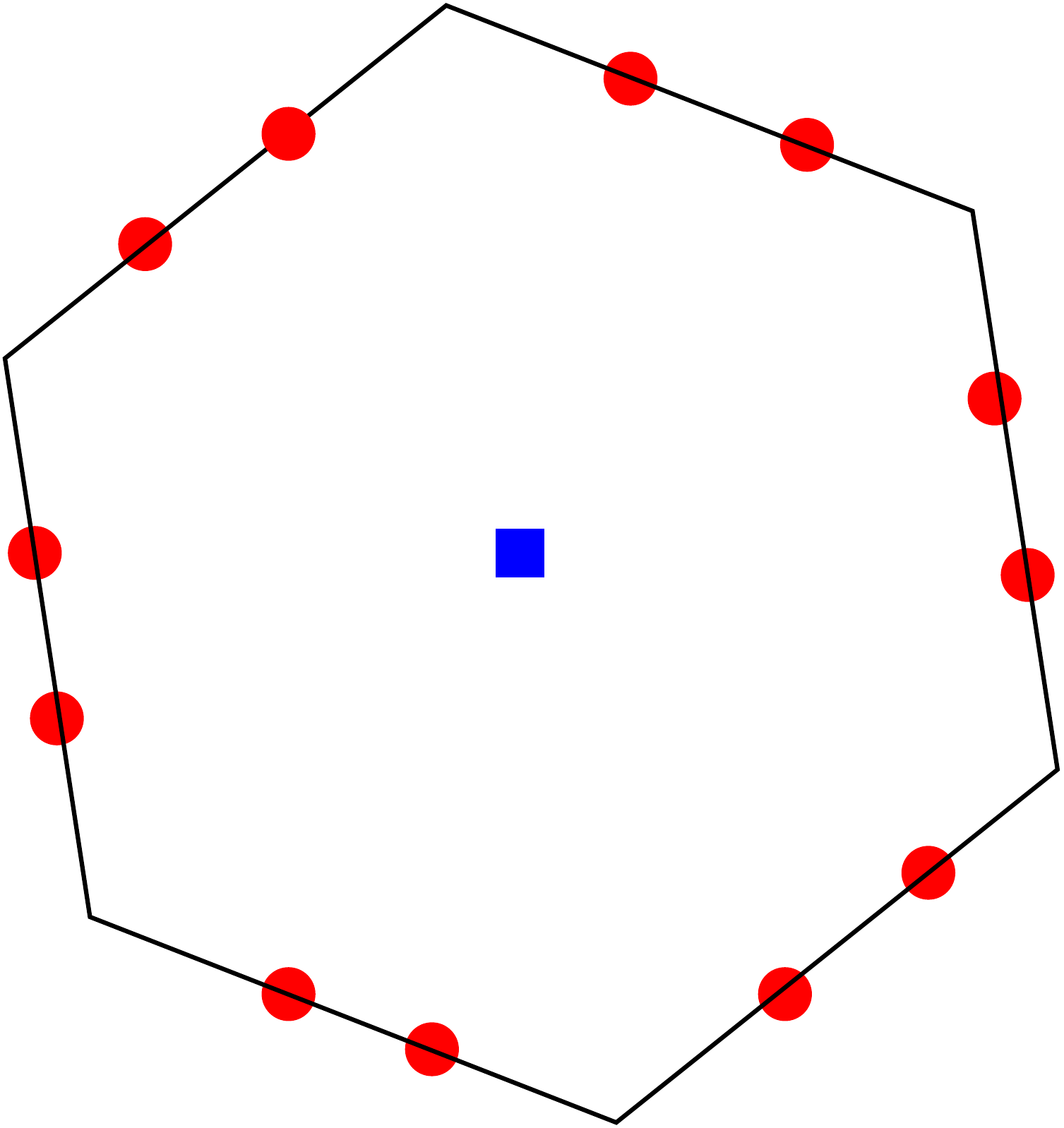} &\quad
    \includegraphics[width=0.2\textwidth]{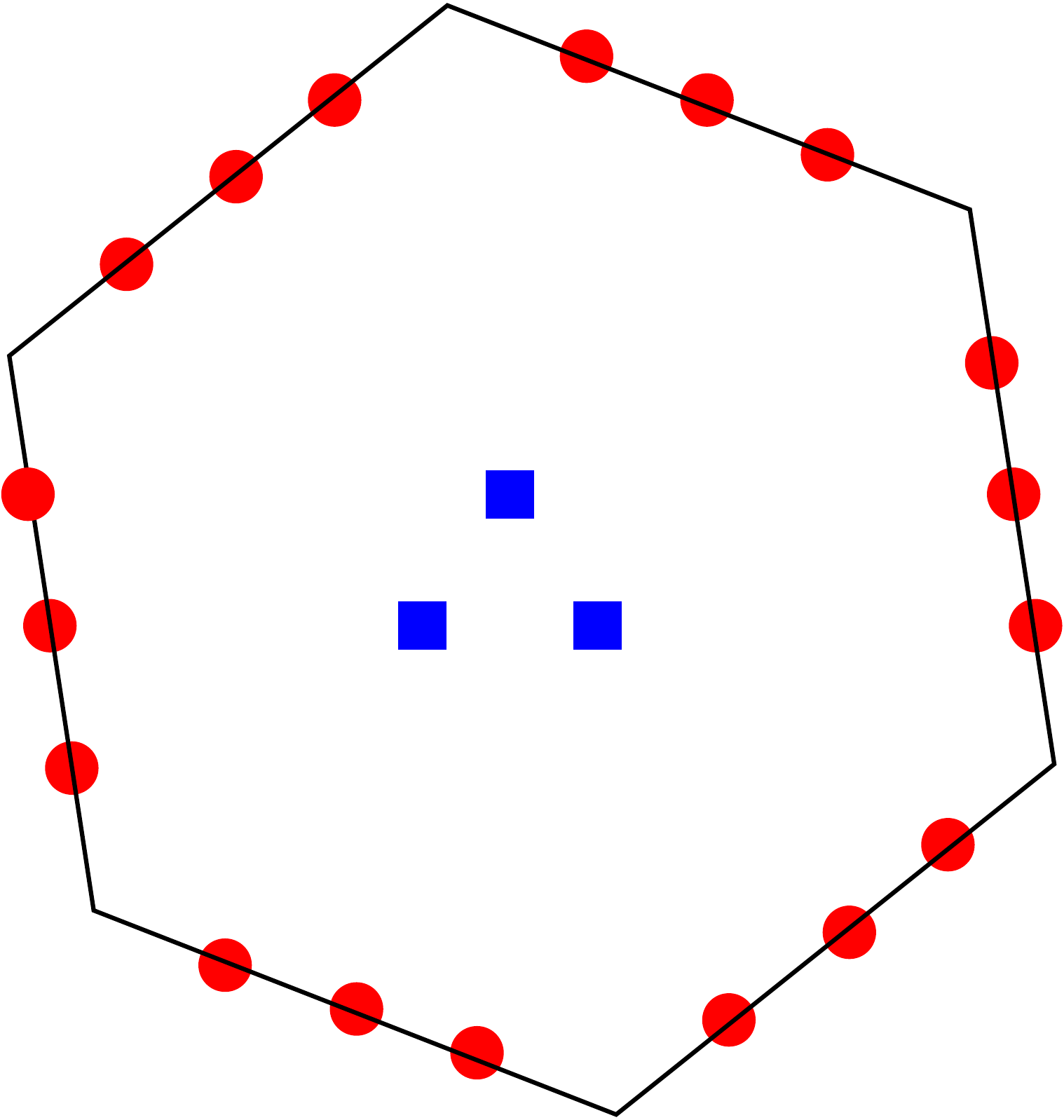} &\quad
    \includegraphics[width=0.2\textwidth]{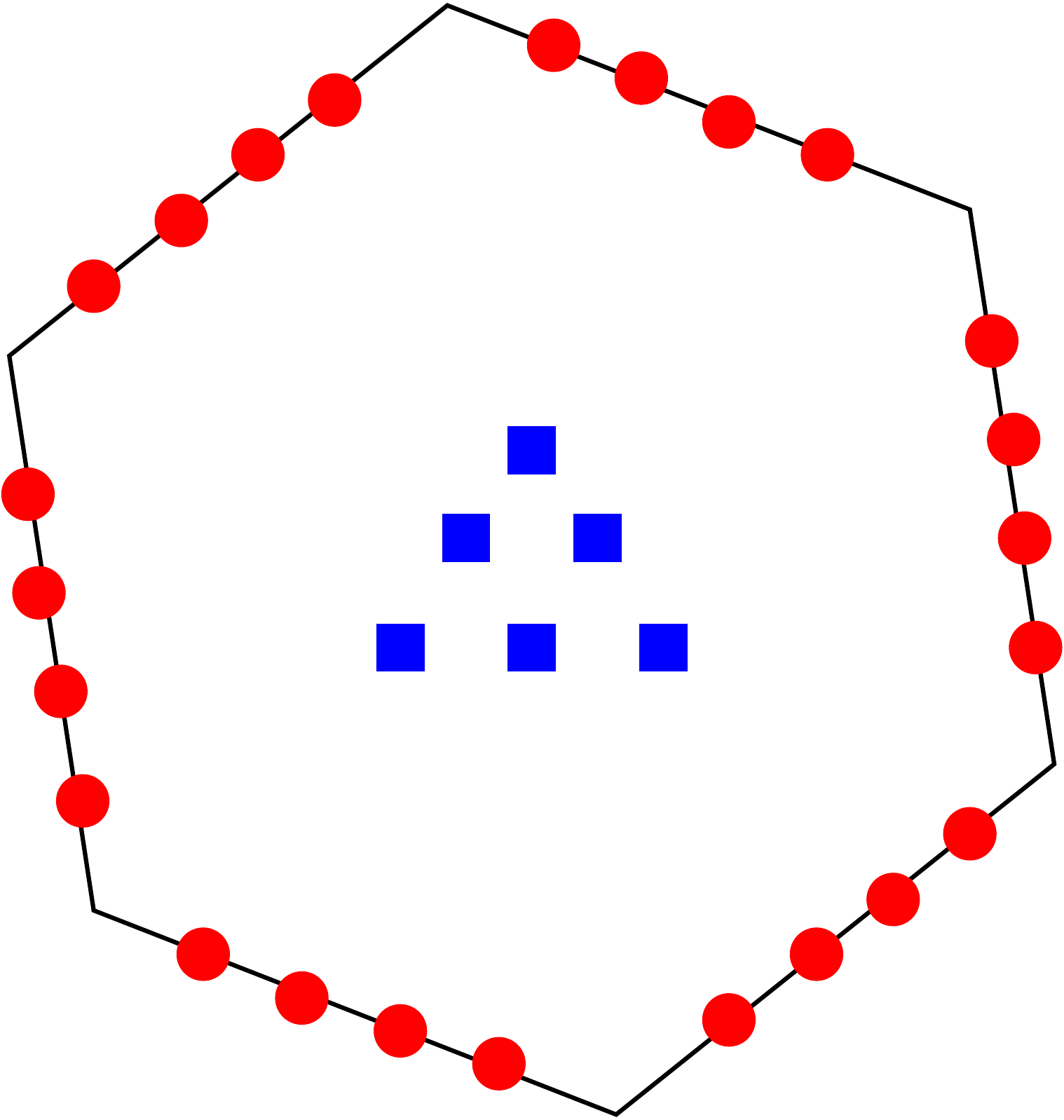} \\
    $\mathbf{k=1}$ & $\mathbf{k=2}$ & $\mathbf{k=3}$ & $\mathbf{k=4}$
  \end{tabular}
  \caption{Degrees of freedom of a hexagonal cell for $k=1,2,3,4$;
    edge moments are marked by a circle; cell moments are marked by a
    square.}
  \label{fig:dofs}
\end{figure}

\medskip
A function $v_h\in V_h^E$ is uniquely identified by the following set
of degrees of freedom:
\begin{itemize}
\item for $k\geq 1$, the moments of $v_h$ of order up to $k-1$ on each mesh interface
  $e$:
  \begin{equation}
    \frac{1}{\abs{e}}\int_{e}v_h m_{\alpha}\dxi
    \qquad\forall m_{\alpha}\in\mathcal{M}_{k-1}(e)\,;
  \end{equation}
\item for $k>1$, the moments of $v_h$ of order up to $k-2$ inside
  element $E$:
  \begin{equation}
    \frac{1}{|E|}\int_{E}v_h m_{\alpha}\dx
    \qquad\forall m_{\alpha}\in\mathcal{M}_{k-2}(E),
  \end{equation}  
\end{itemize}
The unisolvency of these degrees of freedom is proved
in~\cite{AyusodeDios-Lipnikov-Manzini:2016}.
A counting argument shows that the cardinality of this set of degrees
of freedom, which is also the dimension of $V_h^E$, is equal to
$n_E (k-1) + k(k-1)/2$, where $n_E$ is the number of edges of $E$.
The degrees of freedom for an hexagonal cell are shown in
Figure~\ref{fig:dofs}.

\begin{remark}\label{remark:projections}
  The elliptic projection $\proj[\nabla]{k}{}v_h$ is computable from
  the degrees of freedom of $v_h$.
  In fact, an integration by parts of the right-hand side
  of~\eqref{eq:defPinabla} yields:
  \begin{align*}
    \scal[E]{\nabla v_h}{\nabla p} = 
    - \scal[E]{v_h}{\Delta p} 
    + \sum_{e\in\partial E} \scal[e]{v_h}{\bn_e\cdot\nabla p},  
  \end{align*}
  The terms on the right can be expressed by using the $(k-2)$-order
  moments of $v_h$ inside $E$ and the $(k-1)$-order moments of $v_h$
  on each edge $e\in\partial E$ and are thus computable.
  A similar argument shows that also $\proj[0]{k-1}{}v_h$ and
  $\proj[0]{k-1}{}\nabla v_h$ are computable from the degrees of
  freedom of $v_h$.
\end{remark}

\subsection{Global nonconforming virtual element spaces}
\label{subsec:global:VEM:space}
For the construction of the global virtual element spaces we introduce
the nonconforming functional space
\begin{align*}
  \sobnc{k}{\Th}\defeq\bigg\{ v\in\sobh{1}{\Th}\,:\,
  \int_{e}\jump{v}\,q\,\dxi = 0\quad\forall
  q\in\Poly{k-1}{e}\; \forall e\in\Sh
  \bigg\},
\end{align*}
where $\jump{\EMPTY}$ denotes the \emph{jump operator} $\jump{\EMPTY}$
across a mesh interface, which is defined as follows.
If $e$ is an internal edge, we fix a unique unit normal vector $\n{e}$ and we set
$\jump{v}:=v^{+} - v^{-}$, where $v^{\pm}$ are the
traces of $v$ on $e$ from within the two elements $E^{\pm}$ sharing
the edge, being $E^+$ the element for which $\n{e}$ is pointing outward.
If $e$ is a boundary edge, $\n{e}$ is orthogonal to $e$ and
pointing out of  the computational domain $\Omega$ and $\jump{v}:=v^{+}$.

\medskip 
Finally, the \emph{global nonconforming virtual element space of order
  $k$} is defined by
\begin{align}
  V_h := \Big\{
  v_h\in\sobnc{k}{\Th}\,:\,{v_h}_{|E}\in V_h^E\quad\forall E\in\Th
  \Big\}.
\end{align}
Each function $v_h$ of $V_h$ is uniquely characterized by:
\begin{itemize}
\item for $k\geq 1$, the moments of order up to $k-1$ on each internal
  mesh edge $e\in\Sh^{int}$:
  \begin{equation}
    \frac{1}{|e|}\int_{e}v_h m_{\alpha}\dxi
    \qquad\forall m_{\alpha}\in\mathcal{M}_{k-1}(e);
  \end{equation}
\item for $k>1$, the moments of order up to $k-2$ inside each element
  $E\in\Th$:
  \begin{equation}
    \frac{1}{|E|}\int_{E}v_h m_{\alpha}\dx
    \qquad\forall m_{\alpha}\in\mathcal{M}_{k-2}(E).
  \end{equation}
\end{itemize}
The unisolvency of these degrees of freedom in $V_h$ is a direct
consequence of the unisolvency of the local degrees of freedom
introduced in section~\ref{subsec:local:VEM:space} and the definition
of the nonconforming space $\sobnc{k}{\Th}$,
cf.~\cite{AyusodeDios-Lipnikov-Manzini:2016}.

\subsection{SUPG-VEM formulation}
\label{sec:supg}
The discretization of the variational formulation \eqref{eq:exvarform}
may lead to instabilities when the convective term
$\scal{\beta\cdot\nabla w}{v}$ is dominant with respect to the
diffusive term $\scal{\K\nabla w}{\nabla v}$.
Here we consider also a moderate reaction term that we assume not to
be source of instabilities.
In this section we recast the classical \emph{Streamline Upwind Petrov
  Galerkin} (SUPG) approach \cite{Franca-Frey-Hughes:1992} in the
framework of the nonconforming VEM, showing that the optimal order of
convergence can be preserved.
To this end, we assume that
$\K\in\left[\sob{1}{\infty}{\Omega}\right]^{d\times d}$.
Then, we introduce the functional space
\begin{equation}
  \label{eq:defV}
  V \defeq\left\{v\in\sobho{\Omega}{}\colon \Delta v\in\lebl{E}\quad\forall
    E\in\Th\right\},
\end{equation}
the bilinear form
$\Bsupg{}{}\colon V\times \sobh[0]{1}{\Omega}\rightarrow \mathbb{R}$
given by
\begin{equation}
  \label{eq:defBsupg}
  \Bsupg{w}{v}  \defeq \a{w}{v} + \b{w}{v} + \c{w}{v} + \d{w}{v},
\end{equation}
where
\begin{align}
  \label{eq:defa}
  \a{w}{v} & \defeq\sum_{E\in\Th}\scal[E]{\K\nabla w}{\nabla
             v}+\tau_E\scal[E]{\beta\cdot\nabla
             w}{\beta\cdot\nabla v},
  \\
  \label{eq:defb}
  \b{w}{v} & \defeq \frac12\sum_{E\in\Th}
             \Big[
             \scal[E]{\beta\cdot\nabla w}{v}-\scal[E]{w}{\beta\cdot \nabla v}
             \Big],
  \\[0.75em]
  \label{eq:defc}
  \c{w}{v} & \defeq \sum_{E\in\Th}\scal[E]{\gamma w}{v +
             \tau_E \beta \cdot \nabla v},
  \\
  \label{eq:defd}
  \d{w}{v} & \defeq -\sum_{E\in\Th}\tau_E
             \scal[E]{\div(\K\nabla w)}{\beta\cdot\nabla v}.
\end{align}
Furthermore, let
$\Fsupg[]{}\colon\sobh[0]{1}{\Omega}\rightarrow\mathbb{R}$ be the
linear functional given by
\begin{equation}
  \label{eq:defFsupg}
  \Fsupg[]{v} = \scal{f}{v} + \sum_{E\in\Th} 
  \tau_E\scal[E]{f}{\beta\cdot\nabla v}.
\end{equation}
The real positive factor $\tau_E$ is the \emph{local SUPG parameter}
and is discussed in section~\ref{subsec:tau_E}.
The SUPG variational formulation of problem~\eqref{eq:problem} reads
as: \emph{Find $u\in V$ such that}
\begin{equation}
  \label{eq:exvarform:supg}
  \Bsupg{u}{v} = \Fsupg[]{v}\quad \forall v\in \sobh[0]{1}{\Omega}.
\end{equation}
\begin{remark}
  Under the assumptions of Section \ref{sec:problem}, the bilinear
  term $\b{}{}$ in~\eqref{eq:defB}
  that corresponds to the convective flux is equivalent to the
  skew-symmetric term $\b{}{}$ in~\eqref{eq:defb}.
\end{remark}
\begin{remark}
  \label{remark:defa:alt}
  By introducing the matrix
  $\K[\beta,E]=\K+\tau_E\beta\beta^\intercal$, the bilinear form
  $\a{}{}$ in~\eqref{eq:defa} can be reformulated as:
  \begin{equation}
    \label{eq:defa:alt}
    \a{w}{v} \defeq \sum_{E\in\Th} \aE{w}{v} =
    \sum_{E\in\Th} \scal[E]{\K[\beta,E]\nabla w}{\nabla v}.
  \end{equation}
  Since matrix $\K[\beta,E]$ is positive definite, we can use the
  decomposition $\K[\beta,E]=\sqrt{\K[\beta,E]}\sqrt{\K[\beta,E]}$ and
  prove that the bilinear form is continuous, i.e,
  \begin{equation}
    \aE{w}{v}
    \leq \norm[E]{\sqrt{\K[\beta,E]}\nabla w}\,
    \norm[E]{\sqrt{\K[\beta,E]}\nabla v},
    \label{eq:a:continuity}
  \end{equation}
  which holds for every pair of nonconforming functions $v$, $w$.
\end{remark}

\medskip
The SUPG-stabilized virtual element approximation of
\eqref{eq:exvarform} reads as: \emph{Find $u_h\in V_h$ such that}
\begin{equation}
  \label{eq:vemvarform_supg}
  \Bsupg[h]{u_h}{v_h} = \Fsupg[h]{v_h} \quad \forall v_h\in V_h,
\end{equation}
where the bilinear form
$\Bsupg[h]{}{}\colon V_h\times V_h\rightarrow\mathbb{R}$ and the
right-hand side $\Fsupg[h]{} \colon V_h \rightarrow \mathbb{R}$ are
the virtual element approximation of $\Bsupg[]{}{}$ and $\Fsupg[]{}$,
respectively.
The bilinear form $\Bsupg[h]{}{}$ is given by
\begin{equation}
  \label{eq:defBhsupg}
  \Bhsupg{w_h}{v_h} \defeq \a[h]{w_h}{v_h} + \b[h]{w_h}{v_h} +
  \c[h]{w_h}{v_h} + \d[h]{w_h}{v_h} 
\end{equation}
for any $w_h,v_h\in V_h$, where
\begin{align}
  \a[h]{w_h}{v_h} 
  & \defeq
    \sum_{E\in\Th}\Big(\scal[E]{\K\Pi_{k-1}^0\nabla w_h}{\Pi_{k-1}^0\nabla v_h}
    + \tau_E \scal[E]{\beta\cdot\Pi_{k-1}^0\nabla w_h}{\beta\cdot\Pi_{k-1}^0\nabla v_h}
    \nonumber\\[-0.5em]
  &\phantom{\quad\defeq\sum_{E\in\Th}\Big(}
    + \shE{\left(I-\proj{k-1}{}\right)w_h}{\left(I-\proj{k-1}{}\right)v_h}\Big),
    \label{eq:defah}
  \\[0.25em]
  \b[h]{w_h}{v_h} 
  & \defeq \sum_{E\in\Th}
    \frac12\Big(
    \scal[E]{\beta\cdot\proj{k-1}{}\nabla w_h}
    {\proj{k-1}{}v_h}-\scal[E]{\proj{k-1}{}w_h}
    {\beta\cdot\proj{k-1}{}\nabla v_h}
    \Big),
    \label{eq:defbh}
  \\[0.5em]
  \c[h]{w_h}{v_h} 
  & \defeq \sum_{E\in\Th}
    \scal[E]{\gamma\Pi^0_{k-1}w_h}{\Pi^0_{k-1}v_h + \tau_E\beta\cdot\Pi^0_{k-1}\nabla v_h},
    \label{eq:defch}
  \\[0.5em]
  \d[h]{w_h}{v_h} 
  & \defeq -\sum_{E\in\Th}\tau_E
    \scal[E]{\div\big(\K\Pi_{k-1}^0\nabla w_h\big)}{\beta\cdot\Pi_{k-1}^0\nabla v_h},
    \label{eq:defdh}
\end{align}
the local VEM stabilization term in $\a[h]{w_h}{v_h}$ is given by
\begin{equation}
  \shE{\left(I-\proj{k-1}{}\right)w_h}{\left(I-\proj{k-1}{}\right)v_h}
  \defeq
  (\K[E]+\tau_E\beta_E^2)
  \vemstabAST[E]{\left(I-\proj{k-1}{}\right)w_h}
  {\left(I-\proj{k-1}{}\right)v_h},
  \label{eq:local:SUPG:stabterm}
\end{equation}
where
$\vemstabAST[E]{\left(I-\proj{k-1}{}\right)v_h}{\left(I-\proj{k-1}{}\right)v_h}$
is such that there exist two constants $\sigma^\ast,\,\sigma_\ast>0$,
independent of $h$ and the problem parameters satisfying,
$\forall v_h\in V_h $,
\begin{equation}
  \label{eq:SASTequivalence}
  \begin{split}
    \sigma_\ast \norm[E]{\nabla \left(I-\proj{k-1}{}\right) v_h}^2 \leq
    \vemstabAST[E]{\left(I-\proj{k-1}{}\right)v_h}
    {\left(I-\proj{k-1}{}\right)v_h} \leq \sigma^\ast
    \norm[E]{\nabla \left(I-\proj{k-1}{}\right) v_h}^2 \,.
  \end{split}
\end{equation}
Moreover, the linear functional $\Fsupg[h]{v_h}$ is given by
\begin{equation}
  \label{eq:defFsupgh}
  \Fsupg[h]{v_h} = \scal{f}{\Pi_{k-1}^0 v_h} + \sum_{E\in\Th} 
  \tau_E\scal[E]{f}{\beta\cdot\Pi^0_{k-1}\nabla v_h},
\end{equation}
for any $v_h\in V_h$.
In view of Remark~\ref{remark:defa:alt}, we can define the local
bilinear form
\begin{align*}
  \ahE{w_h}{v_h} = \scal[E]{\K[\beta,E]\Pi_{k-1}^0\nabla w_h}{\Pi_{k-1}^0\nabla v_h} 
\end{align*}
such that
\begin{align*}
  \a[h]{w_h}{v_h} = \sum_{E\in\Th}\left(\ahE{w_h}{v_h}
  +\shE{\left(I-\proj{k-1}{} \right) w_h}
  {\left(I-\proj{k-1}{}\right) v_h}\right) \,.
\end{align*}
Notice that, by \eqref{eq:a:continuity},
\eqref{eq:local:SUPG:stabterm} and \eqref{eq:SASTequivalence}, the
stabilization term
$\vemstab[E]{}{}\colon V_h\times V_h\rightarrow\mathbb{R}$ satisfies
\begin{equation}
  \label{eq:vemstab_coercivity}
  \begin{split}
    \vemstab[E]{\left(I-\proj{k-1}{}\right)
      v_h}{\left(I-\proj{k-1}{}\right) v_h} &\geq \sigma_\ast \left(
      \K[E] + \tau_E\beta_E^2 \right) \norm[E]{\nabla v_h - \nabla
      \proj{k-1}{} v_h}^2
    \\
    &\geq \sigma_\ast \left( \K[E] + \tau_E\beta_E^2 \right)
    \norm[E]{\nabla v_h - \proj{k-1}{} \nabla v_h}^2
    \\
    &\geq \sigma_\ast \norm[E]{\sqrt{\K[\beta,E]}\left(\nabla v_h -
        \proj{k-1}{} \nabla v_h\right)}^2 \,,
  \end{split}
\end{equation}
being
$\norm[E]{\nabla v_h - \proj{k-1}{}\nabla v_h} \leq \norm[E]{\nabla
  v_h - \nabla \proj{k-1}{} v_h}$.
According to~\cite{Cangiani-Manzini-Sutton:2016}, a possible choice
for $\vemstabAST[E]{}{}$ is given by
\begin{equation}
  \label{eq:defSE}
  \vemstabAST[E]{\left(I-\proj{k-1}{}\right)w_h}
  {\left(I-\proj{k-1}{}\right)v_h} = \sum_{i=1}^{N_E}
  \chi_i\left(\left(I-\proj{k-1}{}\right)w_h\right)
  \chi_i \left(\left(I-\proj{k-1}{}\right)v_h\right),
\end{equation}
where $N_E$ is the number of degrees of freedom on the element $E$ and
$\chi_i$ is the operator that selects the $i$-th degree of freedom.

The effect of the SUPG stabilization in the VEM stabilization is
reflected by the term $\tau_E\beta_E^2$ that appears in the local
coefficient multiplying $\vemstab[E]{}{}$ in
definition~\eqref{eq:local:SUPG:stabterm}.

\subsection{The SUPG parameter $\tau_E$}
\label{subsec:tau_E}

According to~\cite{Franca-Frey-Hughes:1992,
  Benedetto-Berrone-Borio-Pieraccini-Scialo:2016a}, the stability
parameter $\tau_E$ when there is no reaction term is defined by
\begin{equation}
  \tau_E =\frac{h_E}{2\beta_E}\min\left\{\Pe[E],1\right\},
  \qquad{\rm where}\qquad
  \label{eq:defPe}
  \Pe[E] \defeq m_k^E\frac{\beta_E h_E}{2 \K[E] }, 
\end{equation}
and
\begin{gather}
  \notag \label{eq:defmk} m_k^E \defeq
  \begin{cases}
    \frac13 & \text{if $\div\left(\K\nabla v_h\right) = 0$
      $\forall v_h\in V_h^E$,}
    \\
    2\tilde{C}_k^E & \text{otherwise,}
  \end{cases}
\end{gather}
is the \emph{mesh P\'{e}clet number} of $E$; $\tilde{C}^E_k$ is the
biggest constant number satisfying the following inverse inequality:
\begin{equation}
  \label{eq:estimnormlapl}
  \tilde{C}^E_k h_E^2\norm[E]{\div(\K\nabla v_h)}^2 \le
  \norm[E]{\K\nabla v_h}^2 \quad \forall v_h\in V_h^E.
\end{equation}
A proof of such a local inverse inequality for the virtual element
space $V_h^E$ is provided in
\cite[Lemma~10]{Cangiani-Georgoulis-Pryer-Sutton:2016} for constant
$\K$ and any function with polynomial laplacian under the current mesh
regularity assumptions.
For a nonconstant $\K$, using standard manipulations we obtain
\eqref{eq:estimnormlapl} with a constant $\tilde{C}_k^E$ that may
depend on the variations of $\K$ on the element.

In~\cite{Franca-Frey-Hughes:1992,
  Benedetto-Berrone-Borio-Pieraccini-Scialo:2016a} no reaction term
was considered.
Since here we have such a term we need to modify the definition of
$\tau_E$ by adding a constraint that guarantees the coercivity of
$\Bsupg{}{}$ and $\Bsupg[h]{}{}$.
As in the proof of Lemma \ref{lem:coercivity_Bh} we need that
$\frac{1}{2}-\frac{\gamma_E\tau_E}{2}\geq C>0$, we may assume that
there exists a constant $C_\tau \in (0,1)$ such that
\begin{equation}
  \label{eq:deftau}
  \tau_E \defeq \min\left\{
    \frac{\tilde{C}^E_k h^2_E}{\K[E]}, 
    \frac{h_E}{2\beta_E},
    \frac{C_\tau}{\gamma_E}\right\}.
\end{equation}

Finally, we introduce the local \emph{Karlovitz number}, i.e., the
dimensionless parameter associated with each mesh element $E$,
\begin{equation*}
  \mathrm{Ka}_E \defeq \frac{2\beta_E C_\tau}{h_E\gamma_E},
\end{equation*}
and we redefine $\tau_E$ as
\begin{equation*}
  \tau_E = \frac{h_E}{2\beta_E}\min\left\{\Pe[E],1,\mathrm{Ka}_E\right\}.
\end{equation*}
The comparison between $\Pe[E]$ and $\mathrm{Ka}_E$ determines whether
the value of $\tau_E$ is dominated by the convective term, the
diffusive term, or the reactive term.
The two curves in Figure \ref{fig:tauregimes} show the behaviour of
$\tau_E$ for two possible choices of the problem coefficients.
The curves are parametrized by the diameter $h_E$ with decreasing
values from left to right along each curve.
We see that for small values of $h_E$ (right-most part of each curve),
$\tau$ falls in the diffusive regime, possibly passing through the
convective regime, as expected.
\begin{figure}
  \centering
  \includegraphics[width=.5\textwidth]{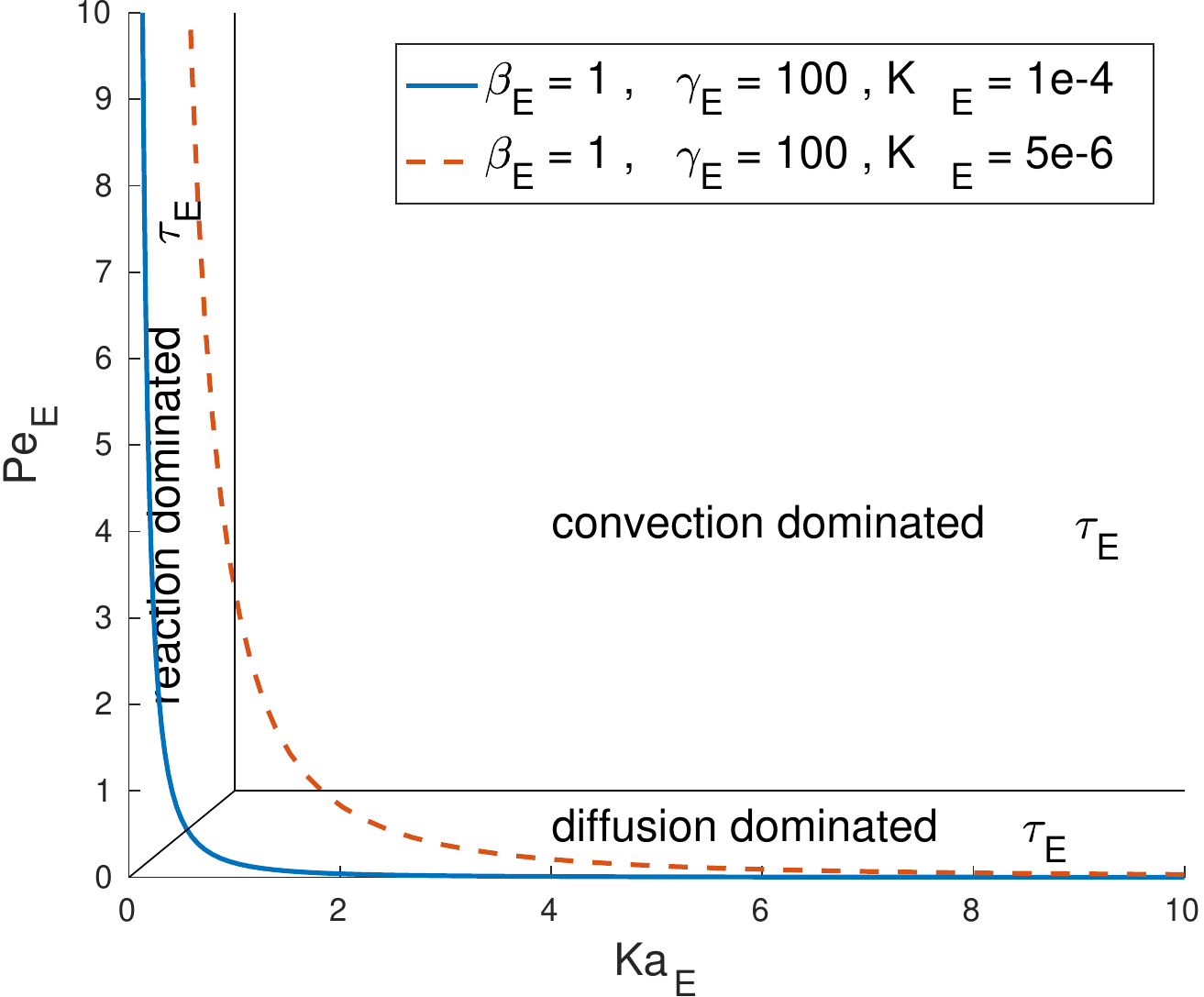}
  \caption{Different regimes of $\tau_E$ for different values of
    $\Pe[E]$ and $\mathrm{Ka}_E$.  }
  \label{fig:tauregimes}
\end{figure}

\section{Error Analysis}
\label{sec:estimate}

In the following we assume that the problem is well written in the
non-dimensional way, and consequently $\K[E]\leq 1$, $h_E\leq 1$,
$\beta_E=O(1)$, $\gamma_E \leq O(1)$.
Let $h\defeq \max_{E\in\Th}h_E$ and define the following norms:
\begin{align}
  \label{eq:defennormKbeta}
  \ennorm[\K\beta]{v} &\defeq \left( \a{v}{v} + \a[h]{v}{v} \right)^{\frac12} \,,
  \\
  \label{eq:defennormKbetagamma}
  \ennorm[\K\beta\gamma]{v} &\defeq \left(
                              \ennorm[\K\beta]{v}^2
                              + \norm{\sqrt{\gamma} \proj{k-1}{} v}^2
                              \right)^{\frac12} \,,
\end{align}
on the nonconforming space $\sobnc{k}{\Th}$. In
\eqref{eq:defennormKbeta}, for the evaluation of $\vemstab[E]{v}{v}$,
we assume to use the VEM interpolant of the function $v$, see
\cite[Theorem 11]{Cangiani-Georgoulis-Pryer-Sutton:2016}.
Clearly, $\ennorm[\K\beta]{v}\leq\ennorm[\K\beta\gamma]{v}$.

\subsection{Discretization errors}
\label{sec:discretization_errors}
The following Lemmas~\ref{lem:estim_a-ah}, \ref{lem:estim_b-bh},
\ref{lem:estim_c-ch}, and~\ref{lem:estim_d-dh} provide a continuity
bound for the discrete bilinear
forms~\eqref{eq:defah}-\eqref{eq:defdh} and an estimate of the
approximation error when compared with the corresponding continuous
ones~\eqref{eq:defa}-\eqref{eq:defd}.
Throughout the section, we use the approximation results for the local
polynomial projections of a function $v\in\sobh{s+1}{E}$,
cf. \cite[Lemma 5.1]{BeiraodaVeiga-Brezzi-Marini-Russo:2015}, given
by:
\begin{align}
  \label{eq:estimP0k-1}
  \norm[E]{v-\proj{k-1}{}v} + h_E\seminorm[1,E]{v-\proj{k-1}{}v} 
  & \leq\mathcal{C} h_E^{s+1}\seminorm[s+1,E]{v}\quad
    1\leq s+1\leq k,
  \\[0.25em]
  \label{eq:estimPnablak}
  \norm[E]{v-\proj[\nabla]{k}{}v} + h_E\seminorm[1,E]{v-\proj[\nabla]{k}{}v} 
  & \leq \mathcal{C}h_E^{s+1}\seminorm[s+1,E]{v} \quad
    1\leq s+1\leq k+1,
\end{align}
which hold for every mesh element $E$ and polynomial degree $k\geq 1$
under the mesh assumptions of Section~\ref{sec:mesh}.
For every internal edge $e=\partial E^+\cap\partial E^-$ and functions
$v\in\sobh{s+1}{\omega_e}$ with $\omega_e=E^+\cup E^-$ we will also
consider the trace inequality
\begin{align}
  \label{eq:estim:trace:Pnablak}
  \norm[e]{v-\proj[0,e]{k-1}v}+h_{e}\seminorm[1,e]{v-\proj[0,e]{k-1}v} 
  & \leq\mathcal{C}h_{e}^{s+\frac{1}{2}}\seminorm[s+1,\omega_e]{v}\quad
    1\leq s+1\leq k \,.
\end{align}

We use the error estimate for the virtual element interpolant of order
$k$ of a function $\varphi\in\sobh{s+1}{E}$, $1 \leq s+1\leq k+1$
\cite{AyusodeDios-Lipnikov-Manzini:2016}:
\begin{equation}
  \label{eq:estim_veminterp}
  \norm[E]{\varphi-\varphi_I} +
  \hE\seminorm[1,E]{\varphi-\varphi_I} 
  \leq\mathcal{C} 
  \hE^{s+1} \seminorm[s+1,E]{\varphi}.
\end{equation}
Furthermore, \eqref{eq:estim_veminterp} implies that
\begin{equation}
  \label{eq:H1:apriori:proof:00}
  \begin{split}
    \ennorm[\K\beta\gamma]{\varphi-\varphi_I}^2 & = \sum_{E\in\Th}
    \left( \norm[E]{\sqrt{\K}\nabla(\varphi-\varphi_I)}^2 +
      \tau_E\norm[E]{\beta\cdot\nabla(\varphi-\varphi_I)}^2 + \right.
    \\
    & \quad + \norm[E]{\sqrt{\K} \proj{k-1}{
        \nabla(\varphi-\varphi_I)}}^2 + \tau_E\norm[E]{\beta\cdot
      \proj{k-1}{\nabla(\varphi-\varphi_I)}}^2
    \\
    & \left. \quad + \shE{\left(I-\proj{k-1}{}\right) \left(
          \varphi-\varphi_I \right)}{\left(I-\proj{k-1}{}\right)
        \left( \varphi-\varphi_I \right)} \right.
    \\
    &\left. \quad+ \norm[E]{\sqrt{\gamma}
        \proj{k-1}{\varphi-\varphi_I}}^2\right)
    \\
    &\leq \mathcal{C} \sum_{E\in\Th}\max \left\{\K[E] \,,
      \tau_E\beta_E^2 \,, h_E^2\gamma_E \right\} h^{2s}_E
    \seminorm[s+1,E]{\varphi}^2,
  \end{split}
\end{equation}
for some positive constant $\mathcal{C}$ independent of $h$ and the
local problem coefficients $\K$, $\beta$, $\gamma$.

\begin{assumption}
  We assume that the solution $u$ to \eqref{eq:exvarform:supg} belongs
  to $\sobh{s+1}{\Th}\cap V$, with $1 < s+1 \leq k+1$ and that
  $\K\in\big[\sob{s}{\infty}{\Omega}\big]^{d\times d}$,
  $\beta\in\left[\sob{s+1}{\infty}{\Omega}\right]^{d}$,
  $\gamma\in\left[\sob{s+1}{\infty}{\Omega}\right]$.
\end{assumption}

The following technical lemma is needed in the upcoming proofs.
\begin{lemma}
  \label{lem:estim_ab-Pab}
  Let $a,b \in \sob{s}{\infty}{E}$ be given, $E\in\Th$.  Then,
  \begin{equation}
    \label{eq:estim_ab-Pab}
    \norm[\sob{s}{\infty}{E}]{ab - \proj{0}{ab}} \leq \frac32 \left(
      \norm[\sob{s}{\infty}{E}]{a} \norm[\sob{s}{\infty}{E}]{b - \proj{0}{}b} +
      \norm[\sob{s}{\infty}{E}]{b} \norm[\sob{s}{\infty}{E}]{a - \proj{0}{}a}
    \right) \,.
  \end{equation}
\end{lemma}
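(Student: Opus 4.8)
The plan is to exploit that $\proj{0}{ab}$ is simply the elementwise mean of the product, hence a constant on $E$, and to compare $ab$ not against this mean directly but against the product of the two individual means $\proj{0}{}a\cdot\proj{0}{}b$. The starting point is the telescoping identity
\[
  ab - \proj{0}{ab} = a\big(b-\proj{0}{}b\big) + \big(\proj{0}{}b\big)\big(a-\proj{0}{}a\big) + \big(\proj{0}{}a\,\proj{0}{}b - \proj{0}{ab}\big),
\]
which one verifies by observing that the first two summands add up to $ab-\proj{0}{}a\,\proj{0}{}b$. I denote these three summands by $T_1$, $T_2$ and $T_3$.

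For $T_1$ and $T_2$ I would take the $\norm[\sob{s}{\infty}{E}]{\cdot}$ norm, invoke the product estimate $\norm[\sob{s}{\infty}{E}]{fg}\le\norm[\sob{s}{\infty}{E}]{f}\,\norm[\sob{s}{\infty}{E}]{g}$, and control the mean values through $\abs{\proj{0}{}a}\le\norm[\infty,E]{a}\le\norm[\sob{s}{\infty}{E}]{a}$ (and likewise for $b$). This produces $\norm[\sob{s}{\infty}{E}]{T_1}\le\norm[\sob{s}{\infty}{E}]{a}\,\norm[\sob{s}{\infty}{E}]{b-\proj{0}{}b}$ and $\norm[\sob{s}{\infty}{E}]{T_2}\le\norm[\sob{s}{\infty}{E}]{b}\,\norm[\sob{s}{\infty}{E}]{a-\proj{0}{}a}$, i.e. exactly the two terms on the right-hand side, each carrying coefficient $1$.

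The crux is $T_3$, the residual constant. Since it is constant on $E$ its $\sob{s}{\infty}{E}$ norm reduces to its absolute value, and the identity $\proj{0}{}a\,\proj{0}{}b-\proj{0}{ab}=-\proj{0}{}\big[(a-\proj{0}{}a)(b-\proj{0}{}b)\big]$ bounds it by $\norm[\infty,E]{(a-\proj{0}{}a)(b-\proj{0}{}b)}\le\norm[\infty,E]{a-\proj{0}{}a}\,\norm[\infty,E]{b-\proj{0}{}b}$. To reproduce the symmetric right-hand side I would estimate this product in two ways — once bounding the oscillation of $a$ by the full norm $\norm[\sob{s}{\infty}{E}]{a}$ while keeping the oscillation of $b$, once with the roles reversed — and then average the two bounds. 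This yields $\abs{T_3}\le\tfrac12\big(\norm[\sob{s}{\infty}{E}]{a}\,\norm[\sob{s}{\infty}{E}]{b-\proj{0}{}b}+\norm[\sob{s}{\infty}{E}]{b}\,\norm[\sob{s}{\infty}{E}]{a-\proj{0}{}a}\big)$, and summing $T_1$, $T_2$, $T_3$ gives the coefficient $1+\tfrac12=\tfrac32$.

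I expect the sharp treatment of $T_3$ to be the main obstacle: a naive bound leaves the coefficient at $2$, and the improvement to $\tfrac32$ hinges precisely on recognizing that the constant correction depends only on the oscillations $a-\proj{0}{}a$, $b-\proj{0}{}b$, and on symmetrizing its estimate. I would also be careful that the bound $\norm[\infty,E]{a-\proj{0}{}a}\le\norm[\sob{s}{\infty}{E}]{a}$ used in the symmetrization is legitimate; for $s\ge 1$ it follows from an oscillation (Poincaré-type) estimate together with the normalization $h_E\le 1$ adopted in Section~\ref{sec:estimate}. The product estimate in $\sob{s}{\infty}{E}$ and the elementary bound $\abs{\proj{0}{}f}\le\norm[\infty,E]{f}$ are standard, so they do not affect the structure of the argument.
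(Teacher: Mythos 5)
Your decomposition is sound, and your treatment of $T_1$ and $T_2$ coincides with the paper's: the paper symmetrizes first, writing $ab-\proj{0}{ab}$ as one half of a six-term sum (using $\proj{0}{}a\,\proj{0}{}b=\proj{0}{a\proj{0}{}b}=\proj{0}{b\proj{0}{}a}$), but after grouping, four of its six terms are exactly your $T_1$, $T_2$ and their mirror images, bounded with the same ingredients ($\abs{\proj{0}{}f}\leq\norm[\infty,E]{f}\leq\norm[\sob{s}{\infty}{E}]{f}$ and the product estimate). The genuine gap is your symmetrization of $T_3$: the inequality $\norm[\infty,E]{a-\proj{0}{}a}\leq\norm[\sob{s}{\infty}{E}]{a}$ does not hold as stated with constant $1$. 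In $L^\infty$ alone the sharp constant is $2$ (take $a$ equal to $1$ on most of $E$ with a thin excursion down to $-1$, so $\proj{0}{}a\approx 1$ while $\norm[\infty,E]{a}=1$ and $\norm[\infty,E]{a-\proj{0}{}a}\approx 2$), and your proposed repair via a Poincar\'e-type oscillation estimate with $h_E\leq 1$ yields constant $\leq 1$ only on convex cells, where $\abs{a(x)-a(y)}\leq h_E\norm[\infty,E]{\nabla a}$. The mesh assumptions of Section~\ref{sec:mesh} admit merely star-shaped elements (the experiments even use non-convex cells), for which the mean-value path must pass through the star center, so the oscillation bound carries a geometry-dependent factor (up to $2h_E$, plus whatever convention-dependent factor relates $\norm[\infty,E]{\nabla a}$ to $\norm[\sob{s}{\infty}{E}]{a}$). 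With only the safe bounds your argument delivers the constant $2$ you yourself mention as the naive outcome, not the $\tfrac32$ the lemma asserts.

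The fix is one line, and it is precisely the device in the paper's proof: do not pass to the product of the two oscillations. Since $\proj{0}{\left(a-\proj{0}{}a\right)\proj{0}{}b}=\proj{0}{}b\;\proj{0}{a-\proj{0}{}a}=0$, you have the two representations
\begin{equation*}
  T_3 \;=\; -\proj{0}{\left(a-\proj{0}{}a\right)b}
        \;=\; -\proj{0}{a\left(b-\proj{0}{}b\right)},
\end{equation*}
and, $T_3$ being constant,
$\abs{T_3}\leq\norm[\infty,E]{\left(a-\proj{0}{}a\right)b}
\leq\norm[\sob{s}{\infty}{E}]{a-\proj{0}{}a}\,\norm[\sob{s}{\infty}{E}]{b}$,
with the symmetric bound
$\abs{T_3}\leq\norm[\sob{s}{\infty}{E}]{a}\,\norm[\sob{s}{\infty}{E}]{b-\proj{0}{}b}$;
averaging the two gives exactly your $\tfrac12$ estimate with no Poincar\'e inequality and no use of $h_E\leq 1$ (the paper's last two terms, $\proj{0}{\left(\proj{0}{}a-a\right)b}$ and $\proj{0}{\left(\proj{0}{}b-b\right)a}$, are these two representations kept side by side). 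Two mitigating remarks: both your argument and the paper's tacitly use $\norm[\sob{s}{\infty}{E}]{fg}\leq\norm[\sob{s}{\infty}{E}]{f}\norm[\sob{s}{\infty}{E}]{g}$, which for $s\geq 1$ holds only up to Leibniz factors, so the $\tfrac32$ is already a convention-dependent constant; and everywhere the lemma is invoked downstream the constant is absorbed into the generic $\mathcal{C}$. But as a proof of the inequality as stated, your $T_3$ step is the part that fails and must be replaced as above.
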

\begin{proof}
  We consider the following decomposition, exploiting the fact that
  $\proj{0}{}a \proj{0}{}b = \proj{0}{a\proj{0}{}b} =
  \proj{0}{b\proj{0}{}a}$:
  \begin{equation*}
    \begin{split}
      ab - \proj{0}{ab} &= \frac12 \left( ab + ab - \proj{0}{ab} -
        \proj{0}{ab} \right)
      \\
      &= \frac12 \left[ ab - \proj{0}{a}b + \proj{0}{a}b -
        \proj{0}{a}\proj{0}{b} + \proj{0}{b\proj{0}{a}} - \proj{0}{ab}
      \right.
      \\
      & \left. \quad + ab - a \proj{0}{b} + a \proj{0}{b} -
        \proj{0}{a} \proj{0}{b} + \proj{0}{a\proj{0}{b}} -
        \proj{0}{ab} \right]
      \\
      &= \frac12 \left[\left(a-\proj{0}{}a\right) b +
        \left(b-\proj{0}{}b\right) \proj{0}{}a + a
        \left(b-\proj{0}{}b\right) + \left(a-\proj{0}{}a\right)
        \proj{0}{}b \right.
      \\
      & \left. \quad \proj{0}{\left(\proj{0}{}a - a\right) b} +
        \proj{0}{\left(\proj{0}{}b - b\right)a} \right]\,.
    \end{split}
  \end{equation*}
  The proof is concluded by the triangle inequality, the
  Cauchy-Schwarz inequality and by exploiting the fact that
  $\norm[\sob{s}{\infty}{E}]{\proj{0}{}a} = \abs{\eval{ \left(
        \proj{0}{}a \right)}{E}} \leq \norm[\infty,E]{a} \leq
  \norm[\sob{s}{\infty}{E}]{a}$.
\end{proof}

We now estimate the terms inside $\Bsupg[h]{}{}$ to analyse their
continuity, and their consistency with respect to polynomials of order
$k$.
\begin{lemma}
  \label{lem:estim_a-ah}
  For every function $w\in\sobnc{k}{\Th}$ and
  $v_h\in V_h\subset\sobnc{k}{\Th}$,
  \begin{equation}
    \label{eq:ah_continuity}
    \a[h]{w}{v_h} \leq 
    \ennorm[\K\beta\gamma]{w}\ennorm[\K\beta\gamma]{v_h} \,.
  \end{equation}
  Moreover, if $w\in\sobnc{k}{\Th}\cap\sobh{s+1}{\Th}$, then
  \begin{equation}
    \label{eq:estim_a-ah}
    \abs{\a{\proj{k-1}{}w}{v_h} - \a[h]{\proj{k-1}{}w}{v_h}} \leq
    \mathcal{C}
    \max_{E\in\Th}\left\{\mathcal{C}^{nc}_{a,E} \right\} h^s
    \norm[s+1]{w} \ennorm[\K\beta]{v_h} \,,
  \end{equation}
  where
  \begin{equation}
    \label{eq:defCNCa}
    \mathcal{C}^{nc}_{a,E} = \frac{\norm[\sob{s}{\infty}{E}]{\K[\beta,E]
        - \proj{0}{\K[\beta,E]}}}{\sqrt{\Kmin[E]}} \,.
  \end{equation}
\end{lemma}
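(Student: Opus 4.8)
The plan is to treat the two assertions in turn. For the continuity bound~\eqref{eq:ah_continuity} I start from the reformulation of Remark~\ref{remark:defa:alt},
\[
  \a[h]{w}{v_h}=\sum_{E\in\Th}\left(\ahE{w}{v_h}+\shE{\left(I-\proj{k-1}{}\right)w}{\left(I-\proj{k-1}{}\right)v_h}\right),
\]
where $\ahE{w}{v_h}=\scal[E]{\K[\beta,E]\proj{k-1}{}\nabla w}{\proj{k-1}{}\nabla v_h}$ and $\shE{}{}$ is a symmetric positive semidefinite form (for $w\in\sobnc{k}{\Th}\setminus V_h$ the stabilization is read through the VEM interpolant, as in the definition of $\ennorm[\K\beta]{\cdot}$). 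On each $E$ I apply Cauchy--Schwarz twice, once in the $\K[\beta,E]$-weighted inner product via~\eqref{eq:a:continuity} and once for $\shE{}{}$, and combine the two contributions with $xy+zt\le(x^2+z^2)^{1/2}(y^2+t^2)^{1/2}$; summing over $E$ and applying a further discrete Cauchy--Schwarz gives $\a[h]{w}{v_h}\le\a[h]{w}{w}^{1/2}\a[h]{v_h}{v_h}^{1/2}$. Since $\a[h]{v}{v}\le\a{v}{v}+\a[h]{v}{v}=\ennorm[\K\beta]{v}^2\le\ennorm[\K\beta\gamma]{v}^2$, the bound follows with constant one.

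For the consistency estimate~\eqref{eq:estim_a-ah} I first exploit that $\proj{k-1}{}w$ is elementwise a polynomial of degree $k-1$: the stabilization vanishes because $\left(I-\proj{k-1}{}\right)\proj{k-1}{}w=0$, and $\proj{k-1}{}\nabla\proj{k-1}{}w=\nabla\proj{k-1}{}w$ since $\nabla\proj{k-1}{}w\in\Poly{k-2}{E}$. The difference therefore collapses to
\[
  \a{\proj{k-1}{}w}{v_h}-\a[h]{\proj{k-1}{}w}{v_h}=\sum_{E\in\Th}\scal[E]{\K[\beta,E]\nabla\proj{k-1}{}w}{\left(I-\proj{k-1}{}\right)\nabla v_h}.
\]
Because $\left(I-\proj{k-1}{}\right)\nabla v_h$ is $\lebl{}$-orthogonal to $\Poly{k-1}{E}^d$, I subtract the degree-$(k-1)$ projection of the first factor and rewrite each summand as $\scal[E]{\left(I-\proj{k-1}{}\right)\!\left(\K[\beta,E]\nabla\proj{k-1}{}w\right)}{\left(I-\proj{k-1}{}\right)\nabla v_h}$, which vanishes when $\K[\beta,E]$ is constant (then $\K[\beta,E]\nabla\proj{k-1}{}w\in\Poly{k-2}{E}^d$); this is exactly the consistency encoded by $\mathcal{C}^{nc}_{a,E}$. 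A Cauchy--Schwarz on each element then separates the two factors.

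The crucial and hardest step is the product approximation estimate
\[
  \norm[E]{\left(I-\proj{k-1}{}\right)\!\left(\K[\beta,E]\nabla\proj{k-1}{}w\right)}\le\mathcal{C}\,h_E^{s}\,\norm[\sob{s}{\infty}{E}]{\K[\beta,E]-\proj{0}{\K[\beta,E]}}\,\norm[s+1,E]{w}.
\]
The naive choice $\norm[\infty,E]{\K[\beta,E]-\proj{0}{\K[\beta,E]}}\,\norm[E]{\nabla\proj{k-1}{}w}$ yields only a single power of $h_E$; the full order $h_E^{s}$ must instead be obtained by applying the polynomial approximation bound~\eqref{eq:estimP0k-1} to the product $\K[\beta,E]\nabla\proj{k-1}{}w$ and distributing derivatives by the Leibniz rule, using the $\sobh{m}{E}$-stability of the $\lebl{}$-projection and the vanishing of the top derivatives of the polynomial factor $\nabla\proj{k-1}{}w$. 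Lemma~\ref{lem:estim_ab-Pab} enters here to express the $\sob{s}{\infty}{E}$-deviation of the composite coefficient $\K[\beta,E]=\K+\tau_E\beta\beta^\intercal$ through the deviations of $\K$ and $\beta$. I expect this estimate to be the main obstacle, as it is precisely what upgrades the $O(h_E)$ coefficient oscillation into an order-optimal consistency error.

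Finally, to assemble the bound I pair this factor with $\norm[E]{\left(I-\proj{k-1}{}\right)\nabla v_h}$ and invoke the stabilization coercivity~\eqref{eq:vemstab_coercivity} in the form $\sqrt{\Kmin[E]}\,\norm[E]{\left(I-\proj{k-1}{}\right)\nabla v_h}\le\sigma_\ast^{-1/2}\,\vemstab[E]{\left(I-\proj{k-1}{}\right)v_h}{\left(I-\proj{k-1}{}\right)v_h}^{1/2}$, which is exactly what produces the factor $\sqrt{\Kmin[E]}$ in the denominator of $\mathcal{C}^{nc}_{a,E}$. A Cauchy--Schwarz over the elements, together with $\sum_{E\in\Th}\vemstab[E]{\left(I-\proj{k-1}{}\right)v_h}{\left(I-\proj{k-1}{}\right)v_h}\le\a[h]{v_h}{v_h}\le\ennorm[\K\beta]{v_h}^2$ and $h_E\le h$, then gives the right-hand side $\mathcal{C}\,\max_{E\in\Th}\{\mathcal{C}^{nc}_{a,E}\}\,h^{s}\,\norm[s+1]{w}\,\ennorm[\K\beta]{v_h}$.
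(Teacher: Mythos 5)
Your proposal is correct and follows essentially the same path as the paper's proof: the stabilization vanishes on $\proj{k-1}{}w$, the identity $\proj{k-1}{}\nabla\proj{k-1}{}w=\nabla\proj{k-1}{}w$ collapses the difference to $\sum_{E\in\Th}\scal[E]{\K[\beta,E]\nabla\proj{k-1}{}w}{\nabla v_h-\proj{k-1}{}\nabla v_h}$, and the orthogonality trick combined with the projection estimate \eqref{eq:estimP0k-1} applied to the product $\left(\K[\beta,E]-\proj{0}{\K[\beta,E]}\right)\nabla\proj{k-1}{}w$ --- the step you single out as crucial --- is exactly how the paper produces the factor $h_E^{s}\,\norm[\sob{s}{\infty}{E}]{\K[\beta,E]-\proj{0}{\K[\beta,E]}}\,\norm[s+1,E]{w}$. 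The only (harmless) divergences are in the final absorption step, where the paper uses the triangle inequality $\norm[E]{\nabla v_h-\proj{k-1}{}\nabla v_h}\leq\norm[E]{\nabla v_h}+\norm[E]{\proj{k-1}{}\nabla v_h}$ together with the ellipticity bound $\sqrt{\Kmin[E]}\,\norm[E]{\nabla v_h}\leq\norm[E]{\sqrt{\K}\nabla v_h}\leq\ennorm[\K\beta,E]{v_h}$ instead of routing through the stabilization coercivity \eqref{eq:vemstab_coercivity} as you do (both give the same $\sqrt{\Kmin[E]}$ denominator in $\mathcal{C}^{nc}_{a,E}$), and in your invoking Lemma~\ref{lem:estim_ab-Pab}, which the paper does not need inside this proof and defers to the subsequent remark bounding $\mathcal{C}^{nc}_{a,E}$ via the separate oscillations of $\K$ and $\beta$.
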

\begin{proof}
  Regarding \eqref{eq:ah_continuity}, to estimate the continuity of
  $\a[h]{}{}$, we use the Cauchy-Schwarz and H\"{o}lder inequalities
  and the definition of the norm \eqref{eq:defennormKbeta}:
  \begin{equation*}
    \a[h]{w}{v_h} \leq \left(\a[h]{w}{w}\a[h]{v_h}{v_h}\right)^{\frac12}
    \leq  \ennorm[\K\beta]{w} \ennorm[\K\beta]{v_h}\,.
  \end{equation*}
  To prove~\eqref{eq:estim_a-ah}, we first notice that
  $\vemstab[E]{\left(I-\proj{k-1}{}\right)\proj{k-1}{}w}{v_h}=0$ and
  $\proj{k-1}{}\nabla \proj{k-1}{}w = \nabla \proj{k-1}{}w$:
  \begin{align*}
    &\abs{\a{\proj{k-1}{} w}{v_h} - \a[h]{\proj{k-1}{} w}{v_h}}
      \leq \sum_{E\in\Th}
      \abs{ \scal[E]{\K[\beta,E]\nabla \proj{k-1}{} w}{\nabla v_h} 
      \right.
    \\
    &    \left. \quad -\scal[E]{\K[\beta,E]\proj{k-1}{}\nabla\proj{k-1}{} w}
      {\proj{k-1}{}\nabla v_h}} =\sum_{E\in\Th}
      \abs{ \scal[E]{\K[\beta,E]\nabla \proj{k-1}{} w}{\nabla v_h-\proj{k-1}{}\nabla v_h} }\,.
  \end{align*}
  The local terms are bounded using the $k$-consistency
  $\ahE{\cdot}{\cdot} = \aE{\cdot}{\cdot}$ when the coefficients are
  constants and one of the arguments is a polynomial:
  \begin{equation*}
    \begin{split}
      &\scal[E]{\K[\beta,E]\nabla \proj{k-1}{} w} {\nabla
        v_h-\proj{k-1}{}\nabla v_h} =
      \scal[E]{\left(\K[\beta,E]-\proj{0}{\K[\beta,E]}\right) \nabla
        \proj{k-1}{} w}{\nabla v_h-\proj{k-1}{}\nabla v_h}
      \\
      &= \scal[E]{\left(\K[\beta,E]-\proj{0}{\K[\beta,E]}\right)
        \nabla \proj{k-1}{} w -
        \proj{k-1}{\left(\K[\beta,E]-\proj{0}{\K[\beta,E]}\right)
          \nabla \proj{k-1}{} w}}{\nabla v_h-\proj{k-1}{}\nabla v_h}
      \\
      &\leq \norm[E]{\left(\K[\beta,E]-\proj{0}{\K[\beta,E]}\right)
        \nabla \proj{k-1}{} w -
        \proj{k-1}{\left(\K[\beta,E]-\proj{0}{\K[\beta,E]}\right)
          \nabla \proj{k-1}{} w}} \norm[E]{\nabla v_h-\proj{k-1}{}\nabla
        v_h}
      \\
      & \leq \mathcal{C}h^s_E
      \seminorm[s,E]{\left(\K[\beta,E]-\proj{0}{\K[\beta,E]}\right)
        \nabla \proj{k-1}{} w} \left(\norm[E]{\nabla
          v_h}+\norm[E]{\proj{k-1}{}\nabla v_h}\right)
      \\
      & \leq \mathcal{C} h^{s}_E
      \frac{\norm[\sob{s}{\infty}{E}]{\K[\beta,E]-\proj{0}{\K[\beta,E]}}}
      {\sqrt{\Kmin[E]}} \norm[s+1,E]{w} \ennorm[\K\beta,E]{v_h} \,.
    \end{split}
  \end{equation*}
\end{proof}

\begin{remark}
  We can bound $\mathcal{C}^{nc}_{a,E}$ as follows, using
  \eqref{eq:estim_ab-Pab} and \eqref{eq:deftau}:
  \begin{equation*}
    \begin{split}
      \mathcal{C}^{nc}_{a,E} &=
      \frac{\norm[\sob{s}{\infty}{E}]{\K[\beta,E]-\proj{0}{\K[\beta,E]}}}
      {\sqrt{\Kmin[E]}}
      \\
      & \leq \frac{1}{\sqrt{\Kmin[E]}}
      \left(\norm[\sob{s}{\infty}{E}]{\K - \proj{0}{}\K} + \tau_E
        \norm[\sob{s}{\infty}{E}]{ \beta\beta^\intercal -
          \proj{0}{\beta\beta^\intercal}} \right)
      \\
      & \leq \frac{\mathcal{C}}{\sqrt{\Kmin[E]}}
      \left(\norm[\sob{s}{\infty}{E}]{\K - \proj{0}{}\K} +
        \frac{h_E}{\beta_E} \norm[\sob{s}{\infty}{E}]{\beta}
        \norm[\sob{s}{\infty}{E}]{ \beta - \proj{0}{\beta} } \right)
      \,.
    \end{split}
  \end{equation*}
\end{remark}

\begin{lemma}
  \label{lem:estim_b-bh}
  For every function $w\in\sobnc{k}{\Th}$ and
  $v_h\in V_h\subset\sobnc{k}{\Th}$ it holds that
  \begin{equation}
    \label{eq:bh_continuity}
    \begin{split}
      \abs{\b[h]{w}{v_h}} &\leq \mathcal{C} \left[ \max_{E\in\Th}
        \left( \tau_E^{-\frac12} , h^{-1}_E
          \mathcal{K}^{nc}_{b,E}\right) \norm{w} + \max_{E\in\Th}
        \left( \mathcal{C}^{nc,1}_{b,E} , \mathcal{K}^{nc}_{b,E} \right)
        \norm{\nabla w} \right] \ennorm[\K\beta]{v_h}\,,
    \end{split}
  \end{equation}
  where
  \begin{align}
    \label{eq:defCNCb}
    \mathcal{C}^{nc,r}_{b,E} &=  \frac{h_E
                             \norm[\sob{r}{\infty}{E}]{\beta-\proj{0}{}\beta}}
                             {\sqrt{\Kmin[E]}}  \,,\ r\geq 1 \,,
    \\
    \label{eq:defKNCb}
    \mathcal{K}^{nc}_{b,E} &= \frac{h_E
                             \norm[\sob{1}{\infty}{\partial E}]{\beta\cdot\n{}}}
                             {\sqrt{\Kmin[E]}} \,.
  \end{align}
  Moreover, for every function
  $w\in\sobnc{k}{\Th}\cap\sobh{s+1}{\Th}$, it holds that
  \begin{equation}
    \label{eq:estim_b-bh}
    \abs{\b{\proj{k-1}{}w}{v_h} - \b[h]{\proj{k-1}{}w}{v_h}} \leq\mathcal{C}
    \max_{E\in\Th} \mathcal{C}^{nc,s+1}_{b,E} \, h^{s}\norm[s+1]{w}
    \ennorm[\K\beta]{v_h} \,.
  \end{equation}
\end{lemma}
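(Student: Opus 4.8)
The plan is to treat the two claims separately, exploiting the skew-symmetric splitting of $\b[h]{}{}$ in \eqref{eq:defbh} as $\b[h]{w}{v_h}=\tfrac12\sum_{E\in\Th}\left(P_E-Q_E\right)$, with $P_E\defeq\scal[E]{\beta\cdot\proj{k-1}{}\nabla w}{\proj{k-1}{}v_h}$ and $Q_E\defeq\scal[E]{\proj{k-1}{}w}{\beta\cdot\proj{k-1}{}\nabla v_h}$, together with the decomposition $\beta=\proj{0}{}\beta+(\beta-\proj{0}{}\beta)$ into the elementwise mean (a constant, hence divergence free) and a remainder that is $O(h_E)$ in $\lebl[\infty]{E}$. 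The guiding principle is that $\ennorm[\K\beta]{v_h}$ only controls (possibly projected) \emph{gradients} of $v_h$, through $\tau_E\norm[E]{\beta\cdot\proj{k-1}{}\nabla v_h}^2$, $\norm[E]{\sqrt{\K}\proj{k-1}{}\nabla v_h}^2$ and the stabilization \eqref{eq:vemstab_coercivity}; so every occurrence of $v_h$ must be arranged either as a gradient, as a difference $v_h-\proj{k-1}{}v_h$ (removable by the local Poincaré inequality $\norm[E]{v_h-\proj{k-1}{}v_h}\le\mathcal{C}h_E\norm[E]{\nabla v_h}$), or as a boundary trace handled globally.

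For the continuity bound \eqref{eq:bh_continuity} I would dispatch $Q_E$ at once: Cauchy--Schwarz gives $|Q_E|\le\norm[E]{\proj{k-1}{}w}\,\tau_E^{-1/2}\big(\sqrt{\tau_E}\norm[E]{\beta\cdot\proj{k-1}{}\nabla v_h}\big)\le\tau_E^{-1/2}\norm[E]{w}\ennorm[\K\beta,E]{v_h}$, which after summation yields the $\tau_E^{-1/2}\norm{w}$ contribution. The term $P_E$ is the crux, because there $v_h$ appears undifferentiated and its $\lebl{}$-norm is not controlled by $\ennorm[\K\beta]{}$; the remedy is integration by parts. Writing $\proj{k-1}{}\nabla w=\nabla w-(\nabla w-\proj{k-1}{}\nabla w)$ and using $\div\beta=0$, the leading piece becomes $\scal[\partial E]{(\beta\cdot\n{})w}{\proj{k-1}{}v_h}-\scal[E]{w}{\beta\cdot\nabla\proj{k-1}{}v_h}$, in which $v_h$ is now differentiated; the volume part is bounded by $\mathcal{C}\beta_E\norm[E]{w}\norm[E]{\nabla v_h}$ via $\sobh{1}{E}$-stability of $\proj{k-1}{}$, which feeds the $h^{-1}_E\mathcal{K}^{nc}_{b,E}\norm{w}$ contribution, while the residual $\scal[E]{\beta\cdot(\nabla w-\proj{k-1}{}\nabla w)}{\proj{k-1}{}v_h}$, after dropping the mean of $\beta$ by $\lebl{}$-orthogonality against $\Poly{k-1}{E}$, produces the $\mathcal{C}^{nc,1}_{b,E}\norm{\nabla w}$ contribution.

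The interelement boundary terms $\sum_E\scal[\partial E]{(\beta\cdot\n{})w}{\proj{k-1}{}v_h}$ are the main obstacle. Reassembled edge by edge they become integrals of $\beta\cdot\n{}$ against products of traces and jumps, and here the defining property of $\sobnc{k}{\Th}$, that $\jmp{v_h}$ has vanishing moments against $\Poly{k-1}{e}$ on every $e\in\Sh$, lets me subtract the $\Poly{k-1}{e}$-best approximation of $(\beta\cdot\n{})\,w$ for free, so that only the deviation of $\beta\cdot\n{}$ survives; a scaled trace inequality and the local Poincaré inequality then convert the edge norms back to $h_E$-weighted element gradient norms, which is the source of $\mathcal{K}^{nc}_{b,E}=h_E\norm[\sob{1}{\infty}{\partial E}]{\beta\cdot\n{}}/\sqrt{\Kmin[E]}$. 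I expect this to be the most delicate step, both for the bookkeeping of the orientation of $\n{e}$ and of which traces carry the cancellation, and because the residual products of $v_h$ that cannot be recast as $v_h-\proj{k-1}{}v_h$ must ultimately be absorbed through a global discrete Poincaré--Friedrichs inequality on $\sobnc{k}{\Th}$ (available thanks to the vanishing boundary moments), at the price of the mild $\K$-dependence already announced in the introduction.

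For the consistency estimate \eqref{eq:estim_b-bh} no integration by parts is needed, which is exactly why $\mathcal{K}^{nc}_{b,E}$ is absent. Since $\proj{k-1}{}w$ is a polynomial, idempotency of $\proj{k-1}{}$ and its transparency against degree-$(k-1)$ factors collapse the double projections, and the difference reduces to $\tfrac12\sum_E\big[\scal[E]{\beta\cdot\nabla\proj{k-1}{}w}{v_h-\proj{k-1}{}v_h}-\scal[E]{\proj{k-1}{}w}{\beta\cdot(\nabla v_h-\proj{k-1}{}\nabla v_h)}\big]$. In each term the factor multiplying $\beta$ is $\lebl{}$-orthogonal to $\Poly{k-1}{E}$, so I subtract both the mean of $\beta$ (which makes the constant part a polynomial that drops out) and the relevant polynomial projection; a Leibniz/product argument with \eqref{eq:estimP0k-1} then bounds the resulting projection errors by $\mathcal{C}h_E^{s}\norm[\sob{s}{\infty}{E}]{\beta-\proj{0}{}\beta}\norm[s+1,E]{w}$, while the $v_h$ factors are controlled by the local Poincaré inequality $\norm[E]{v_h-\proj{k-1}{}v_h}\le\mathcal{C}h_E\norm[E]{\nabla v_h}$ and by the coercivity \eqref{eq:vemstab_coercivity} rewritten as $\norm[E]{\nabla v_h-\proj{k-1}{}\nabla v_h}\le\mathcal{C}\,\ennorm[\K\beta,E]{v_h}/\sqrt{\Kmin[E]}$; collecting these factors delivers $\mathcal{C}^{nc,s+1}_{b,E}\,h^{s}\norm[s+1]{w}\ennorm[\K\beta]{v_h}$.
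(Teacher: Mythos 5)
Your consistency estimate \eqref{eq:estim_b-bh} is correct and is essentially the paper's own proof: after using $\proj{k-1}{\nabla\proj{k-1}{}w}=\nabla\proj{k-1}{}w$ and idempotency, the difference collapses to the two residuals $\scal[E]{\beta\cdot\nabla\proj{k-1}{}w}{v_h-\proj{k-1}{}v_h}$ and $\scal[E]{\proj{k-1}{}w}{\beta\cdot\left(\nabla v_h-\proj{k-1}{}\nabla v_h\right)}$, which you handle exactly as the paper does (subtract $\proj{0}{}\beta$, move $\proj{k-1}{}$ across by $\lebl{}$-orthogonality, apply \eqref{eq:estimP0k-1} with a product rule). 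The skeleton of your continuity argument --- Cauchy--Schwarz with weight $\tau_E^{-\frac12}$ for $Q_E$, the projection-commutation residual giving $\mathcal{C}^{nc,1}_{b,E}$, integration by parts on the leading part of $P_E$, and jump-moment cancellation plus scaled trace inequalities for the edge terms --- also matches the paper's decomposition into $\TERM{T}{E,1},\dots,\TERM{T}{E,4}$.

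The gap is in how you dispose of the two pieces produced by your integration by parts. First, you bound the volume piece $\scal[E]{w}{\beta\cdot\nabla\proj{k-1}{}v_h}$ by $\mathcal{C}\beta_E\norm[E]{w}\norm[E]{\nabla v_h}$ and assign it to the $h_E^{-1}\mathcal{K}^{nc}_{b,E}\norm{w}$ bucket. This fails: converting $\norm[E]{\nabla v_h}$ into $\ennorm[\K\beta]{v_h}$ leaves the factor $\beta_E\slash\sqrt{\Kmin[E]}$, which is dominated neither by $h_E^{-1}\mathcal{K}^{nc}_{b,E}=\norm[\sob{1}{\infty}{\partial E}]{\beta\cdot\n{}}\slash\sqrt{\Kmin[E]}$ (take a divergence-free $\beta$ compactly supported inside $E$: then $\beta\cdot\n{}\equiv 0$ on $\partial E$ while $\beta_E$ is arbitrarily large) nor by $\tau_E^{-\frac12}$: in the convection-dominated regime $\tau_E^{-\frac12}\simeq\sqrt{2\beta_E\slash h_E}$, so $\beta_E\slash\sqrt{\Kmin[E]}$ exceeds it by roughly $\sqrt{\Pe[E]}$ --- precisely the robustness the lemma is built to retain. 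The cure, which is the paper's route through its terms $\TERM{T}{E,2}$ and $\TERM{T}{E,3}$, is never to estimate the convective derivative of $v_h$ unweighted: write $\beta\cdot\nabla\proj{k-1}{}v_h=\beta\cdot\nabla v_h-\beta\cdot\nabla\left(v_h-\proj{k-1}{}v_h\right)$, pay $\tau_E^{-\frac12}\norm[E]{w}$, absorb $\sqrt{\tau_E}\norm[E]{\beta\cdot\nabla v_h}$ into $\a{v_h}{v_h}$ and $\sqrt{\tau_E}\,\beta_E\norm[E]{\nabla(v_h-\proj{k-1}{}v_h)}$ into the stabilization via \eqref{eq:SASTequivalence}; the term $h_E^{-1}\mathcal{K}^{nc}_{b,E}\norm{w}$ then arises only from the boundary integral $\int_{\partial E}w\left(\beta\cdot\n{}\right)\left(\proj{k-1}{}v_h-v_h\right)$ through trace estimates. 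Second, your boundary term after integration by parts carries the trace of $\proj{k-1}{}v_h$, and $\jmp{\proj{k-1}{}v_h}$ has no vanishing edge moments --- the defining property of $\sobnc{k}{\Th}$ applies to $\jmp{v_h}$ (and $\jmp{w}$) only --- so you must split off $\proj{k-1}{}v_h-v_h$ before invoking the moment condition, as the paper's splitting into $\TERM{T}{E,2}$ and $\TERM{T}{E,3}$ does automatically. With these repairs, your appeal to a global discrete Poincar\'e--Friedrichs inequality becomes unnecessary (and it could not be used anyway without importing a global, $\K$-dependent constant absent from \eqref{eq:bh_continuity}): $\norm{w}$ appears explicitly on the right-hand side, and every occurrence of $v_h$ reduces to weighted gradients, projection defects, or jump oscillations already controlled by $\ennorm[\K\beta]{v_h}$.
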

\begin{proof}
  To obtain \eqref{eq:bh_continuity}, we first introduce the following
  decomposition:
  \begin{equation*}
    \b[h]{w}{v_h} = \sum_{E\in\Th} \TERM{T}{E,1} + \TERM{T}{E,2}
    + \TERM{T}{E,3} + \TERM{T}{E,4}\,,
  \end{equation*}
  where, $\forall E\in\Th$,
  \begin{align}
    \label{eq:bh_continuity-TERM1}
    \TERM{T}{E,1} &= \scal[E]{\beta\cdot\left(\proj{k-1}{}\nabla w
                    - \nabla w\right)}{\proj{k-1}{} v_h} \,,
    \\
    \label{eq:bh_continuity-TERM2}
    \TERM{T}{E,2} &= \scal[E]{\beta\cdot\nabla w}
                    {\proj{k-1}{} v_h - v_h} \,,
    \\
    \label{eq:bh_continuity-TERM3}
    \TERM{T}{E,3} &= \scal[E]{\beta\cdot\nabla w}{v_h} \,,
    \\
    \label{eq:bh_continuity-TERM4}
    \TERM{T}{E,4} &= -\scal[E]{\proj{k-1}{} w}{
                    \beta\cdot\proj{k-1}{}\nabla v_h} \,.
  \end{align}
  We estimate $\TERM{T}{E,1}$ in \eqref{eq:bh_continuity-TERM1} as
  follows:
  \begin{equation*}
    \begin{split}
      \abs{\TERM{T}{E,1}} &= \abs{\scal[E]{\proj{k-1}{}\nabla w -
          \nabla w}{(\beta-\proj{0}{}\beta)\proj{k-1}{}v_h}}
      \\
      &= \abs{\scal[E]{\nabla
          w}{(\beta-\proj{0}{}\beta)\proj{k-1}{}v_h -
          \proj{k-1}{(\beta-\proj{0}{}\beta)\proj{k-1}{}v_h} } }
      \\
      &\leq \norm[E]{\nabla w}
      \norm[E]{(\beta-\proj{0}{}\beta)\proj{k-1}{}v_h -
        \proj{k-1}{(\beta-\proj{0}{}\beta)\proj{k-1}{}v_h} }
      \\
      & \leq \mathcal{C} h_E \norm[E]{\nabla w}
      \seminorm[1,E]{(\beta-\proj{0}{}\beta)\proj{k-1}{}v_h}
      \\
      & \leq \mathcal{C} h_E \norm[E]{\nabla w}
      \norm[\sob{1}{\infty}{E}]{\beta-\proj{0}{}\beta}
      \norm[1,E]{\proj{k-1}{}v_h}
      \\
      & \leq \mathcal{C} \frac{h_E \norm[\sob{1}{\infty}{E}]{\beta -
          \proj{0}{}\beta}}{\sqrt{\Kmin[E]}} \norm[E]{\nabla w}
      \ennorm[\K\beta]{v_h}\,.
    \end{split}
  \end{equation*}
  $\TERM{T}{E,2}$ in \eqref{eq:bh_continuity-TERM2} is estimated
  similarly, using also \eqref{eq:SASTequivalence}:
  \begin{equation*}
    \begin{split}
      \abs{\TERM{T}{E,2}} &= \abs{\scal[E]{\beta\cdot\nabla w}
        {\proj{k-1}{} v_h - v_h}}
      \\
      &\leq \abs{\scal[E]{w}{\beta\cdot\nabla\left(\proj{k-1}{} v_h -
            v_h\right)}} + \abs{\int_{\partial E} w
        \left(\beta\cdot\n{}\right) \left(\proj{k-1}{}v_h - v_h
        \right)}
      \\
      &\leq \tau^{-\frac12}_E \norm[E]{w} \tau^{\frac12}_E
      \norm[E]{\beta\cdot\nabla\left(\proj{k-1}{} v_h - v_h\right)}
      \\
      &\quad + \mathcal{C} \norm[\partial E]{w} \norm[\infty,\partial
      E]{\beta\cdot\n{}} \norm[\partial E]{v_h-\proj{k-1}{} v_h}
      \\
      &\leq \tau^{-\frac12}_E \norm[E]{w} \cdot
      \sqrt{\frac{1}{\sigma_\ast} \tau_E \beta_E^2
        \vemstabAST[E]{\left(I-\proj{k-1}{}\right)
          v_h}{\left(I-\proj{k-1}{}\right) v_h}}
      \\
      &\quad + \mathcal{C} h^{-\frac12}_E\norm[E]{w}
      \norm[\infty,\partial E]{\beta\cdot\n{}} \cdot h^{\frac12}_E
      \norm[E]{\nabla v_h}
      \\
      &\leq \mathcal{C} \left(\tau_E^{-\frac12} +
        \frac{\norm[\infty,\partial
          E]{\beta\cdot\n{}}}{\sqrt{\Kmin[E]}}\right) \norm[E]{w}
      \ennorm[\K\beta,E]{v_h} \,.
    \end{split}
  \end{equation*}
  The estimation of $\TERM{T}{E,3}$ in \eqref{eq:bh_continuity-TERM3}
  requires an application of Green's formula, as follows:
  \begin{equation*}
    \begin{split}
      \abs{\sum_{E\in\Th}\TERM{T}{E,3}} &\leq
      \abs{\sum_{E\in\Th}\scal[E]{w}{\beta\cdot \nabla v_h}} +
      \abs{\sum_{E\in\Th}\int_{\partial E}\left(\beta\cdot\n{}\right)
        w v_h }
      \\
      &= \abs{\sum_{E\in\Th}\scal[E]{w}{\beta\cdot \nabla v_h}} +
      \abs{\frac12 \sum_{E\in\Th}\int_{\partial
          E}\left(\beta\cdot\n{}\right) \jmp{w v_h} } \,.
    \end{split}
  \end{equation*}
  The first term is estimated locally by the Cauchy-Schwarz
  inequality:
  \begin{equation*}
    \scal[E]{w}{\beta\cdot\nabla v_h} \leq \tau_E^{-\frac12}\norm[E]{w}
    \ennorm[\K\beta,E]{v_h} \,.
  \end{equation*}
  The boundary terms are estimated exploiting
  $\int_{\partial E}\beta\cdot\n{} = \int_E \nabla\cdot\beta = 0$, and denoting by
  $\proj[0,\partial E]{k-1}{}v$ the piecewise polynomial projection
  of $v$ on each $e\subset \partial E$:
  \begin{equation*}
    \begin{split}
      \int_{\partial E}\left(\beta\cdot\n{}\right) \jmp{w v_h} &=
      \sum_{R\in\omega_E} \int_{E\cap R}\left(\beta\cdot\n{}\right)
      \left( \eval{w}{R} \jmp{v_h} + \jmp{w} \eval{v_h}{E} \right)
      \\
      &= \sum_{R\in\omega_E} \int_{E\cap R} \left(
        \left(\beta\cdot\n{}\right)\eval{w}{R} - \proj[0,\partial
        E]{k-1}{\left(\beta\cdot\n{}\right)\eval{w}{R}} \right)
      \jmp{v_h}
      \\
      &\quad + \sum_{R\in\omega_E} \int_{E\cap R} \jmp{w} \left(
        \left(\beta\cdot\n{}\right)\eval{v_h}{E} - \proj[0,\partial
        E]{k-1}{\left(\beta\cdot\n{}\right)\eval{v_h}{E}} \right)
      \\
      &= \sum_{R\in\omega_E} \int_{E\cap R} \left(
        \left(\beta\cdot\n{}\right)\eval{w}{R} - \proj[0,\partial
        E]{k-1}{\left(\beta\cdot\n{}\right)\eval{w}{R}} \right)
      \jmp{v_h - \proj[0,\partial E]{k-1}{v_h}}
      \\
      &\quad + \sum_{R\in\omega_E} \int_{E\cap R} \jmp{w -
        \proj{k-1}{}w } \left(
        \left(\beta\cdot\n{}\right)\eval{v_h}{E} - \proj[0,\partial
        E]{k-1}{\left(\beta\cdot\n{}\right)\eval{v_h}{E}} \right)
      \\
      &= \sum_{R\in\omega_E} \norm[R\cap
      E]{\left(\beta\cdot\n{}\right)\eval{w}{R} - \proj[0,\partial
        E]{k-1}{\left(\beta\cdot\n{}\right)\eval{w}{R}}} \norm[E\cap
      R]{\jmp{v_h - \proj[0,\partial E]{k-1}{v_h}}}
      \\
      &\quad + \sum_{R\in\omega_E} \norm[E\cap R]{\jmp{w -
          \proj{k-1}{}w }} \norm[E\cap R]{
        \left(\beta\cdot\n{}\right)\eval{v_h}{E} - \proj[0,\partial
        E]{k-1}{\left(\beta\cdot\n{}\right)\eval{v_h}{E}}}
      \\
      &\leq \mathcal{C}_1 \left( \sum_{R\in\omega_E} h_E
        \seminorm[1,R\cap E]{\left(\beta\cdot\n{}\right)\eval{w}{R}}
      \right) h^{\frac12}_E \norm[\omega_E]{\nabla v_h}
      \\
      &\quad + \mathcal{C}_2 \left( \sum_{R\in\omega_E} h_E
        \seminorm[1,E\cap R]{
          \left(\beta\cdot\n{}\right)\eval{v_h}{E}} \right)
      h^{\frac12}_E \norm[\omega_E]{\nabla w }
      \\
      & \leq \mathcal{C}_1 h_E^{\frac12}
      \norm[\sob{1}{\infty}{\partial E}]{\beta\cdot\n{}}
      \norm[\omega_E]{\nabla w} \cdot
      h_E^{\frac12}\norm[\omega_E]{\nabla v_h}
      \\
      &\quad + \mathcal{C}_2 h_E^{\frac12}
      \norm[\sob{1}{\infty}{\partial E}]{\beta\cdot\n{}}
      \norm[\omega_E]{\nabla v_h} \cdot
      h_E^{\frac12}\norm[\omega_E]{\nabla w}
      \\
      & \leq \mathcal{C} \frac{h_E \norm[\sob{1}{\infty}{\partial
          E}]{\beta\cdot\n{}}} {\sqrt{\Kmin[E]}}
      \norm[\omega_E]{\nabla w} \ennorm[\K\beta,\omega_E]{v_h} \,.
    \end{split}
  \end{equation*}
  The estimate of $\TERM{T}{E,4}$ defined by
  \eqref{eq:bh_continuity-TERM4} is obtained by the Cauchy-Schwarz
  inequality, the continuity of projections and the definition of the
  norm \eqref{eq:defennormKbeta}:
  \begin{equation*}
    \begin{split}
      \abs{ \TERM{T}{E,4}} &= \abs{\scal[E]{\proj{k-1}{} w}{
          \beta\cdot\proj{k-1}{}\nabla v_h}} \leq \mathcal{C}
      \tau_E^{-\frac12} \norm[E]{w} \ennorm[\K\beta,E]{v_h} \,.
    \end{split}
  \end{equation*}
  To derive \eqref{eq:estim_b-bh}, we set
  \begin{equation*}
    \b{\proj{k-1}{}w}{v_h} - \b[h]{\proj{k-1}{}w}{v_h} = \sum_{E\in\Th}
    \big(\TERM{R}{E,1} - \TERM{R}{E,2}\big)\,,
  \end{equation*}
  where, recalling that
  $\proj{k-1}{\nabla\proj{k-1}{}w} = \nabla\proj{k-1}{}w$,
  \begin{align}
    \label{eq:estim_b-bh-TERM1}
    \TERM{R}{E,1}&=\scal[E]{\beta\cdot\nabla\proj{k-1}{} w}{v_h - \proj{k-1}{}v_h}
                   \,,
    \\
    \label{eq:estim_b-bh-TERM2}
    \TERM{R}{E,2}&=\scal[E]{\proj{k-1}{}w}{\beta\cdot \left( \nabla v_h -
                   \proj{k-1}{}\nabla v_h \right)} \,.
  \end{align}
  $\TERM{R}{E,1}$ can be estimated as follows:
  \begin{equation*}
    \begin{split}
      \TERM{R}{E,1} &= \scal[E]{\beta\cdot\nabla\proj{k-1}{} w}{v_h -
        \proj{k-1}{}v_h} = \scal[E]{\left(\beta -
          \proj{0}{}\beta\right) \cdot \nabla\proj{k-1}{} w}{v_h -
        \proj{k-1}{}v_h}
      \\
      &= \scal[E]{\left(\beta - \proj{0}{}\beta\right) \cdot
        \nabla\proj{k-1}{} w - \proj{k-1}{\left(\beta -
            \proj{0}{}\beta\right) \cdot \nabla\proj{k-1}{} w}}{v_h -
        \proj{k-1}{}v_h}
      \\
      &\leq \norm[E]{\left(\beta - \proj{0}{}\beta\right) \cdot
        \nabla\proj{k-1}{} w - \proj{k-1}{\left(\beta -
            \proj{0}{}\beta\right) \cdot \nabla\proj{k-1}{} w}}
      \norm[E]{v_h - \proj{k-1}{}v_h}
      \\
      &\leq \mathcal{C} h^{s}_E \seminorm[s,E]{\left(\beta -
          \proj{0}{}\beta\right) \cdot \nabla\proj{k-1}{} w} \cdot h_E
      \norm[E]{\nabla v_h}
      \\
      &\leq \mathcal{C} h^{s+1}_E
      \norm[\sob{s}{\infty}{E}]{\beta-\proj{0}{}\beta} \norm[s+1,E]{w}
      \norm[E]{\nabla v_h}
      \\
      &\leq \mathcal{C} \frac{h_E \norm[\sob{s}{\infty}{E}]{\beta -
          \proj{0}{}\beta}}{\sqrt{\Kmin[E]}} h^s_E \norm[s+1,E]{w}
      \ennorm[\K\beta]{v_h} \,.
    \end{split}
  \end{equation*}
  The estimate of $\TERM{R}{E,2}$ in \eqref{eq:estim_b-bh-TERM2} is
  obtained as follows:
  \begin{equation*}
    \begin{split}
      \TERM{R}{E,2}&=\scal[E]{\proj{k-1}{}w}{\beta\cdot \left( \nabla
          v_h - \proj{k-1}{}\nabla v_h \right)} = \scal[E]{\left(
          \beta - \proj{0}{}\beta \right)\proj{k-1}{}w}{ \nabla v_h -
        \proj{k-1}{}\nabla v_h}
      \\
      &= \scal[E]{\left( \beta - \proj{0}{}\beta \right)\proj{k-1}{}w
        - \proj{k-1}{\left( \beta - \proj{0}{}\beta
          \right)\proj{k-1}{}w}}{ \nabla v_h }
      \\
      &\leq \norm[E]{\left( \beta - \proj{0}{}\beta
        \right)\proj{k-1}{}w - \proj{k-1}{\left( \beta -
            \proj{0}{}\beta \right)\proj{k-1}{}w}} \norm[E]{ \nabla
        v_h }
      \\
      &\leq \mathcal{C}h^{s+1}_E\seminorm[s+1,E]{\left( \beta -
          \proj{0}{}\beta \right)\proj{k-1}{}w} \norm[E]{ \nabla v_h }
      \\
      &\leq \mathcal{C}h^{s+1}_E\norm[\sob{s+1}{\infty}{E}]{\beta -
        \proj{0}{}\beta} \norm[s+1,E]{\proj{k-1}{}w} \norm[E]{ \nabla
        v_h }
      \\
      &\leq \mathcal{C}\frac{h_E \norm[\sob{s+1}{\infty}{E}]{\beta -
          \proj{0}{}\beta}}{\sqrt{\Kmin[E]}} h^{s}_E
      \norm[s+1,E]{w} \ennorm[\K\beta,E]{v_h} \,.
    \end{split}
  \end{equation*}
\end{proof}

\begin{remark}
  The coefficient $\mathcal{K}^{nc}_{b,E}$ can be rewritten,
  considering that
  $\overline{\beta\cdot\n{}}^{\,\partial E}=\int_{\partial E}\beta\cdot\n{}=0$,
  in the
  following way:
  \begin{equation*}
    \mathcal{K}^{nc}_{b,E} = \frac{h_E
      \norm[\sob{1}{\infty}{\partial E}]{\beta\cdot\n{}}}
    {\sqrt{\Kmin[E]}}
    = \frac{h_E
      \norm[\sob{1}{\infty}{\partial E}]
      {\beta\cdot\n{} - \overline{\beta\cdot\n{}}^{\,\partial E}}}
    {\sqrt{\Kmin[E]}}
  \end{equation*}
\end{remark}

\begin{lemma}
  \label{lem:estim_c-ch}
  For every function $w\in\sobnc{k}{\Th}$ and $v_h\in V_h$ it holds
  that
  \begin{equation}
    \label{eq:ch_continuity}
    \abs{\c[h]{w}{v_h}} \leq (1 + \sqrt{C_\tau})\ennorm[\K\beta\gamma]{w}
    \ennorm[\K\beta\gamma]{v_h}\,.
  \end{equation}
  Moreover, for every function
  $w\in\sobnc{k}{\Th}\cap\sobh{s+1}{\Th}$, it holds that
  \begin{equation}
    \label{eq:estim_c-ch}
    \begin{split}
      & \abs{\c{\proj{k-1}{}w}{v_h} - \c[h]{\proj{k-1}{}w}{v_h} } \leq
      \mathcal{C} \max_{E\in\Th} \mathcal{C}^{nc}_{c,E} \, h^s
      \norm[s+1]{w} \ennorm[\K\beta]{v_h} \,,
    \end{split}
  \end{equation}
  \begin{equation}
    \label{eq:defCNCc}
    \mathcal{C}^{nc}_{c,E} = \max \left\{ \frac{h^2_E
        \norm[\sob{s+1}{\infty}{E}]{\gamma-\proj{0}{}\gamma}}{\sqrt{\Kmin[E]}}
      , \frac{h_E\tau_E \norm[\sob{s+1}{\infty}{E}]
        {\gamma\beta-\proj{0}{\gamma\beta}}}{\sqrt{\Kmin[E]}}
    \right\}
  \end{equation}
\end{lemma}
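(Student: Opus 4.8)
The plan is to prove the two inequalities separately, reusing the polynomial--consistency mechanism already exploited in Lemmas~\ref{lem:estim_a-ah} and~\ref{lem:estim_b-bh}. For the continuity bound~\eqref{eq:ch_continuity} I split $\c[h]{w}{v_h}$ as in~\eqref{eq:defch} into the reaction part $\sum_{E\in\Th}\scal[E]{\gamma\proj{k-1}{}w}{\proj{k-1}{}v_h}$ and the SUPG part $\sum_{E\in\Th}\tau_E\scal[E]{\gamma\proj{k-1}{}w}{\beta\cdot\proj{k-1}{}\nabla v_h}$. In the reaction part I write $\gamma=\sqrt{\gamma}\sqrt{\gamma}$, apply Cauchy--Schwarz elementwise and then over the mesh, and recognise $\norm{\sqrt{\gamma}\proj{k-1}{}\EMPTY}$ as a summand of $\ennorm[\K\beta\gamma]{\EMPTY}^2$; this produces the constant $1$. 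In the SUPG part I factor $\tau_E\gamma=\sqrt{\gamma}\,\sqrt{\gamma_E\tau_E}\,\sqrt{\tau_E}$, invoke $\gamma_E\tau_E\le C_\tau$ from~\eqref{eq:deftau}, and use that $\sqrt{\tau_E}\,\norm[E]{\beta\cdot\proj{k-1}{}\nabla v_h}\le\ennorm[\K\beta,E]{v_h}$ because this quantity is one of the summands of $\a[h]{v_h}{v_h}$; this produces the factor $\sqrt{C_\tau}$. Adding the two contributions gives~\eqref{eq:ch_continuity}.

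For the consistency estimate~\eqref{eq:estim_c-ch} I first use the idempotency $\proj{k-1}{}\proj{k-1}{}w=\proj{k-1}{}w$ to write the difference as $\sum_{E\in\Th}\big(\scal[E]{\gamma\proj{k-1}{}w}{v_h-\proj{k-1}{}v_h}+\tau_E\scal[E]{\gamma\beta\proj{k-1}{}w}{\nabla v_h-\proj{k-1}{}\nabla v_h}\big)$, where both $v_h$-fluctuations are $\lebl{}$-orthogonal to $\Poly{k-1}{E}$. In each summand I insert the constant $\proj{0}{}\gamma$ (resp. the constant vector $\proj{0}{\gamma\beta}$): since $(\proj{0}{}\gamma)\proj{k-1}{}w\in\Poly{k-1}{E}$ and $(\proj{0}{\gamma\beta})\proj{k-1}{}w\in\big[\Poly{k-1}{E}\big]^d$, orthogonality annihilates them and leaves the coefficient fluctuations $(\gamma-\proj{0}{}\gamma)\proj{k-1}{}w$ and $(\gamma\beta-\proj{0}{\gamma\beta})\proj{k-1}{}w$. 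Using orthogonality a second time to subtract the $\proj{k-1}{}$-image of these (smooth) products, then Cauchy--Schwarz, the approximation estimate~\eqref{eq:estimP0k-1}, and a Leibniz bound of the form $\seminorm[s+1,E]{(c-\proj{0}{}c)\proj{k-1}{}w}\le\mathcal{C}\norm[\sob{s+1}{\infty}{E}]{c-\proj{0}{}c}\norm[s+1,E]{w}$ (exactly as in Lemma~\ref{lem:estim_a-ah}), the smooth factors are bounded by $\mathcal{C}h_E^{s+1}\norm[\sob{s+1}{\infty}{E}]{c-\proj{0}{}c}\norm[s+1,E]{w}$ with $c=\gamma$ and $c=\gamma\beta$ respectively.

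It then remains to absorb the $v_h$-factors into the energy norm. For the reaction part I use the Poincar\'e-type bound $\norm[E]{v_h-\proj{k-1}{}v_h}\le\mathcal{C}h_E\norm[E]{\nabla v_h}\le\mathcal{C}h_E\,\Kmin[E]^{-1/2}\ennorm[\K\beta,E]{v_h}$, which together with the $h_E^{s+1}$ above yields the first entry $h_E^2\norm[\sob{s+1}{\infty}{E}]{\gamma-\proj{0}{}\gamma}/\sqrt{\Kmin[E]}$ of $\mathcal{C}^{nc}_{c,E}$. For the SUPG part I keep $\tau_E$ in front and bound $\norm[E]{\nabla v_h-\proj{k-1}{}\nabla v_h}\le\mathcal{C}\,\Kmin[E]^{-1/2}\ennorm[\K\beta,E]{v_h}$ through the stabilization lower bound~\eqref{eq:vemstab_coercivity} (using $\K[\beta,E]\ge\Kmin[E]\,I$), which yields the second entry $h_E\tau_E\norm[\sob{s+1}{\infty}{E}]{\gamma\beta-\proj{0}{\gamma\beta}}/\sqrt{\Kmin[E]}$. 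Factoring out $\max_{E\in\Th}\mathcal{C}^{nc}_{c,E}$, using $h_E\le h$, and a final discrete Cauchy--Schwarz over the elements delivers~\eqref{eq:estim_c-ch}.

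I expect the consistency part to be the main obstacle: obtaining the sharp power of $h$ rests on using the $\Poly{k-1}{E}$-orthogonality of the $v_h$-fluctuations twice -- once to bring in $\proj{0}{}$ of the coefficients and once to subtract $\proj{k-1}{}$ of the resulting smooth products -- and then applying the Leibniz bound so that the Sobolev regularity $\norm[s+1,E]{w}$ and the coefficient-fluctuation norms separate cleanly; one must also keep the approximation order $s+1$ within the admissible range of~\eqref{eq:estimP0k-1}.
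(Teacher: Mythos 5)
Your proposal is correct and follows essentially the same route as the paper's proof: the same two-term splitting of $\c[h]{}{}$ with the factorization $\tau_E\gamma\leq\sqrt{\gamma}\,\sqrt{\gamma_E\tau_E}\,\sqrt{\tau_E}$ and $\gamma_E\tau_E\leq C_\tau$ for \eqref{eq:ch_continuity}, and for \eqref{eq:estim_c-ch} the same two fluctuation terms obtained from the idempotency of $\proj{k-1}{}$, treated by the identical double-orthogonality device (insert $\proj{0}{}\gamma$, resp.\ $\proj{0}{\gamma\beta}$, then subtract the $\proj{k-1}{}$-image of the resulting product) followed by \eqref{eq:estimP0k-1} and the Leibniz-type bound. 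The only deviation is harmless: you absorb $\norm[E]{\nabla v_h-\proj{k-1}{}\nabla v_h}$ into the energy norm through the stabilization lower bound \eqref{eq:vemstab_coercivity}, whereas the paper simply uses the $\lebl{}$-projection contraction $\norm[E]{\nabla v_h-\proj{k-1}{}\nabla v_h}\leq\norm[E]{\nabla v_h}$ together with $\Kmin[E]$-ellipticity, so your route merely adds a factor $\sigma_\ast^{-1/2}$ to the generic constant $\mathcal{C}$.
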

\begin{proof}
  Inequality \eqref{eq:ch_continuity} follows easily from the
  definition of the norm \eqref{eq:defennormKbetagamma} and the
  definition of $\tau_E$ \eqref{eq:deftau}:
  \begin{equation*}
    \begin{split}
      &\scal[E]{\gamma \proj{k-1}{}w}{\proj{k-1}{}v_h} + \tau_E
      \scal[E]{\gamma \proj{k-1}{}w}{\beta\cdot\proj{k-1}{}\nabla v_h}
      \leq \norm[E]{\sqrt{\gamma} \proj{k-1}{}w}
      \norm[E]{\sqrt{\gamma} \proj{k-1}{} v_h}
      \\
      &\quad + \sqrt{\tau_E\gamma_E} \norm[E]{\sqrt{\gamma}
        \proj{k-1}{} w}\cdot \sqrt{\tau_E} \norm[E]{\beta\cdot
        \proj{k-1}{}\nabla v_h} \leq \left(1+\sqrt{C_\tau}\right)
      \ennorm[\K\beta\gamma,E]{w} \ennorm[\K\beta\gamma,E]{v_h} \,.
    \end{split}
  \end{equation*}
  To prove \eqref{eq:estim_c-ch}, we start with:
  \begin{equation*}
    \c{\proj{k-1}{}w}{v_h}-\c[h]{\proj{k-1}{}w}{v_h} = \sum_{E\in\Th}\left(
      \TERM{R}{E,1} +\TERM{R}{E,2}\right)\,,
  \end{equation*}
  where
  \begin{align}
    \label{eq:estim_c-ch-TERM1}
    \TERM{R}{E,1}&=\scal[E]{\gamma \proj{k-1}{} w}{v_h - \proj{k-1}{}v_h} \,,
    \\
    \label{eq:estim_c-ch-TERM2}
    \TERM{R}{E,2}&=\tau_E\scal[E]{\gamma\proj{k-1}{}w}
                   {\beta\cdot\nabla v_h - \beta\cdot\proj{k-1}{}\nabla v_h} \,.
  \end{align}
  The first term, given by \eqref{eq:estim_c-ch-TERM1}, can be bounded
  as follows:
  \begin{equation*}
    \begin{split}
      \TERM{R}{E,1} &= \scal[E]{\left(\gamma - \proj{0}{}\gamma
        \right) \proj{k-1}{} w}{v_h - \proj{k-1}{}v_h}
      \\
      &= \scal[E]{\left(\gamma - \proj{0}{}\gamma \right) \proj{k-1}{}
        w - \proj{k-1}{\left(\gamma - \proj{0}{}\gamma \right)
          \proj{k-1}{} w}}{v_h - \proj{k-1}{}v_h}
      \\
      &\leq \norm[E]{\left(\gamma - \proj{0}{}\gamma \right)
        \proj{k-1}{} w - \proj{k-1}{\left(\gamma - \proj{0}{}\gamma
          \right) \proj{k-1}{} w}} \norm[E]{v_h - \proj{k-1}{}v_h}
      \\
      &\leq \mathcal{C} h^{s+1}_E \seminorm[s+1,E]{\left(\gamma -
          \proj{0}{}\gamma \right) \proj{k-1}{} w} \cdot h_E
      \norm[E]{\nabla v_h}
      \\
      &\leq \mathcal{C} h^{s+2}_E \norm[\sob{s+1}{\infty}{E}]{\gamma -
        \proj{0}{}\gamma} \norm[s+1,E]{\proj{k-1}{}w} \norm[E]{\nabla
        v_h}
      \\
      &\leq \mathcal{C} \frac{h_E^2 \norm[\sob{s+1}{\infty}{E}]{\gamma
          - \proj{0}{}\gamma}}{\sqrt{\Kmin[E]}} h^s_E \norm[s+1,E]{w}
      \ennorm[\K\beta,E]{v_h} \,.
    \end{split}
  \end{equation*}
  The term $\TERM{R}{E,2}$ in \eqref{eq:estim_b-bh-TERM2} can be
  bounded as follows:
  \begin{equation*}
    \begin{split}
      \TERM{R}{E,2}& = \tau_E \scal[E]{\gamma\proj{k-1}{}w}
      {\beta\cdot\nabla v_h - \beta\cdot\proj{k-1}{}\nabla v_h}
      \\
      &= \tau_E \scal[E]{\left(\gamma\beta -
          \proj{0}{\gamma\beta}\right) \proj{k-1}{}w}{\nabla v_h -
        \proj{k-1}{}\nabla v_h}
      \\
      &= \tau_E \scal[E]{\left(\gamma\beta -
          \proj{0}{\gamma\beta}\right) \proj{k-1}{}w -
        \proj{k-1}{\left(\gamma\beta - \proj{0}{\gamma\beta}\right)
          \proj{k-1}{}w}}{\nabla v_h - \proj{k-1}{}\nabla v_h}
      \\
      &\leq \tau_E \norm[E]{\left(\gamma\beta -
          \proj{0}{\gamma\beta}\right) \proj{k-1}{}w -
        \proj{k-1}{\left(\gamma\beta - \proj{0}{\gamma\beta}\right)
          \proj{k-1}{}w}} \norm[E]{\nabla v_h - \proj{k-1}{}\nabla
        v_h}
      \\
      &\leq \mathcal{C} h_E^{s+1} \tau_E
      \seminorm[s+1,E]{\left(\gamma\beta -
          \proj{0}{\gamma\beta}\right) \proj{k-1}{}w} \norm[E]{\nabla
        v_h}
      \\
      &\leq \mathcal{C} h_E \tau_E
      \norm[\sob{s+1}{\infty}{E}]{\gamma\beta-\proj{0}{\gamma\beta}}
      h_E^{s} \norm[s+1,E]{w} \norm[E]{\nabla v_h}
      \\
      &\leq \mathcal{C} \frac{h_E \tau_E
        \norm[\sob{s+1}{\infty}{E}]{\gamma\beta -
          \proj{0}{\gamma\beta} } }{\sqrt{\Kmin[E]}} h^s_E
      \norm[s+1,E]{w} \ennorm[\K\beta,E]{v_h} \,.
    \end{split}
  \end{equation*}

\end{proof}

\begin{remark}
  The second argument of the max in \eqref{eq:defCNCc} can be bounded
  by \eqref{eq:estim_ab-Pab} and \eqref{eq:deftau}:
  \begin{equation*}
    \begin{split}
      \frac{h_E\tau_E \norm[\sob{s+1}{\infty}{E}]
        {\gamma\beta-\proj{0}{\gamma\beta}}}{\sqrt{\Kmin[E]}} \leq
      \frac{\mathcal{C}}{\sqrt{\Kmin[E]}} \left( \frac{C_\tau h_E}{\gamma_E}
        \norm[\sob{s+1}{\infty}{E}]{\gamma}
        \norm[\sob{s+1}{\infty}{E}]{\beta - \proj{0}{}\beta} \right.
      \\
      \left. + \frac{h^2_E}{\beta_E} \norm[\sob{s+1}{\infty}{E}]{
          \beta} \norm[\sob{s+1}{\infty}{E}]{\gamma -
          \proj{0}{}\gamma} \right)
    \end{split}
  \end{equation*}
\end{remark}

\begin{lemma}
  \label{lem:estim_d-dh}
  For any $w\in \sobnc{k}{\Th}$ and $\forall v_h\in V_h$,
  \begin{equation}
    \label{eq:dh_continuity}
    \d[h]{w}{v_h} \leq \ennorm[\K\beta]{w} \ennorm[\K\beta]{v_h} \,.
  \end{equation}
  Moreover, if $w\in V\cap\sobh{s+1}{\Th}$, then
  \begin{equation}
    \label{eq:estim_d-dh}
    \begin{split}
      \abs{\d{\proj{k-1}{}w}{v_h} - \d[h]{\proj{k-1}{}w}{v_h}} &\leq
      \mathcal{C} \max_{E\in\Th} \mathcal{C}^{nc}_{d,E} \, h^s_E
      \norm[s+1]{w} \ennorm[\K\beta]{v_h} \,,
    \end{split}
  \end{equation}
  where
  \begin{equation}
    \label{eq:defCNCd}
    \mathcal{C}^{nc}_{d,E} = \max \left\{ \frac{h^{-1}_E \tau_E
        {\displaystyle\sum_{i=1}^d}
        \norm[\sob{s}{\infty}{E}]{\beta_i\K - \proj{0}{\beta_i\K}}}
      {\sqrt{\Kmin[E]}}
      , \frac{\tau_E
        \norm[\sob{s}{\infty}{E}]{(\nabla \beta)^\intercal \K -
          \proj{0}{\left( \nabla \beta \right)^\intercal \K}
        }}{\sqrt{\Kmin[E]}} \right\} \,.
  \end{equation}
\end{lemma}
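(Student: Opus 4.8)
The plan is to prove the two assertions separately, using throughout that $\nabla\proj{k-1}{}w$ is a polynomial of degree at most $k-2$, so that $\proj{k-1}{}\nabla\proj{k-1}{}w=\nabla\proj{k-1}{}w$ and the continuous and discrete forms differ only in their second slot.

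For the continuity \eqref{eq:dh_continuity}, I would apply the Cauchy--Schwarz inequality elementwise and write $\abs{\d[h]{w}{v_h}}\le\sum_{E\in\Th}\big(\sqrt{\tau_E}\,\norm[E]{\div(\K\proj{k-1}{}\nabla w)}\big)\big(\sqrt{\tau_E}\,\norm[E]{\beta\cdot\proj{k-1}{}\nabla v_h}\big)$. The second factor is bounded by $\ennorm[\K\beta,E]{v_h}$ directly from \eqref{eq:defennormKbeta}, since the term $\tau_E\norm[E]{\beta\cdot\proj{k-1}{}\nabla v_h}^2$ is present in $\a[h]{v_h}{v_h}$. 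For the first factor I would invoke the inverse inequality \eqref{eq:estimnormlapl} for the polynomial vector field $\proj{k-1}{}\nabla w$, giving $\tilde{C}_k^E h_E^2\norm[E]{\div(\K\proj{k-1}{}\nabla w)}^2\le\norm[E]{\K\proj{k-1}{}\nabla w}^2\le\K[E]\norm[E]{\sqrt{\K}\proj{k-1}{}\nabla w}^2$, where the last step uses $\abs{\K\mathbf{p}}^2=\abs{\sqrt{\K}(\sqrt{\K}\mathbf{p})}^2\le\K[E]\abs{\sqrt{\K}\mathbf{p}}^2$ pointwise. Combining this with the defining bound $\tau_E\K[E]\le\tilde{C}_k^E h_E^2$ from \eqref{eq:deftau} yields $\tau_E\norm[E]{\div(\K\proj{k-1}{}\nabla w)}^2\le\norm[E]{\sqrt{\K}\proj{k-1}{}\nabla w}^2\le\ennorm[\K\beta,E]{w}^2$, the last inequality because this term appears in $\a[h]{w}{w}$. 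A discrete Cauchy--Schwarz over the elements then gives \eqref{eq:dh_continuity} with constant one.

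For the consistency estimate \eqref{eq:estim_d-dh}, the observation above reduces the difference to $\d{\proj{k-1}{}w}{v_h}-\d[h]{\proj{k-1}{}w}{v_h}=-\sum_{E\in\Th}\tau_E\scal[E]{\div(\K\nabla\proj{k-1}{}w)}{\beta\cdot(\nabla v_h-\proj{k-1}{}\nabla v_h)}$, in which only the projection error $\mathbf{q}:=\nabla v_h-\proj{k-1}{}\nabla v_h$, orthogonal to $[\Poly{k-1}{E}]^2$, survives. With $\phi:=\proj{k-1}{}w$ I would split the coefficient factor by the product rule, $\div(\K\nabla\phi)\,\beta_l=\div\big((\beta_l\K)\nabla\phi\big)-\big((\nabla\beta)^\intercal\K\nabla\phi\big)_l$, which crucially puts a derivative onto $\beta$ rather than onto $v_h$ and produces exactly the two coefficient combinations $\beta_l\K$ and $(\nabla\beta)^\intercal\K$ of \eqref{eq:defCNCd}. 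Exploiting the orthogonality of $\mathbf{q}$, I would subtract the elementwise-constant parts $\proj{0}{\beta_l\K}$ and $\proj{0}{(\nabla\beta)^\intercal\K}$ (whose contributions, being low-degree polynomials tested against $\mathbf{q}$, vanish) and then subtract $\proj{k-1}{}$ of the remaining factors, so that the approximation estimate \eqref{eq:estimP0k-1} applies to each piece: at order $s-1$ for the divergence term, yielding the factor $h_E^{-1}$ in the first entry of \eqref{eq:defCNCd}, and at order $s$ for the $(\nabla\beta)^\intercal\K$ term. Together with the Leibniz rule, $\norm[s+1,E]{\proj{k-1}{}w}\le\mathcal{C}\norm[s+1,E]{w}$, and the bound $\norm[E]{\mathbf{q}}\le(\sigma_\ast\Kmin[E])^{-1/2}\ennorm[\K\beta,E]{v_h}$ extracted from the stabilization lower bound \eqref{eq:vemstab_coercivity}, the prefactor $\tau_E$ reproduces $\mathcal{C}^{nc}_{d,E}\,h_E^{s}\norm[s+1,E]{w}\ennorm[\K\beta,E]{v_h}$; summing over $E$ and taking the maximum of the constants gives \eqref{eq:estim_d-dh}.

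The main obstacle is the algebraic bookkeeping in the consistency step: the constants in \eqref{eq:defCNCd} carry $\K$ multiplied by $\beta$ and by $\nabla\beta$ (not by $\nabla\K$ alone), so the product-rule splitting and the subtraction of the constant-coefficient projections must be performed \emph{before} any approximation estimate is invoked, and one may never integrate by parts onto $v_h$, whose second derivatives are uncontrolled --- only the $[\Poly{k-1}{E}]^2$-orthogonality of $\mathbf{q}$ may be used against the coefficient-carrying factor. Matching the two different powers of $h_E$ (the $h_E^{-1}$ attached to $\beta_l\K$ versus none for $(\nabla\beta)^\intercal\K$) forces the two pieces of \eqref{eq:estimP0k-1} to be applied at the two distinct orders dictated by how many derivatives of $\phi$ are retained in each.
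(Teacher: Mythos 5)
Your proposal is correct and follows essentially the same route as the paper: Cauchy--Schwarz together with the inverse inequality \eqref{eq:estimnormlapl} and the bound $\tau_E\K[E]\leq\tilde{C}_k^E h_E^2$ from \eqref{eq:deftau} for the continuity \eqref{eq:dh_continuity}, and for \eqref{eq:estim_d-dh} the identical product-rule splitting into the $\beta_i\K$ and $(\nabla\beta)^\intercal\K$ contributions followed by the same orthogonality insertions of $\proj{0}{}$ and $\proj{k-1}{}$ against $\nabla v_h-\proj{k-1}{}\nabla v_h$ before invoking \eqref{eq:estimP0k-1} at the two different orders. The only cosmetic deviations are that you obtain the $h_E^{-1}$ from the $\sobh{1}{}$-seminorm part of \eqref{eq:estimP0k-1} instead of the paper's inverse-type step $\norm[E]{\div(I-\proj{k-1}{})(\cdot)}\leq\mathcal{C}h_E^{-1}\norm[E]{(I-\proj{k-1}{})(\cdot)}$ (if anything, your version is the cleaner justification), and that you bound $\norm[E]{\nabla v_h-\proj{k-1}{}\nabla v_h}$ via the stabilization lower bound \eqref{eq:vemstab_coercivity} where the paper simply uses the contraction $\norm[E]{\nabla v_h-\proj{k-1}{}\nabla v_h}\leq\norm[E]{\nabla v_h}\leq\Kmin[E]^{-1/2}\ennorm[\K\beta]{v_h}$; both yield the same constants up to $\sigma_\ast$.
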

\begin{proof}
  To prove \eqref{eq:dh_continuity}, we use the inverse inequality
  \eqref{eq:estimnormlapl} and the definition of the norm
  \eqref{eq:defennormKbeta}: $\forall E\in\Th$,
  \begin{equation*}
    \begin{split}
      \tau_E\scal[E]{\nabla\cdot\left(\K\proj{k-1}{}\nabla
          w\right)}{\beta\cdot\proj{k-1}{}\nabla v_h} &\leq
      \sqrt{\tau_E} \norm[E]{\nabla\cdot\left(\K\proj{k-1}{}\nabla
          w\right)}
      \cdot\sqrt{\tau_E}\norm[E]{\beta\cdot\proj{k-1}{}\nabla v_h}
      \\
      &\leq \frac{1}{\sqrt{\K[E]}}\norm[E]{\K\proj{k-1}{}\nabla
        w}\ennorm[\K\beta]{v_h} \leq \ennorm[\K\beta]{w}
      \ennorm[\K\beta]{v_h} \,.
    \end{split}
  \end{equation*}
  Regarding \eqref{eq:estim_d-dh}, we procede as follows:
  $\forall E\in\Th$,
  \begin{equation*}
    \begin{split}
      &\tau_E\scal[E]{\nabla\cdot\left(\K \proj{k-1}{} \nabla
          w\right)}{\beta\cdot\left(\nabla v_h - \proj{k-1}{}\nabla
          v_h\right)} = \TERM{R}{E,1} + \TERM{R}{E,2}\,,
    \end{split}
  \end{equation*}
  where, with the notation $\mathcal{E}_{k-1} = I-\proj{k-1}{}$,
  \begin{align}
    \label{eq:estim_d-dh-TERM1}
    \TERM{R}{E,1} &= \tau_E\sum_{i=1}^d
                    \scal[E]{\nabla\cdot
                    \left( \beta_i\K \proj{k-1}{} \nabla w \right)}
                    {\mathcal{E}_{k-1}\left(\pder{v_h}{x_i}\right)} \,,
    \\
    \label{eq:estim_d-dh-TERM2}
    \TERM{R}{E,2} &= \tau_E \scal[E]{ -\left( \nabla \beta  \right)^\intercal
                    \K \proj{k-1}{} \nabla w}
                    {\mathcal{E}_{k-1}\left(\nabla v_h \right)} \,.
  \end{align}
  The term $\TERM{R}{E,1}$ in \eqref{eq:estim_b-bh-TERM1} can be
  estimated as follows, using \eqref{eq:estim_ab-Pab},
  \begin{equation*}
    \begin{split}
      \TERM{R}{E,1} &= \tau_E\sum_{i=1}^d \scal[E]{\nabla\cdot \left(
          \left(\beta_i\K - \proj{0}{\beta_i\K}\right) \proj{k-1}{}
          \nabla w \right)}
      {\mathcal{E}_{k-1}\left(\pder{v_h}{x_i}\right)}
      \\
      & = \tau_E\sum_{i=1}^d \scal[E]{\nabla\cdot \left(
          \mathcal{E}_{k-1}\left( \left(\beta_i\K -
              \proj{0}{\beta_i\K}\right) \proj{k-1}{} \nabla w \right)
        \right)} {\mathcal{E}_{k-1}\left(\pder{v_h}{x_i}\right)}
      \\
      & \leq \tau_E\sum_{i=1}^d \norm[E]{\nabla\cdot \left(
          \mathcal{E}_{k-1}\left( \left(\beta_i\K -
              \proj{0}{\beta_i\K}\right) \proj{k-1}{} \nabla w \right)
        \right)}
      \norm[E]{\mathcal{E}_{k-1}\left(\pder{v_h}{x_i}\right)}
      \\
      & \leq \mathcal{C} h^{-1}_E\tau_E
      \sum_{i=1}^d\norm[E]{\mathcal{E}_{k-1}\left( \left(\beta_i\K -
            \proj{0}{\beta_i\K}\right) \proj{k-1}{} \nabla w \right)}
      \norm[E]{\pder{v_h}{x_i}}
      \\
      & \leq \mathcal{C} h^{-1}_E\tau_E h_E^{s}
      \sum_{i=1}^d\seminorm[s,E]{\left(\beta_i\K -
          \proj{0}{\beta_i\K}\right) \proj{k-1}{} \nabla w}
      \norm[E]{\nabla v_h}
      \\
      & \leq \mathcal{C}h^{-1}_E\tau_E h_E^{s} \left(\sum_{i=1}^d
        \norm[\sob{s}{\infty}{E}]{\beta_i\K - \proj{0}{\beta_i\K}}
      \right) \norm[s,E]{\proj{k-1}{} \nabla w} \norm[E]{\nabla v_h}
      \\
      & \leq \mathcal{C} \frac{h^{-1}_E \tau_E \sum_{i=1}^d
        \norm[\sob{s}{\infty}{E}]{\beta_i\K -
          \proj{0}{\beta_i\K}}}{\sqrt{\Kmin[E]}} h^s_E \norm[s+1,E]{w}
      \ennorm[\K\beta,E]{v_h} \,.
    \end{split}
  \end{equation*}
  The term $\TERM{R}{E,2}$ in \eqref{eq:estim_b-bh-TERM2} can be
  estimated as follows, using also \eqref{eq:estim_ab-Pab}:
  \begin{equation*}
    \begin{split}
      \abs{\TERM{R}{E,2}} &= \tau_E \abs{\scal[E]{ \left( \nabla \beta
          \right)^\intercal \K \proj{k-1}{} \nabla w}
        {\mathcal{E}_{k-1}\left(\nabla v_h \right)}}
      \\
      &= \tau_E \abs{ \scal[E]{ \mathcal{E}_{k-1}\left( \left( \left(
                \nabla \beta \right)^\intercal \K - \proj{0}{\left(
                  \nabla \beta \right)^\intercal \K} \right)
            \proj{k-1}{} \nabla w \right)}{\nabla v_h } }
      \\
      &\leq \tau_E \norm[E]{ \mathcal{E}_{k-1}\left( \left( \left(
              \nabla \beta \right)^\intercal \K - \proj{0}{\left(
                \nabla \beta \right)^\intercal \K} \right)
          \proj{k-1}{} \nabla w \right)} \norm[E]{\nabla v_h }
      \\
      &\leq \mathcal{C}\tau_E h^{s}_E \seminorm[s,E]{ \left( \left(
            \nabla \beta \right)^\intercal \K - \proj{0}{\left( \nabla
              \beta \right)^\intercal \K} \right) \proj{k-1}{} \nabla
        w } \norm[E]{\nabla v_h}
      \\
      &\leq \mathcal{C} \frac{\tau_E \norm[\sob{s}{\infty}{E}]{(\nabla
          \beta)^\intercal \K - \proj{0}{\left( \nabla \beta
            \right)^\intercal \K} }}{\sqrt{\Kmin[E]}} h^s_E
      \norm[s+1,E]{w} \ennorm[\K\beta,E]{v_h} \,.
    \end{split}
  \end{equation*}  
\end{proof}
\begin{remark}
  The first argument of the max in \eqref{eq:defCNCd} can be bounded
  as follows, using \eqref{eq:estim_ab-Pab} and \eqref{eq:deftau}:
  \begin{equation*}
    \begin{split}
      & \frac{h^{-1}_E \tau_E \sum_{i=1}^d
        \norm[\sob{s}{\infty}{E}]{\beta_i\K -
          \proj{0}{\beta_i\K}}}{\sqrt{\Kmin[E]}} \leq
      \frac{3h_E^{-1}\tau_E}{2\sqrt{\Kmin[E]}} \left(
        2\norm[\sob{s}{\infty}{E}]{\K} \norm[\sob{s}{\infty}{E}]{\beta
          - \proj{0}{}\beta} \right.
      \\
      &\qquad \left. + 2\norm[\sob{s}{\infty}{E}]{\beta}
        \norm[\sob{s}{\infty}{E}]{\K - \proj{0}{}\K} \right)
      \\
      &\quad \leq \frac{\mathcal{C}}{\sqrt{\Kmin[E]}} \left( h_E
        \norm[\sob{s}{\infty}{E}]{\frac{\K}{\K[E]}}
        \norm[\sob{s}{\infty}{E}]{\beta - \proj{0}{}\beta} +
        \norm[\sob{s}{\infty}{E}]{\frac{\beta}{\beta_E}}
        \norm[\sob{s}{\infty}{E}]{\K - \proj{0}{}\K} \right) \,.
    \end{split}
  \end{equation*}
  Similarly, the second argument in \eqref{eq:defCNCd} can be
  bounded as follows:
  \begin{equation*}
    \begin{split}
      \frac{\tau_E \norm[\sob{s}{\infty}{E}]{(\nabla \beta)^\intercal
          \K - \proj{0}{\left( \nabla \beta \right)^\intercal \K}
        }}{\sqrt{\Kmin[E]}} &\leq \frac{\mathcal{C}}{\sqrt{\Kmin[E]}}
      \left( h^2_E \norm[\sob{s}{\infty}{E}]{\frac{\K}{\K[E]}}
        \norm[\sob{s}{\infty}{E}]{\nabla \beta} \right.
      \\
      &\left. \quad + h_E
        \norm[\sob{s}{\infty}{E}]{\frac{\nabla\beta}{\beta_E}}
        \norm[\sob{s}{\infty}{E}]{\K - \proj{0}{}\K} \right) \,.
    \end{split}
  \end{equation*}
\end{remark}

Finally, the following Lemma states the continuity of $\Bsupg{}{}$,
defined by \eqref{eq:defBsupg}.
\begin{lemma}
  \label{lem:Bsupg_continuity}
  Let $w\in\sobnc{k}{\Th}\cap \sobh{2}{\Th}$ and $v_h\in V_h$. Then,
  \begin{equation}
    \label{eq:Bsupg_continuity}
    \begin{split}
      \Bsupg{w}{v_h} \leq \mathcal{C} \left[ \max_{E\in\Th} \left(
          \tau_E^{-\frac12},h_E^{-1}\mathcal{K}^{nc}_{b,E} \right)
        \norm{w} + \frac{\sqrt{\K[E]}}{\sqrt{\Kmin[E]}}
        \ennorm[\K\beta\gamma]{w} + \max_{E\in\Th}
        \mathcal{K}^{nc}_{b,E} \norm{\nabla w} \right.
      \\
      \left. + \max_{E\in\Th} \left( \sqrt{\tau_E}
          \norm[\sob{1}{\infty}{E}]{\K} \right)
        \norm[2,E]{w-\proj{k-1}{}w} \right]
      \ennorm[\K\beta\gamma,E]{v_h} \,.
    \end{split}
  \end{equation}
\end{lemma}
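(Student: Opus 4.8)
The plan is to split $\Bsupg{w}{v_h}=\a{w}{v_h}+\b{w}{v_h}+\c{w}{v_h}+\d{w}{v_h}$ according to \eqref{eq:defa}--\eqref{eq:defd}, to estimate each form element by element, and to collect the local contributions at the very end by taking maxima of the coefficients and using the finite overlap of the patches $\omega_E$ granted by mesh regularity $(ii)$. The two cheap forms are $\a$ and $\c$. For $\a$ I would use the reformulation $\a{w}{v_h}=\sum_{E\in\Th}\scal[E]{\K[\beta,E]\nabla w}{\nabla v_h}$ of Remark~\ref{remark:defa:alt} together with the Cauchy--Schwarz bound \eqref{eq:a:continuity}, noting that $\norm[E]{\sqrt{\K[\beta,E]}\nabla w}^2=\a{w}{w}|_E\le\ennorm[\K\beta,E]{w}^2$, so that $\a{w}{v_h}\le\ennorm[\K\beta]{w}\ennorm[\K\beta]{v_h}$, which is absorbed into the $\tfrac{\sqrt{\K[E]}}{\sqrt{\Kmin[E]}}\ennorm[\K\beta\gamma]{w}$ term. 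For $\c$ I would split into $\scal[E]{\gamma w}{v_h}$ and $\tau_E\scal[E]{\gamma w}{\beta\cdot\nabla v_h}$ and argue as in \eqref{eq:ch_continuity}, using $\tau_E\gamma_E\le C_\tau$ from \eqref{eq:deftau} so that the stabilized reaction carries only a harmless $\sqrt{C_\tau}$; the ratio constant $\sqrt{\K[E]}/\sqrt{\Kmin[E]}$ appears here when the low-order $L^2$-mass of $v_h$ is controlled by its diffusive energy.

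The convective form $\b$ is treated by symmetrisation, and this is where the nonconforming structure enters. Since $\div\beta=0$, integrating by parts in $\tfrac12\scal[E]{\beta\cdot\nabla w}{v_h}$ moves the derivative off $w$ (so that only $\norm[E]{w}$, which the norm controls, is needed rather than the uncontrolled $\norm[E]{v_h}$), giving $\b{w}{v_h}=-\sum_{E\in\Th}\scal[E]{w}{\beta\cdot\nabla v_h}+\tfrac12\sum_{E\in\Th}\int_{\partial E}(\beta\cdot\n{})\jmp{w v_h}$. The volume term is bounded by $\tau_E^{-\frac12}\norm[E]{w}\ennorm[\K\beta,E]{v_h}$. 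The interface term is exactly the quantity $\TERM{T}{E,3}$ from the proof of Lemma~\ref{lem:estim_b-bh}: I would insert the edge projections $\proj[0,\partial E]{k-1}{}$, exploit that both $w$ and $v_h$ lie in $\sobnc{k}{\Th}$ (so the moments of their jumps up to order $k-1$ vanish) together with $\int_{\partial E}\beta\cdot\n{}=0$, and then apply \eqref{eq:estim:trace:Pnablak}. Splitting the trace of $w$ by a trace inequality into an $h_E^{-\frac12}\norm[E]{w}$ part and an $h_E^{\frac12}\norm[E]{\nabla w}$ part yields precisely the $h_E^{-1}\mathcal{K}^{nc}_{b,E}\norm{w}$ and $\mathcal{K}^{nc}_{b,E}\norm{\nabla w}$ contributions.

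The SUPG diffusion residual $\d$ is the main obstacle. Cauchy--Schwarz gives $|\d{w}{v_h}|\le\sum_{E\in\Th}\sqrt{\tau_E}\norm[E]{\div(\K\nabla w)}\cdot\sqrt{\tau_E}\norm[E]{\beta\cdot\nabla v_h}$, and the second factor is dominated by $\ennorm[\K\beta,E]{v_h}$ directly from the definition \eqref{eq:defennormKbeta} of the norm. The difficulty is the first factor: unlike the discrete form of Lemma~\ref{lem:estim_d-dh}, where the inverse inequality \eqref{eq:estimnormlapl} applies because the argument is a virtual function, here $w$ is a generic $\sobh{2}{\Th}$ function and no inverse estimate is available. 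The way out is to exploit consistency by subtracting the polynomial $\proj{k-1}{}w$: its contribution pairs against $\beta\cdot\nabla v_h$ in an $L^2$-orthogonal fashion and can be reabsorbed, reducing the estimate to $\norm[E]{\div(\K\nabla(w-\proj{k-1}{}w))}$, which the product rule bounds by $\norm[\sob{1}{\infty}{E}]{\K}\norm[2,E]{w-\proj{k-1}{}w}$. This produces the last term $\sqrt{\tau_E}\norm[\sob{1}{\infty}{E}]{\K}\norm[2,E]{w-\proj{k-1}{}w}$. Making this reduction clean --- i.e.\ justifying that the polynomial part does not spoil the order and that the $\tau_E$ weight is exactly the one tying the residual to the norm through \eqref{eq:deftau} --- is the delicate point of the whole lemma.

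Finally, I would sum the four element-wise bounds, use the uniform bound on the number of elements in each patch $\omega_E$ to pass from the local $\ennorm[\K\beta\gamma,\omega_E]{v_h}$ back to a global factor up to a mesh-independent constant, and replace each element coefficient by its maximum over $\Th$, thereby recovering the stated form \eqref{eq:Bsupg_continuity}.
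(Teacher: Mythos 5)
Your overall architecture coincides with the paper's: the continuity of $\a{}{}$, $\b{}{}$ and $\c{}{}$ is referred back to the arguments of Lemmas~\ref{lem:estim_a-ah}, \ref{lem:estim_b-bh} and \ref{lem:estim_c-ch} (your symmetrization of $\b{}{}$ and the jump-orthogonality/trace treatment producing the $h_E^{-1}\mathcal{K}^{nc}_{b,E}\norm{w}$ and $\mathcal{K}^{nc}_{b,E}\norm{\nabla w}$ contributions is exactly the paper's mechanism), and $\d{}{}$ is treated separately by subtracting $\proj{k-1}{}w$, with the residual bounded through the product rule by $\sqrt{\tau_E}\norm[\sob{1}{\infty}{E}]{\K}\norm[2,E]{w-\proj{k-1}{}w}$, again precisely as in the paper.

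However, your handling of the polynomial part of $\d{}{}$ contains a genuine gap. The term $\tau_E\scal[E]{\div\left(\K\nabla\proj{k-1}{}w\right)}{\beta\cdot\nabla v_h}$ is not ``$L^2$-orthogonal'' to anything and cannot be reabsorbed: for variable $\K$ the divergence is a generic $\lebl{E}$ function, and even for constant $\K$ it is a nonzero polynomial of degree up to $k-3$ with no orthogonality against $\beta\cdot\nabla v_h$. The paper keeps this term and closes the estimate with the one idea your proposal is missing: since $\proj{k-1}{}w\in\Poly{k-1}{E}\subset V_h^E$, the inverse inequality \eqref{eq:estimnormlapl} applies to it, and combined with the first branch of the definition \eqref{eq:deftau}, namely $\tau_E\leq \tilde{C}^E_k h^2_E\slash\K[E]$, one obtains $\sqrt{\tau_E}\norm[E]{\div\left(\K\nabla\proj{k-1}{}w\right)}\leq \K[E]^{-\frac12}\norm[E]{\K\nabla\proj{k-1}{}w}\leq \mathcal{C}\,\frac{\sqrt{\K[E]}}{\sqrt{\Kmin[E]}}\norm[E]{\sqrt{\K}\nabla w}$; pairing this with $\sqrt{\tau_E}\norm[E]{\beta\cdot\nabla v_h}\leq\ennorm[\K\beta,E]{v_h}$ yields exactly the second term $\frac{\sqrt{\K[E]}}{\sqrt{\Kmin[E]}}\ennorm[\K\beta\gamma]{w}$ in \eqref{eq:Bsupg_continuity}. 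Note this term cannot disappear from the final bound: the norm \eqref{eq:defennormKbetagamma} contains no second derivatives, so if the polynomial contribution really vanished the statement would not carry it. You correctly flagged this step as the delicate point of the lemma, but the mechanism you propose for it would fail; the inverse inequality on the polynomial projection, activated by the $\tau_E$ weight through \eqref{eq:deftau}, is the missing ingredient.
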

\begin{proof}
  The proof of the continuity of $\a{}{}$, $\b{}{}$ and $\c{}{}$
  follows the same arguments of Lemmas \eqref{lem:estim_a-ah},
  \eqref{lem:estim_b-bh} and \eqref{lem:estim_c-ch}. The proof of the
  continuity of $\d{}{}$ is slightly different, and can be done as
  follows:
  \begin{equation*}
    \begin{split}
      & \tau_E\scal[E]{\nabla\cdot\left(\K\nabla
          w\right)}{\beta\cdot\nabla v_h} =
      \tau_E\scal[E]{\nabla\cdot\left(\K\nabla
          w-\K\nabla \proj{k-1}{}w\right)}{\beta\cdot\nabla v_h}
      \\
      &\qquad + \tau_E\scal[E]{\nabla\cdot\left(\K\nabla
          \proj{k-1}{}w\right)}{\beta\cdot\nabla v_h}
      \\
      &\quad \leq \tau_E\norm[E]{\nabla\cdot\left(\K\nabla\left(
            w-\proj{k-1}{}w\right)\right)} \norm[E]{\beta\cdot\nabla
        v_h} + \tau_E\norm[E]{\nabla\cdot\left(\K\nabla
          \proj{k-1}{}w\right)} \norm[E]{\beta\cdot\nabla v_h}
      \\
      &\quad \leq \left(\sqrt{\tau_E} \norm[E]{ \left( \nabla\cdot\K
          \right) \nabla\left( w-\proj{k-1}{}w \right)} +
        \sqrt{\tau_E} \norm[E]{ \K \Delta \left( w-\proj{k-1}{}w
          \right)} \right.
      \\
      &\qquad \left. + \norm[E]{\sqrt{\K}\proj{k-1}{}\nabla w} \right)
      \sqrt{\tau_E}\norm[E]{\beta\cdot\nabla v_h}
      \\
      &\quad \leq \mathcal{C} \left( \sqrt{\tau_E}
        \norm[\sob{1}{\infty}{E}]{\K} \norm[2,E]{w-\proj{k-1}{}w} +
        \frac{\sqrt{\K[E]}}{\sqrt{\Kmin[E]}} \norm[E]{\sqrt{\K} \nabla
          w}\right) \ennorm[\K\beta]{w} \,.
    \end{split}
  \end{equation*}
\end{proof}

The above lemmas can be summarized in the following lemma, estimating
the error of approximation of the exact bilinear form by the discrete
bilinear form.
\begin{lemma}
  For any given $w\in\sobnc{k}{\Th} \cap \sobh{s+1}{\Th}$ and any
  $v_h\in V_h$,
  \begin{equation}
    \label{eq:estim_Bsupg-Bsupgh}
    \begin{split}
      \abs{\Bsupg{w}{v_h} - \Bsupg[h]{w}{v_h}} &\leq
      \mathcal{C}\max_{E\in\Th} \left\{
        \frac{\norm[\sob{1}{\infty}{E}]{\K}}{\sqrt{\K[E]}},
        \sqrt{h_E\beta_E}, h_E\sqrt{\gamma_E}, \right.
      \\
      &\left. \qquad \mathcal{C}^{nc}_{a,E} , \mathcal{C}^{nc}_{b,E} ,
        \mathcal{C}^{nc}_{c,E} , \mathcal{C}^{nc}_{d,E} ,
        \mathcal{K}^{nc}_{b,E} \right\} h^s \seminorm[s+1]{w}
      \ennorm[\K\beta]{v_h}\,,
    \end{split}
  \end{equation}
  where $\mathcal{C}^{nc}_{a,E}$ , $\mathcal{C}^{nc}_{b,E}$ ,
  $\mathcal{C}^{nc}_{c,E}$, $\mathcal{C}^{nc}_{d,E}$ and
  $\mathcal{K}^{nc}_{b,E}$ are defined by \eqref{eq:defCNCa},
  \eqref{eq:defCNCb}, \eqref{eq:defCNCc}, \eqref{eq:defCNCd} and
  \eqref{eq:defKNCb}.
\end{lemma}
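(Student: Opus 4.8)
The plan is to compare the two forms through the elementwise $L^2$-projection $\proj{k-1}{}w$, used as a polynomial pivot on each element. Writing $\Bsupg{w}{v_h}-\Bsupg[h]{w}{v_h}$ as the sum over the four constituent forms $\a,\b,\c,\d$, I would insert $\proj{k-1}{}w$ into each, so that every form difference $X-X_h$ splits into a \emph{consistency part} $X(\proj{k-1}{}w,v_h)-X_h(\proj{k-1}{}w,v_h)$ with polynomial first argument, plus \emph{continuity parts} involving only the increment $w-\proj{k-1}{}w$ (on the continuous side $X(w,v_h)-X(\proj{k-1}{}w,v_h)$, on the discrete side $X_h(\proj{k-1}{}w,v_h)-X_h(w,v_h)$). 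The consistency parts are already available: summing \eqref{eq:estim_a-ah}, \eqref{eq:estim_b-bh}, \eqref{eq:estim_c-ch} and \eqref{eq:estim_d-dh} produces exactly the coefficients $\mathcal{C}^{nc}_{a,E},\mathcal{C}^{nc}_{b,E},\mathcal{C}^{nc}_{c,E},\mathcal{C}^{nc}_{d,E}$ times $h^s\seminorm[s+1]{w}\ennorm[\K\beta]{v_h}$, which supplies four of the entries of the maximum.

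For the continuity parts of $\a$, $\c$, $\d$ and the volumetric part of $\b$, I would use the continuous bound of Lemma~\ref{lem:Bsupg_continuity} and the discrete bounds \eqref{eq:ah_continuity}, \eqref{eq:ch_continuity}, \eqref{eq:dh_continuity} on the increment $w-\proj{k-1}{}w$, then convert the resulting norms into powers of $h$ via \eqref{eq:estimP0k-1} (and \eqref{eq:estim:trace:Pnablak} on edges). The decisive bookkeeping is that $\tau_E^{-\frac12}\norm[E]{w-\proj{k-1}{}w}\leq\mathcal{C}\,(h_E/\sqrt{\tau_E})\,h_E^{s}\seminorm[s+1,E]{w}$, and by the definition \eqref{eq:deftau} one has, up to constants, $h_E/\sqrt{\tau_E}=\max\{\sqrt{\K[E]}/\sqrt{\tilde{C}^E_k},\sqrt{2h_E\beta_E},h_E\sqrt{\gamma_E/C_\tau}\}$. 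The three regimes of the SUPG parameter thus reproduce precisely $\sqrt{h_E\beta_E}$, $h_E\sqrt{\gamma_E}$ and, after the elementary bound $\sqrt{\K[E]}\leq\norm[\sob{1}{\infty}{E}]{\K}/\sqrt{\K[E]}$, the factor $\norm[\sob{1}{\infty}{E}]{\K}/\sqrt{\K[E]}$ of the statement; the $\gamma$-dependent factors on the $v_h$-side collapse to $\ennorm[\K\beta]{v_h}$ using $\tau_E\gamma_E\leq C_\tau$, in agreement with the consistency lemmas.

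The main obstacle is the interaction of nonconformity with the two awkward forms. For the convective form $\b$ the cancellation of the interelement boundary terms rests simultaneously on the vanishing edge moments of $\jmp{w}$ (i.e. $w\in\sobnc{k}{\Th}$) and on $\int_{\partial E}\beta\cdot\n{}=\int_E\div\beta=0$; since $\proj{k-1}{}w$ is \emph{not} nonconforming, the pivot cannot be inserted blindly in $\b$. The boundary contributions must therefore be kept attached to the genuine $w$ and processed as in the proof of Lemma~\ref{lem:estim_b-bh} (insertion of edge projections using the vanishing jump moments of both $w$ and $v_h$, followed by a trace estimate), which is what generates the coefficient $\mathcal{K}^{nc}_{b,E}$. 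For the SUPG diffusion form $\d$, the quantity $\div(\K\nabla w)$ is not controlled by the energy norm, so I would split $w=\proj{k-1}{}w+(w-\proj{k-1}{}w)$, bound the polynomial part through the inverse inequality \eqref{eq:estimnormlapl} exactly as in Lemma~\ref{lem:estim_d-dh}, and bound the remainder by \eqref{eq:estimP0k-1} applied to second derivatives; this is the origin of the term $\sqrt{\tau_E}\norm[\sob{1}{\infty}{E}]{\K}\norm[2,E]{w-\proj{k-1}{}w}$ and hence, again, of the coefficient $\norm[\sob{1}{\infty}{E}]{\K}/\sqrt{\K[E]}$. Collecting all contributions, taking the maximum over $E\in\Th$ and factoring out $h^s\seminorm[s+1]{w}\ennorm[\K\beta]{v_h}$ yields \eqref{eq:estim_Bsupg-Bsupgh}.
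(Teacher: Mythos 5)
Your proposal is correct and takes essentially the same route as the paper's proof: the identical three-term splitting through the pivot $\proj{k-1}{}w$, with the consistency estimates \eqref{eq:estim_a-ah}, \eqref{eq:estim_b-bh}, \eqref{eq:estim_c-ch}, \eqref{eq:estim_d-dh} on the polynomial part, the continuity bounds (Lemma~\ref{lem:Bsupg_continuity} on the continuous side, \eqref{eq:ah_continuity}--\eqref{eq:dh_continuity} on the discrete side) applied to $w-\proj{k-1}{}w$, and the same $\tau_E$-bookkeeping via \eqref{eq:deftau} producing the factors $\sqrt{h_E\beta_E}$, $h_E\sqrt{\gamma_E}$ and $\norm[\sob{1}{\infty}{E}]{\K}\slash\sqrt{\K[E]}$. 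Your additional remarks on the boundary terms in $\b{}{}$ and on $\div\left(\K\nabla w\right)$ in $\d{}{}$ merely make explicit what the paper delegates to Lemmas~\ref{lem:estim_b-bh}, \ref{lem:estim_d-dh} and \ref{lem:Bsupg_continuity}.
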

\begin{proof}
  Collecting the results of Lemmas \ref{lem:estim_a-ah},
  \ref{lem:estim_b-bh}, \ref{lem:estim_c-ch}, \ref{lem:estim_d-dh} and
  \ref{lem:Bsupg_continuity} and the approximation estimates on the
  polynomial projections, we get
  \begin{equation*}
    \begin{split}
      &\abs{\Bsupg{w}{v_h} - \Bsupg[h]{w}{v_h}} \leq \abs{
        \Bsupg{w-\proj{k-1}{}w}{v_h} } + \abs{
        \Bsupg[h]{w-\proj{k-1}{} w}{v_h} }
      \\
      & \qquad + \abs{ \Bsupg{\proj{k-1}{}w}{v_h} -
        \Bsupg[h]{\proj{k-1}{}w}{v_h} }
      \\
      &\quad \leq \mathcal{C}\left( \max_{E\in\Th} \left(
          \frac{\sqrt{\K[E]}}{\sqrt{\Kmin[E]}}
        \right)\ennorm[\K\beta\gamma]{w-\proj{k-1}{}w}
         \right.
      \\
      & \left. \qquad + \max_{E\in\Th} \left( \sqrt{\tau_E}
          \norm[\sob{1}{\infty}{E}]{\K} \right)
        \norm[2,E]{w-\proj{k-1}{}w} + \max_{E\in\Th} \tau_E^{-\frac12}
        \norm{w - \proj{k-1}{}w} \right.
      \\
      &\qquad \left. + \max_{E\in\Th} \left\{ \mathcal{C}^{nc}_{a,E} ,
          \mathcal{C}^{nc,s+1}_{b,E} , \mathcal{C}^{nc}_{c,E} ,
          \mathcal{C}^{nc}_{d,E} , \mathcal{K}^{nc}_{b,E} \right\} h^s
        \seminorm[s+1]{w} \right) \ennorm[\K\beta]{v_h} 
      \\
      &\quad \leq \mathcal{C}\max_{E\in\Th} \left\{
        \frac{\norm[\sob{1}{\infty}{E}]{\K}}{\sqrt{\K[E]}},
        \sqrt{h_E\beta_E}, h_E\sqrt{\gamma_E}, \mathcal{C}^{nc}_{a,E}
        , \mathcal{C}^{nc,s+1}_{b,E} , \mathcal{C}^{nc}_{c,E} ,
        \mathcal{C}^{nc}_{d,E} , \mathcal{K}^{nc}_{b,E} \right\} h^s
      \seminorm[s+1]{w} \ennorm[\K\beta]{v_h} .
    \end{split}
  \end{equation*}
\end{proof}

Due to the non-conformity of our approach and since the functions in
the global virtual element space $V_h$ may be discontinuous, for the
exact solution $u\in\sobh{2}{\Th}\cap \sobh[0]{1}{\Omega}$ and every
$v_h\in V_h$ it holds that
\begin{equation*}
  \Bsupg{u}{v_h} = \Fsupg{v_h} + \Nh{u}{v_h},
\end{equation*}
where
\begin{equation}
  \label{eq:Nh:def}
  \Nh{u}{v_h} \defeq \sum_{E\in\Th} \scal[\partial E]{(\K\nabla{u})\cdot\n{}-
    \frac12 \left( \beta\cdot\n{}  \right)u}{v_h}\,.
\end{equation}
is called the \emph{conformity error}.
This term is a generalization of the one of the pure diffusion problem
that is introduced and estimated in \cite[Lemma
4.1]{AyusodeDios-Lipnikov-Manzini:2016}.
\begin{lemma}[\textbf{Conformity error}]
  \label{lemma:estim_Nh}
  Let $u\in\sobh{s+1}{\Th} \cap \sobh[0]{1}{\Omega}$, $1\leq s\leq k$,
  be the solution of the variational problem~\eqref{eq:exvarform}. Let
  $\beta\in\sob{s+1}{\infty}{\Omega}$ and suppose
  $\K\nabla u\in\sobh{}{\mathrm{div},\Omega}$.
  Under the mesh regularity assumptions of Section \ref{sec:mesh}, for
  every $v_h\in V_h$ it holds that
  \begin{equation}
    \label{eq:estim_Nh}
    \abs{\Nh{u}{v_h}}  \leq \mathcal{C}
    \max_{E\in\Th}\left\{\frac{\norm[\sob{s}{\infty}{E}]{\K}}
      {\sqrt{\Kmin[E]}} , \mathcal{K}^{nc}_{\mathcal{N},E}
    \right\} h^{s} \norm[s+1]{u} \ennorm[\K\beta\gamma]{v_h} \,,
  \end{equation}
  where
  \begin{equation}
    \label{eq:defCNCNh}
    \mathcal{K}^{nc}_{\mathcal{N},E} =  \frac{h_E
      \norm[\sob{s+\frac12}{\infty}{\partial E}]{\beta\cdot\n{}}
    }{\sqrt{\Kmin[E]}}  \,.
  \end{equation}
\end{lemma}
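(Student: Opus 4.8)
The plan is to recast $\Nh{u}{v_h}$ as a sum of edge integrals weighted by the jumps $\jmp{v_h}$, and then to exploit the defining orthogonality of the nonconforming space $\sobnc{k}{\Th}$ to insert edge polynomial projections, so that the trace approximation estimate \eqref{eq:estim:trace:Pnablak} can be brought to bear. First I would rewrite the conformity error as an edge sum. Since $u\in\sobh[0]{1}{\Omega}$ is single valued across interfaces and $\K\nabla u\in\sobh{}{\mathrm{div},\Omega}$, the normal flux $(\K\nabla u)\cdot\n{e}$ is continuous across each internal edge, while $u=0$ on $\partial\Omega$. Collecting the two contributions of each internal edge $e=\partial E^+\cap\partial E^-$ (whose outward normals are opposite) and using the convention $\jmp{v_h}=v_h^+$ on $\Sh^{bnd}$, one obtains
\begin{equation*}
  \Nh{u}{v_h} = \sum_{e\in\Sh}\int_e\Big[(\K\nabla u)\cdot\n{e} - \tfrac12\left(\beta\cdot\n{e}\right)u\Big]\,\jmp{v_h}\,\dxi \,,
\end{equation*}
where on boundary edges the convective part drops out because $u=0$.

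Next, on each edge I would use that $\int_e \jmp{v_h}\,q\,\dxi=0$ for all $q\in\Poly{k-1}{e}$, equivalently $\proj[0,e]{k-1}{\jmp{v_h}}=0$, so that for either integrand $g_e$,
\begin{equation*}
  \int_e g_e\,\jmp{v_h}\,\dxi = \int_e\left(g_e - \proj[0,e]{k-1}{g_e}\right)\jmp{v_h}\,\dxi
  \leq \norm[e]{g_e - \proj[0,e]{k-1}{g_e}}\,\norm[e]{\jmp{v_h}}\,.
\end{equation*}
The jump factor is controlled by writing $\jmp{v_h}=\big(v_h^+-\proj[0,e]{k-1}{v_h^+}\big)-\big(v_h^--\proj[0,e]{k-1}{v_h^-}\big)$ (again using $\proj[0,e]{k-1}{\jmp{v_h}}=0$) and applying a scaled trace inequality together with a Poincar\'e estimate on each element, which yields $\norm[e]{\jmp{v_h}}\leq\mathcal{C}h_E^{1/2}\norm[\omega_e]{\nabla v_h}$ with $\omega_e=E^+\cup E^-$. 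The diffusive integrand is then treated with \eqref{eq:estim:trace:Pnablak} applied to $\psi=(\K\nabla u)\cdot\n{e}$, namely $\norm[e]{\psi-\proj[0,e]{k-1}{\psi}}\leq\mathcal{C}h_E^{s-1/2}\seminorm[s,\omega_e]{\psi}$, together with the product bound $\seminorm[s,\omega_e]{\psi}\leq\mathcal{C}\norm[\sob{s}{\infty}{E}]{\K}\seminorm[s+1,\omega_e]{u}$ (distributing the $s$ derivatives between $\K$ and $\nabla u$, cf. Lemma~\ref{lem:estim_ab-Pab}). Multiplying the two estimates and bounding $\norm[\omega_e]{\nabla v_h}\leq\ennorm[\K\beta,\omega_e]{v_h}\slash\sqrt{\Kmin[E]}$ produces precisely the coefficient $\norm[\sob{s}{\infty}{E}]{\K}\slash\sqrt{\Kmin[E]}$.

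The main obstacle is the convective integrand $\tfrac12(\beta\cdot\n{e})u$: it is a product whose \emph{edge} regularity must be split between the multiplier $\beta\cdot\n{e}$ and the trace of $u$. Proceeding as for the boundary term in the proof of Lemma~\ref{lem:estim_b-bh}, I would bound $\norm[e]{(\beta\cdot\n{e})u - \proj[0,e]{k-1}{(\beta\cdot\n{e})u}}$ by a product/trace approximation estimate of order $h_E^{s+1/2}$ whose constant is the fractional boundary norm $\norm[\sob{s+\frac12}{\infty}{\partial E}]{\beta\cdot\n{}}$ times $\seminorm[s+1,\omega_e]{u}$; combined with the jump bound and the factor $1\slash\sqrt{\Kmin[E]}$ this gives exactly $\mathcal{K}^{nc}_{\mathcal{N},E}\,h_E^{s}\seminorm[s+1,\omega_e]{u}\,\ennorm[\K\beta,\omega_e]{v_h}$. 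The delicate point here is the half-power bookkeeping: unlike the diffusive flux, the undifferentiated $u$ forces the trace estimate at regularity index $s+1$, so the degree $k-1$ of the edge projection is tight against the $\sobh{s+1}{}$ regularity of $u$ at the endpoint $s=k$ (this is exactly the range restriction $s+1\le k$ of \eqref{eq:estim:trace:Pnablak}), and the fractional norm $\norm[\sob{s+\frac12}{\infty}{\partial E}]{\beta\cdot\n{}}$ is what recovers the extra $h_E^{1/2}$.

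Finally I would sum over $e\in\Sh$, apply the Cauchy--Schwarz inequality edge by edge, and use the finite overlap of the patches $\omega_e$ guaranteed by the mesh regularity of Section~\ref{sec:mesh} to obtain $\sum_{e}\seminorm[s+1,\omega_e]{u}^2\leq\mathcal{C}\norm[s+1]{u}^2$ and $\sum_{e}\ennorm[\K\beta,\omega_e]{v_h}^2\leq\mathcal{C}\ennorm[\K\beta]{v_h}^2$. Pulling the edge coefficients out as a maximum over $E\in\Th$ and using $\ennorm[\K\beta]{v_h}\leq\ennorm[\K\beta\gamma]{v_h}$ then yields \eqref{eq:estim_Nh} with the stated coefficients $\norm[\sob{s}{\infty}{E}]{\K}\slash\sqrt{\Kmin[E]}$ and $\mathcal{K}^{nc}_{\mathcal{N},E}$.
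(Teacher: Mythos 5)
Your proposal is correct and follows essentially the same route as the paper's proof: you rewrite $\Nh{u}{v_h}$ as edge integrals against $\jmp{v_h}$, use the orthogonality $\int_e\jmp{v_h}\,q\,\dxi=0$ for $q\in\Poly{k-1}{e}$ to insert edge polynomial projections of both the integrand and the jump, bound $\norm[e]{\jmp{v_h}}\leq\mathcal{C}h_E^{1/2}\norm[\omega_e]{\nabla v_h}$, and apply the integer-order trace approximation to $(\K\nabla u)\cdot\n{}$ and the fractional order-$(s+\frac12)$ estimate to $(\beta\cdot\n{})u$ before summing over edges with finite overlap and converting to the energy norm via $\sqrt{\Kmin[E]}$. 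The only differences are cosmetic (you apply Cauchy--Schwarz before rewriting the jump and project the diffusive flux directly on the edge, where the paper inserts the elementwise projection $\proj{k-1}{\K\nabla u}$ first), and your endpoint remark about the half-power bookkeeping at $s=k$ matches, rather than worsens, the paper's own intermediate estimate, whose slack is absorbed by the extra factor $h_E$ inside $\mathcal{K}^{nc}_{\mathcal{N},E}$.
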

\begin{proof}
  The first term in \eqref{eq:Nh:def} is bounded following \cite[Lemma
  4.1]{AyusodeDios-Lipnikov-Manzini:2016}, using the fact that, by
  hypothesis, $\K\nabla u\cdot\n{}$ is continuous:
  \begin{equation*}
    \begin{split}
      \sum_{E\in\Th}\scal[\partial E]{(\K\nabla u)\cdot\n{}}{v_h} & =
      \sum_{e\in\Eh} \scal[e]{(\K\nabla u)\cdot\n{}}{\jmp{v_h}} =
      \sum_{e\in\Eh} \scal[e]{(\K\nabla u - \proj{k-1}{\K\nabla
          u})\cdot\n{}}{\jmp{v_h}}
      \\
      &\leq \sum_{e\in\Eh} \norm[e]{(\K\nabla u - \proj{k-1}{\K\nabla
          u})\cdot\n{}} \norm[e]{\jmp{v_h - \proj[0,e]{k-1}{}v_h}}
      \\
      &\leq \sum_{e\in\Eh} \mathcal{C}h_e^{s-\frac12}
      \seminorm[s,\omega_e]{\K\nabla u}\cdot h^{\frac12}_e
      \norm[\omega_e]{\nabla v_h}
      \\
      &\leq \frac{\norm[\sob{s}{\infty}{E}]{\K}}{\sqrt{\Kmin[E]}}
      h^s_E \norm[s+1,\omega_e]{u} \ennorm[\omega_e]{v_h}\,.
    \end{split}
  \end{equation*}
  The second term in \eqref{eq:Nh:def} is estimated using the fact
  that $v_h\in\sobnc{k}{\Th}$. Denoting by
  $\proj[0,\partial E]{k-1}{}v_h$ the piecewise polynomial projection
  of $v_h$ on each $e\subset \partial E$ and since
  $\jmp{\proj[0,e]{k-1}{}v_h}_{e} =
  \proj[0,e]{k-1}{\jmp{v_h}_{e}} =0$ $\forall e \subset \partial E$
  because $\int_{e}\jmp{v_h} q = 0$ $\forall q\in\Poly{k-1}{e}$, and
  since $(\beta\cdot \n{})u$ is continuous across the edges being
  $\beta$ a divergence-free vector and $u\in\sobh[0]{1}{\Omega}$, we
  get
  \begin{equation*}
    \begin{split}
      \sum_{E\in\Th}\scal[\partial E]{(\beta\cdot \n{})u}{v_h} &=
      \sum_{E\in\Th} \scal[\partial E]{ \left(\beta \cdot
          \n{}\right)u}{v_h} = \frac12 \sum_{E\in\Th} \scal[\partial
      E]{\left( \beta \cdot \n{}\right)u}{\jmp{v_h} }
      \\
      &= \frac12 \sum_{E\in\Th} \scal[\partial E]{\left( \beta \cdot
          \n{}\right)u}{\jmp{v_h - \proj[0,\partial E]{k-1}{} v_h} }
      \\
      & = \frac12 \sum_{E\in\Th} \scal[\partial E]{\left( \beta \cdot
          \n{}\right)u - \proj[0,\partial E]{k-1}{\left( \beta \cdot
            \n{}\right)u}}{\jmp{v_h - \proj[0,\partial E]{k-1}{} v_h}
      }
      \\
      & \leq \frac12 \sum_{E\in\Th} \norm[\partial E]{\left( \beta
          \cdot \n{}\right)u - \proj[0,\partial E]{k-1}{\left( \beta
            \cdot \n{}\right)u}} \norm[\partial E]{\jmp{v_h -
          \proj[0,\partial E]{k-1}{} v_h} }
      \\
      & \leq \mathcal{C} \sum_{E\in\Th} h^{s+\frac12}_E
      \seminorm[s+\frac12,\partial E]{\left(\beta \cdot \n{}\right) u}
      \cdot h_E^{\frac12} \norm[\omega_E]{\nabla v_h}
      \\
      & \leq \mathcal{C} \sum_{E\in\Th} h^{s+1}_E
      \frac{\norm[\sob{s+\frac12}{\infty}{\partial E}]{\beta \cdot
          \n{}}}{\sqrt{\Kmin[E]}} \norm[s+1,\omega_E]{u}
      \ennorm[\K\beta,\omega_E]{v_h} \,.
    \end{split}
  \end{equation*}
\end{proof}

\subsection{Well-posedness of the discrete problem}
\label{sec:coercivity}
The following theorem proves the well-posedness of the discrete
formulation.

\begin{theorem}[Coercivity of $\Bhsupg{}{}$]
  \label{lem:coercivity_Bh}
  For any $v_h\in V_h$,
  \begin{equation}
    \label{eq:coercivity_Bh}
    \Bsupg[h]{v_h}{v_h} \geq \min\left\{\frac14 ,\frac{\sigma_\ast}{2}\right\}
    \frac{1-C_\tau}{2} \ennorm[\K\beta\gamma]{v_h} \,,
  \end{equation}
  where $C_\tau$ is the constant introduced in~\eqref{eq:deftau}.
\end{theorem}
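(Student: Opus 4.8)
The plan is to evaluate $\Bsupg[h]{v_h}{v_h}$ by splitting it into the four forms \eqref{eq:defah}--\eqref{eq:defdh} and bounding each from below. The first step is the one clean cancellation: since the two scalar products in \eqref{eq:defbh} coincide when $w_h=v_h$, the skew-symmetric form vanishes on the diagonal, $\b[h]{v_h}{v_h}=0$, so only $\a[h]$, $\c[h]$ and $\d[h]$ survive. I would then keep in view the decomposition of the target norm recorded in \eqref{eq:H1:apriori:proof:00}: besides the exact-gradient terms, $\ennorm[\K\beta\gamma]{v_h}^2$ is built from the projected diffusion $\norm[E]{\sqrt{\K}\proj{k-1}{}\nabla v_h}^2$, the projected streamline term $\tau_E\norm[E]{\beta\cdot\proj{k-1}{}\nabla v_h}^2$, the stabilization $\vemstab[E]{}{}$, and the reaction $\norm[E]{\sqrt{\gamma}\proj{k-1}{}v_h}^2$; the first three are exactly $\a[h]{v_h}{v_h}$ by \eqref{eq:defah}.

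Next I would treat the reaction and the $\d[h]$ term. The diagonal of $\c[h]$ from \eqref{eq:defch} equals the reaction term plus the cross term $\tau_E\scal[E]{\gamma\proj{k-1}{}v_h}{\beta\cdot\proj{k-1}{}\nabla v_h}$; bounding the latter by Cauchy--Schwarz and Young and using $\tau_E\gamma_E\le C_\tau$ (the third argument of the minimum in \eqref{eq:deftau}) extracts a factor $C_\tau$, leaving a $(1-C_\tau)$-type share of the reaction while charging at most half of the projected streamline term. For $\d[h]$ in \eqref{eq:defdh} I would apply Cauchy--Schwarz followed by the inverse inequality \eqref{eq:estimnormlapl} (applied to the polynomial $\proj{k-1}{}\nabla v_h$) together with $\tau_E\K[E]\le\tilde{C}^E_k h_E^2$ (the first argument of the minimum in \eqref{eq:deftau}) and $\norm[E]{\K g}\le\sqrt{\K[E]}\norm[E]{\sqrt{\K}g}$; this absorbs $\d[h]{v_h}{v_h}$ into at most half of the projected diffusion and streamline contributions of $\a[h]$.

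The remaining structural point is that $\ennorm[\K\beta]{v_h}$ in \eqref{eq:defennormKbeta} also contains the \emph{exact}-gradient quantities $\norm[E]{\sqrt{\K}\nabla v_h}^2$ and $\tau_E\norm[E]{\beta\cdot\nabla v_h}^2$ coming from $\a{v_h}{v_h}$, whereas $\Bsupg[h]$ only sees polynomial projections. Here I would use the coercivity of the stabilization \eqref{eq:vemstab_coercivity}, which controls $\norm[E]{\sqrt{\K[\beta,E]}(\nabla v_h-\proj{k-1}{}\nabla v_h)}^2$ by $\sigma_\ast^{-1}\vemstab[E]{}{}$, and, recalling the additive splitting $\K[\beta,E]=\K+\tau_E\beta\beta^\intercal$ of Remark~\ref{remark:defa:alt}, a triangle inequality to estimate the exact-gradient terms by the projected ones plus the stabilization. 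This makes the whole $\K\beta$-part of $\ennorm[\K\beta\gamma]{v_h}^2$ controllable by $\a[h]{v_h}{v_h}$, with the stabilization constant entering as $\sigma_\ast/2$ (using $1+\sigma_\ast^{-1}\le 2\sigma_\ast^{-1}$ for $\sigma_\ast\le1$) and the triangle factor producing the competing $1/4$.

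Finally I would sum over $E\in\Th$, tune the Young parameters so that each of the four building blocks survives with a strictly positive coefficient, and read off the worst one; the constant $\min\{1/4,\sigma_\ast/2\}\tfrac{1-C_\tau}{2}$ should emerge as the product of the $\a[h]$-versus-norm factor $\min\{1/4,\sigma_\ast/2\}$ and the reaction factor $\tfrac{1-C_\tau}{2}$. I expect the main obstacle to be exactly this bookkeeping: both the $\c[h]$ cross term and the $\d[h]$ term borrow from the single projected streamline term $\tau_E\norm[E]{\beta\cdot\proj{k-1}{}\nabla v_h}^2$, so the Young weights (calibrated through all three constraints in \eqref{eq:deftau}) must be chosen so that the combined deduction still leaves a positive remainder, while the passage from projected to exact gradients via \eqref{eq:vemstab_coercivity} must be arranged so that $\sigma_\ast$ appears as $\sigma_\ast/2$ rather than a worse power.
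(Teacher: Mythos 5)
Your proposal is correct and follows essentially the same route as the paper's proof: the skew-symmetry cancellation $\b[h]{v_h}{v_h}=0$, Cauchy--Schwarz/Young on the reaction cross term with $\tau_E\gamma_E\leq C_\tau$ from \eqref{eq:deftau}, the inverse inequality \eqref{eq:estimnormlapl} with $\tau_E\leq\tilde{C}^E_k h_E^2\slash\K[E]$ to absorb $\d[h]{v_h}{v_h}$ into half of the projected diffusion and streamline terms, and finally the stabilization coercivity \eqref{eq:vemstab_coercivity} together with the halving inequalities $\norm[E]{a}^2+\norm[E]{a-b}^2\geq\tfrac12\norm[E]{b}^2$ to recover the exact-gradient part of $\ennorm[\K\beta\gamma]{v_h}$, yielding precisely the factor $\min\{1/4,\sigma_\ast/2\}\tfrac{1-C_\tau}{2}$. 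The bookkeeping you flag as the main obstacle is resolved exactly as you anticipate, with the streamline term retaining the share $\tfrac12-\tfrac{C_\tau}{2}$ after both deductions.
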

\begin{proof}
  Let $v_h\in V_h$. By definition \eqref{eq:defbh}, it holds that
  \begin{equation*}
    \b{v_h}{v_h} = \frac12\sum_{E\in\Th} 
    \left[
      \scal[E]{\beta\cdot\proj{k-1}{}\nabla v_h}{\proj{k-1}{}v_h}  -
      \scal[E]{\proj{k-1}{}v_h}{\beta\cdot\proj{k-1}{}\nabla v_h}
    \right]= 0 \,.
  \end{equation*}
  Moreover, using Cauchy-Schwarz and Young inequalities we find that
  \begin{equation*}
    \begin{split}
      \tau_E\abs{\scal[E]{\gamma \proj{k-1}{}
          v_h}{\beta\cdot\proj{k-1}{} \nabla v_h}} &\leq
      \sqrt{\gamma_E}\tau_E\norm[E]{\sqrt{\gamma} \proj{k-1}{}v_h}
      \norm[E]{\beta\cdot\proj{k-1}{}\nabla v_h}
      \\
      &\leq \frac{1}{2} \norm{\sqrt{\gamma} \proj{k-1}{}v_h}^2 +
      \frac{\gamma_E\tau_E^2}{2} \norm[E]{\beta\cdot\proj{k-1}{}\nabla
        v_h}^2\,,
    \end{split}
  \end{equation*}
  which implies that
  \begin{equation}
    \tau_E\scal[E]{\gamma\proj{k-1}{}
      v_h}{\beta\cdot\proj{k-1}{}\nabla v_h} \geq - \frac{1}{2}
    \norm{\sqrt{\gamma} \proj{k-1}{}v_h}^2 - \frac{\gamma_E\tau_E^2}{2}
    \norm[E]{\beta\cdot\proj{k-1}{}\nabla v_h}^2\,.
    \label{eq:Bsupg:coercivity:proof:00}
  \end{equation}
  Inverse inequality \eqref{eq:estimnormlapl} imply that
  \begin{equation}
    \begin{split}
      \tau_E\norm[E]{\div\left(\K \proj{k-1}{}\nabla v_h\right)}^2
      &\leq \frac{\tilde{C}_k^E h^2_E}{\K[E]} \norm[E]{\div\left(\K
          \proj{k-1}{} \nabla v_h\right)}^2 \leq
      \frac{1}{\K[E]}\norm[E]{\K\proj{k-1}{}\nabla v_h}^2
      \\
      &\leq \norm[E]{\sqrt{\K}\proj{k-1}{}\nabla v_h}^2,
      \label{eq:Bsupg:coercivity:proof:10}
    \end{split}
  \end{equation}
  since
  $\norm[E]{\K\proj{k-1}{}\nabla
    v_h}\leq\sqrt{\K[E]}\norm[E]{\sqrt{\K}\proj{k-1}{}\nabla v_h}$.
  Using the definition of $\Bsupg[h]{}{}$, cf.~\eqref{eq:defBhsupg},
  Cauchy-Schwarz inequality, inverse inequality
  \eqref{eq:estimnormlapl},
  inequalities~\eqref{eq:Bsupg:coercivity:proof:00},
  \eqref{eq:Bsupg:coercivity:proof:10} and \eqref{eq:deftau}, we have
  \begin{equation*}
    \begin{split}
      &\Bsupg[h]{v_h}{v_h} = \sum_{E\in\Th}\left\{
        \norm[E]{\sqrt{\K}\proj{k-1}{}\nabla v_h}^2 +
        \tau_E\norm[E]{\beta\cdot\proj{k-1}{}\nabla v_h}^2 \right.
      \\
      &\qquad + \shE{\left(I-\proj{k-1}{}\right)
        v_h}{\left(I-\proj{k-1}{}\right) v_h}
      \\
      &\qquad + \norm[E]{\sqrt{\gamma} \proj{k-1}{}v_h}^2 +
      \tau_E\scal[E]{\gamma
        \proj{k-1}{}v_h}{\beta\cdot\proj{k-1}{}\nabla v_h}
      \\
      &\left. \qquad - \tau_E
        \scal[E]{\div\left(\sqrt{\K}\proj{k-1}{}\nabla
            v_h\right)}{\beta\cdot\proj{k-1}{}\nabla v_h} \right\}
      \\
      &\quad \geq \sum_{E\in\Th} \left\{
        \norm[E]{\sqrt{\K}\proj{k-1}{}\nabla v_h}^2 +
        \left(1-\frac{\gamma_E\tau_E}{2}\right) \tau_E
        \norm[E]{\beta\cdot \proj{k-1}{} \nabla v_h}^2 \right.
      \\
      &\qquad+ \shE{\left(I-\proj{k-1}{}\right)
        v_h}{\left(I-\proj{k-1}{}\right) v_h} +
      \left(1-\frac{1}{2}\right)\norm[E]{\sqrt{\gamma}
        \proj{k-1}{}v_h}^2
      \\
      & \left. \qquad - \tau_E
        \norm[E]{\div\left(\sqrt{\K}\proj{k-1}{}\nabla v_h\right)}
        \norm[E]{\beta\cdot\proj{k-1}{}\nabla v_h} \right\}
      \\
      &\quad \geq \sum_{E\in\Th} \left\{
        \norm[E]{\sqrt{\K}\proj{k-1}{}\nabla v_h}^2 +
        \left(\frac{1}{2}-\frac{C_{\tau}}{2}\right) \tau_E
        \norm[E]{\beta\cdot \proj{k-1}{} \nabla v_h}^2 \right.
      \\
      &\qquad+ \shE{\left(I-\proj{k-1}{}\right)
        v_h}{\left(I-\proj{k-1}{}\right) v_h} + \frac{1}{2}
      \norm[E]{\sqrt{\gamma} \proj{k-1}{} v_h}^2
      \\
      &\left. \qquad - \sum_{E\in\Th}\frac12\tau_E\norm[E]{\div
          \left(\K\proj{k-1}{} \nabla v_h \right)}^2 \right\}
      \\
      &\quad \geq \sum_{E\in\Th}\left\{ \frac{1}{2}
        \norm[E]{\sqrt{\K}\proj{k-1}{}\nabla v_h}^2 + \left(
          \frac{1}{2}-\frac{C_{\tau}}{2} \right)
        \tau_E\norm[E]{\beta\cdot\proj{k-1}{}\nabla v_h}^2 \right.
      \\
      &\left. \qquad + \shE{\left(I-\proj{k-1}{}\right)
          v_h}{\left(I-\proj{k-1}{}\right) v_h} + \frac{1}{2}
        \norm[E]{\sqrt{\gamma} \proj{k-1}{}v_h}^2\right\}
      \\
      &\quad \geq \frac{1-C_\tau}{2} \sum_{E\in\Th} \left(
        \norm[E]{\sqrt{\K}\proj{k-1}{}\nabla v_h}^2 +
        \tau_E\norm[E]{\beta\cdot\proj{k-1}{}\nabla v_h}^2 \right.
      \\
      &\qquad \left. + \shE{\left(I-\proj{k-1}{}\right)
          v_h}{\left(I-\proj{k-1}{}\right) v_h} +
        \norm[E]{\sqrt{\gamma} \proj{k-1}{}v_h}^2 \right) \,.
    \end{split}
  \end{equation*}
  Next, using the coercivity of the VEM stabilization in
  \eqref{eq:vemstab_coercivity} we get $\forall E\in\Th$,
  \begin{equation*}
    \begin{split}
      &\norm[E]{\sqrt{\K}\proj{k-1}{}\nabla v_h}^2 +
      \tau_E\norm[E]{\beta\cdot\proj{k-1}{}\nabla v_h}^2 +
      \shE{\left(I-\proj{k-1}{}\right)
        v_h}{\left(I-\proj{k-1}{}\right) v_h}
      \\
      &\quad \geq \frac12 \a[h]{v_h}{v_h}+ \frac12
      \left(\norm[E]{\sqrt{\K}\proj{k-1}{}\nabla v_h}^2 + \tau_E
        \norm[E]{\beta\cdot\proj{k-1}{}\nabla v_h}^2 \right.
      \\
      & \qquad \left. + \sigma_\ast \left( \K[E]+\tau_E \beta_E^2
        \right) \norm[E]{\nabla v_h - \nabla \proj{k-1}{} v_h}^2
      \right)
      \\
      & \quad \geq \frac12 \a[h]{v_h}{v_h} +
      \min\left\{\frac12,\sigma_\ast\right\}
      \left(\norm[E]{\sqrt{\K}\proj{k-1}{}\nabla v_h}^2 +
        \tau_E\norm[E]{\beta\cdot\proj{k-1}{}\nabla v_h}^2 \right.
      \\
      &\qquad \left. + \left(\K[E] + \tau_E \beta_E^2\right)
        \norm[E]{\nabla v_h - \proj{k-1}{} \nabla v_h }^2 \right)
      \\
      & \quad \geq \frac12 \a[h]{v_h}{v_h} +
      \min\left\{\frac12,\sigma_\ast\right\}
      \left(\norm[E]{\sqrt{\K}\proj{k-1}{}\nabla v_h}^2 +
        \tau_E\norm[E]{\beta\cdot\proj{k-1}{}\nabla v_h}^2 \right.
      \\
      &\qquad \left. + \norm[E]{\sqrt{\K} \left( \nabla v_h -
            \proj{k-1}{} \nabla v_h \right)}^2 + \tau_E \norm[E]{\beta
          \cdot \left( \nabla v_h - \proj{k-1}{} \nabla v_h \right)
        }^2 \right)
      \\
      &\quad \geq \min\left\{ \frac{1}{4},\frac{\sigma_\ast}{2} \right\} \left(
        \a[h]{v_h}{v_h} + \a{v_h}{v_h} \right) \,.
    \end{split}
  \end{equation*}
  In the last line we use the following inequalities:
  \begin{align*}
    \norm[E]{\sqrt{\K}\proj{k-1}{}\nabla v_h}^2
    + \norm[E]{\sqrt{\K}\left( \nabla v_h - \proj{k-1}{} \nabla v_h \right)
    }^2  &\geq \frac12\norm[E]{\sqrt{\K}\nabla v_h}^2\,,
    \\
    \tau_E\left( \norm[E]{\beta\cdot\proj{k-1}{}\nabla v_h}^2 + \norm[E]{\beta
    \cdot \left( \nabla v_h - \proj{k-1}{} \nabla v_h \right)
    }^2 \right) &\geq \frac12\norm[E]{\beta\cdot\nabla v_h}^2 \,.
  \end{align*}
\end{proof}

\subsection{A priori error estimates}
\label{sec:aprior_estim}

Here, we prove the a priori error estimates showing that the
stabilized formulation of the problem has optimal rates of
convergence.
Several constants in the error inequalities are numbered to track
their dependence on the local problem coefficients.
\begin{theorem}
  \label{teo:apriori}
  Let $u\in\sobh{s+1}{\Th}\cap\sobh[0]{1}{\Omega}$,
  $2\leq s+1\leq k+1$, be the solution of the variational
  problem~\eqref{eq:exvarform:supg} with $f\in\sobh{s-1}{\Omega}$,
  $\K\in\big[\sob{s+1}{\infty}{\Omega}\big]^{d\times d}$,
  $\beta\in\left[\sob{s+1}{\infty}{\Omega}\right]^{d}$ and
  $\gamma\in\sob{s+1}{\infty}{\Omega}$.
  Let $u_h\in V_h$ be the solution of the
  VEM~\eqref{eq:vemvarform_supg} under the mesh assumption of
  Section~\ref{sec:mesh}.
  Then, for $h$ sufficiently small, it holds
  \begin{equation}
    \label{eq:apriori_estim_ennorm}
    \begin{split}
      \ennorm[\K\beta\gamma]{u-u_h} &\leq \mathcal{C} h^s \left\{
        \max_{E\in\Th} \left(
          \frac{\norm[\sob{s}{\infty}{E}]{\K}}{\sqrt{\Kmin[E]}} ,
          \sqrt{h_E\beta_E} , h_E\sqrt{\gamma_E} ,
          \mathcal{C}^{nc}_{a,E} , \mathcal{C}^{nc,s+1}_{b,E} ,
          \mathcal{C}^{nc}_{c,E} , \mathcal{C}^{nc}_{d,E} ,
          \mathcal{K}^{nc}_{\mathcal{N},E} ,\right. \right.
      \\
      &\qquad \left. \left.
          \vphantom{\frac{\norm[\sob{s}{\infty}{E}]{\K}}{\sqrt{\Kmin[E]}}}
          \mathcal{K}^{nc}_{b,E} \right) \norm[s+1]{u} +
        \max_{E\in\Th} \mathcal{C}^{nc}_{f,E} \right\}
      \,,
    \end{split}
  \end{equation}
  where $\mathcal{C}^{nc}_{a,E}$, $\mathcal{C}^{nc,s+1}_{b,E}$,
  $\mathcal{C}^{nc}_{c,E}$, $\mathcal{C}^{nc}_{d,E}$,
  $\mathcal{K}^{nc}_{\mathcal{N},E}$ and $\mathcal{K}^{nc}_{b,E}$ are
  defined by \eqref{eq:defCNCa}, \eqref{eq:defCNCb},
  \eqref{eq:defCNCc}, \eqref{eq:defCNCd}, \eqref{eq:defCNCNh} and
  \eqref{eq:defKNCb} respectively, and
  \begin{equation}
    \label{eq:defCNCf}
    \mathcal{C}^{nc}_{f,E} =  \max \left\{
      \frac{\seminorm[s-1,E]{f - \proj{0}{}f}}{\sqrt{\Kmin[E]}} ,
      \frac{h^{-1}_E\tau_E \seminorm[s-1,E]{f\beta - \proj{0}{f\beta}}}
      {\sqrt{\Kmin[E]}} \right\} \,.
  \end{equation}
\end{theorem}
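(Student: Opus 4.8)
The plan is to run a Strang-type argument, splitting the error through the virtual element interpolant $u_I\in V_h$ of $u$ and controlling the discrete part by the coercivity of Theorem~\ref{lem:coercivity_Bh}. First I would write $u-u_h=(u-u_I)+\delta_h$ with $\delta_h:=u_I-u_h\in V_h$, so that by the triangle inequality $\ennorm[\K\beta\gamma]{u-u_h}\le\ennorm[\K\beta\gamma]{u-u_I}+\ennorm[\K\beta\gamma]{\delta_h}$. The interpolation contribution $\ennorm[\K\beta\gamma]{u-u_I}$ is already controlled by the interpolation estimate~\eqref{eq:H1:apriori:proof:00}, which produces a bound of order $h^s$ with constant $\bigl(\max_{E}\{\K[E],\tau_E\beta_E^2,h_E^2\gamma_E\}\bigr)^{1/2}$; it therefore remains to estimate $\ennorm[\K\beta\gamma]{\delta_h}$.

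For the discrete part, coercivity~\eqref{eq:coercivity_Bh} gives $C\,\ennorm[\K\beta\gamma]{\delta_h}^2\le\Bsupg[h]{\delta_h}{\delta_h}$, and using the discrete equation~\eqref{eq:vemvarform_supg} as $\Bsupg[h]{u_h}{\delta_h}=\Fsupg[h]{\delta_h}$ I would rewrite $\Bsupg[h]{\delta_h}{\delta_h}=\Bsupg[h]{u_I}{\delta_h}-\Fsupg[h]{\delta_h}$. The heart of the proof is the telescoping decomposition
\[
\Bsupg[h]{\delta_h}{\delta_h}
=\underbrace{\Bsupg[h]{u_I-u}{\delta_h}}_{(\mathrm{I})}
+\underbrace{\bigl(\Bsupg[h]{u}{\delta_h}-\Bsupg{u}{\delta_h}\bigr)}_{(\mathrm{II})}
+\underbrace{\bigl(\Bsupg{u}{\delta_h}-\Fsupg{\delta_h}\bigr)}_{(\mathrm{III})}
+\underbrace{\bigl(\Fsupg{\delta_h}-\Fsupg[h]{\delta_h}\bigr)}_{(\mathrm{IV})},
\]
where each group isolates one source of error (note that $u\in\sobnc{k}{\Th}$, since $u\in\sobh[0]{1}{\Omega}$ is continuous, so the smooth argument is admissible in~\eqref{eq:estim_Bsupg-Bsupgh}).

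Term $(\mathrm{I})$ is handled by the continuity bounds collected in Lemmas~\ref{lem:estim_a-ah}--\ref{lem:estim_d-dh}: it is controlled by a norm of the interpolation error $u-u_I$ times $\ennorm[\K\beta\gamma]{\delta_h}$, and invoking~\eqref{eq:H1:apriori:proof:00} again produces an $O(h^s)$ factor. Term $(\mathrm{II})$ is exactly the bilinear-form consistency error estimated in~\eqref{eq:estim_Bsupg-Bsupgh} with $w=u$, contributing the constants $\mathcal{C}^{nc}_{a,E},\mathcal{C}^{nc,s+1}_{b,E},\mathcal{C}^{nc}_{c,E},\mathcal{C}^{nc}_{d,E},\mathcal{K}^{nc}_{b,E}$ together with $\norm[\sob{1}{\infty}{E}]{\K}/\sqrt{\K[E]}$, $\sqrt{h_E\beta_E}$ and $h_E\sqrt{\gamma_E}$. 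Term $(\mathrm{III})$ equals the conformity error $\Nh{u}{\delta_h}$ by the identity preceding Lemma~\ref{lemma:estim_Nh}, hence is bounded through~\eqref{eq:estim_Nh}, supplying $\mathcal{K}^{nc}_{\mathcal{N},E}$ and $\norm[\sob{s}{\infty}{E}]{\K}/\sqrt{\Kmin[E]}$. Finally, for the functional-consistency term $(\mathrm{IV})$ I would use~\eqref{eq:defFsupg} and~\eqref{eq:defFsupgh} to write it as $\scal{f}{\proj{k-1}{}\delta_h-\delta_h}+\sum_{E}\tau_E\scal[E]{f}{\beta\cdot(\proj{k-1}{}\nabla\delta_h-\nabla\delta_h)}$; exploiting the $\lebl{}$-orthogonality of $I-\proj{k-1}{}$ to subtract $\proj{0}{}f$ and $\proj{0}{f\beta}$, then applying Cauchy--Schwarz and~\eqref{eq:estimP0k-1}, one obtains the additive term $\max_E\mathcal{C}^{nc}_{f,E}\,h^s\,\ennorm[\K\beta]{\delta_h}$ with $\mathcal{C}^{nc}_{f,E}$ as in~\eqref{eq:defCNCf} (the $f\beta$ manipulation mirrors the one used for $\TERM{R}{E,2}$ in Lemma~\ref{lem:estim_c-ch}).

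Collecting $(\mathrm{I})$--$(\mathrm{IV})$, every contribution carries a common factor $\ennorm[\K\beta\gamma]{\delta_h}$ (using $\ennorm[\K\beta]{\delta_h}\le\ennorm[\K\beta\gamma]{\delta_h}$); dividing by $\ennorm[\K\beta\gamma]{\delta_h}$ and recombining with the interpolation bound via the triangle inequality yields~\eqref{eq:apriori_estim_ennorm}. The main obstacle I anticipate is term $(\mathrm{I})$: the continuity bound of Lemma~\ref{lem:estim_b-bh} is \emph{not} purely in energy-norm form but contains factors such as $\tau_E^{-1/2}\norm{\cdot}$ and $h_E^{-1}\mathcal{K}^{nc}_{b,E}\norm{\cdot}$, so one must verify that when these act on the interpolation error, where $\norm[E]{u-u_I}=O(h_E^{s+1})$ and $\norm[E]{\nabla(u-u_I)}=O(h_E^{s})$, the surviving powers of $h_E$ still combine to at least $O(h^s)$. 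This is precisely where the hypothesis ``$h$ sufficiently small'' enters, guaranteeing that the negative powers of $h_E$ hidden in $\tau_E^{-1/2}$ are compensated and that no term of order lower than $h^s$ remains.
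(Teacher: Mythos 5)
Your proposal is correct and follows essentially the same route as the paper's proof: the same splitting $u-u_h=(u-u_I)+(u_I-u_h)$, coercivity from Theorem~\ref{lem:coercivity_Bh} applied to the discrete part, and the identical four-term decomposition handled respectively by the continuity bounds of Lemmas~\ref{lem:estim_a-ah}--\ref{lem:estim_d-dh} with the interpolation estimate, the consistency estimate \eqref{eq:estim_Bsupg-Bsupgh}, the conformity bound \eqref{eq:estim_Nh}, and the projection argument for $\Fsupg{}-\Fsupg[h]{}$ giving $\mathcal{C}^{nc}_{f,E}$. One small correction to your closing remark: the negative powers hidden in $\tau_E^{-1/2}$ are compensated \emph{exactly} by the definition \eqref{eq:deftau}, since $\tau_E^{-1/2}\norm[E]{u-u_I}\leq\mathcal{C}\max\left\{\sqrt{\K[E]},\sqrt{h_E\beta_E},h_E\sqrt{\gamma_E}\right\}h_E^{s}\seminorm[s+1,E]{u}$, so this step yields the $O(h^s)$ rate without invoking ``$h$ sufficiently small.''
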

\begin{proof}
  \noindent
  First, by using the triangle inequality we have
  \begin{equation*}
    \ennorm[\K\beta\gamma]{u-u_h} \leq \ennorm[\K\beta\gamma]{u-u_I}
    + \ennorm[\K\beta\gamma]{u_h - u_I} \,.
  \end{equation*}
  The first term is bounded using~\eqref{eq:H1:apriori:proof:00} with
  $\psi=u$.
  We are left to estimate the norm of $e_h\defeq u_h-u_I$.
  Since $e_h\in V_h$, by \eqref{eq:coercivity_Bh} we know that
  \begin{equation}
    \begin{split}
      \alpha \ennorm[\K\beta\gamma]{e_h}^2 &\leq \Bhsupg{u_h -
        u_I}{e_h} = \Fsupg[h]{e_h} - \Bhsupg{u_I}{e_h}
      \\
      & = \Fsupg[h]{e_h} - \Fsupg{e_h} - \Nh{u}{e_h} + \Bsupg{u}{e_h}
      - \Bhsupg{u_I}{e_h}
      \\
      & \leq \abs{\Fsupg[h]{e_h} - \Fsupg{e_h}} + \abs{\Nh{u}{e_h}} +
      \abs{\Bsupg[h]{u-u_I}{e_h}}
      \\
      & \quad + \abs{ \Bsupg{u}{e_h} - \Bsupg[h]{u}{e_h} } \,.
      \label{eq:apriori_firsteqn}
    \end{split}
  \end{equation}
  We estimate the first term as follows:
  \begin{equation*}
    \label{eq:apriori_Fh-F}
    \begin{split}
      &\sum_{E\in\Th}\abs{\scal[E]{f}{e_h - \proj{k-1}{}e_h} +
        \tau_E\scal[E]{f}{\beta\cdot\left(\nabla e_h -
            \proj{k-1}{}\nabla e_h\right)}}
      \\
      &\quad = \sum_{E\in\Th}\abs{\scal[E]{f - \proj{0}{}f}{e_h -
          \proj{k-1}{}e_h}} + \tau_E \abs{\scal[E]{f\beta -
          \proj{0}{f\beta}}{\nabla e_h}}
      \\
      &\quad = \sum_{E\in\Th}\abs{\scal[E]{f - \proj{0}{}f -
          \proj{k-1}{f - \proj{0}{}f}}{e_h - \proj{k-1}{}e_h}}
      \\
      &\qquad + \tau_E \abs{\scal[E]{f\beta - \proj{0}{f\beta} -
          \proj{k-1}{f\beta - \proj{0}{f\beta}}}{\nabla e_h}}
      \\
      &\quad \leq \sum_{E\in\Th}\norm[E]{f - \proj{0}{}f -
        \proj{k-1}{f - \proj{0}{}f}} \norm[E]{e_h - \proj{k-1}{}e_h}
      \\
      &\qquad + \tau_E \norm[E]{f\beta - \proj{0}{f\beta} -
        \proj{k-1}{f\beta - \proj{0}{f\beta}}} \norm[E]{\nabla e_h}
      \\
      &\quad \leq \mathcal{C} h^s \sum_{E\in\Th}
      \left(\seminorm[s-1,E]{f - \proj{0}{}f } + h^{-1}_E\tau_E
        \seminorm[s-1,E]{f\beta - \proj{0}{f\beta}} \right)
      \norm[E]{\nabla e_h}
      \\
      &\quad \leq \mathcal{C} h^s \sum_{E\in\Th} \frac{
        \seminorm[s-1,E]{f - \proj{0}{}f} + h^{-1}_E\tau_E
        \seminorm[s-1,E]{f\beta - \proj{0}{f\beta}} }{\sqrt{\Kmin[E]}}
      \ennorm[\K\beta,E]{e_h} \,.
    \end{split}
  \end{equation*}
  Using the continuity estimate \eqref{eq:ah_continuity} to bound
  $\a[h]{}{}$, \eqref{eq:bh_continuity} to bound $\b[h]{}{}$,
  \eqref{eq:ch_continuity} to bound $\c[h]{}{}$,
  \eqref{eq:dh_continuity} to bound $\d[h]{}{}$, and the estimate of
  the VEM interpolant~\eqref{eq:H1:apriori:proof:00}, we estimate the
  third term as follows:
  \begin{equation*}
    \begin{split}
      \abs{\Bhsupg{u-u_I}{e_h}} &\leq \mathcal{C} h^s \left\{
        \ennorm[\K\beta\gamma]{u-u_I} \ennorm[\K\beta\gamma]{e_h} +
        \left(\max_{E\in\Th} \tau_E^{-\frac12} \norm{u-u_I}
        \right. \right.
      \\
      &\quad \left.\left.\vphantom{\max_{E\in\Th} \tau_E^{-\frac12}} +
          \max_{E\in\Th} \left(\mathcal{C}^{nc,1}_{b,E} +
            \mathcal{K}^{nc}_{b,E} \right) \norm{\nabla (u-u_I)
          }\right) \ennorm[\K\beta]{e_h} \right\}
      \\
      &\leq \mathcal{C} h^s \max_{E\in\Th} \left\{ \sqrt{\K[E]},
        \sqrt{h_E\beta_E}, h_E\sqrt{\gamma_E}, \mathcal{C}^{nc,1}_{b,E}
        , \mathcal{K}^{nc}_{b,E} \right\} \norm[s+1]{u}
      \ennorm[\K\beta\gamma]{e_h} \,.
    \end{split}
  \end{equation*}
  The proof of \eqref{eq:apriori_estim_ennorm} is concluded by using
  the above estimates, the estimate \eqref{eq:estim_Nh} on the
  non-conformity term and \eqref{eq:estim_Bsupg-Bsupgh}.
\end{proof}
\begin{remark}
  The second argument of the max in \eqref{eq:defCNCf} can be
  estimated as follows, using \eqref{eq:estim_ab-Pab}:
  \begin{equation*}
    \begin{split}
      \frac{h^{-1}_E\tau_E \seminorm[s-1,E]{f\beta -
          \proj{0}{f\beta}}}{\sqrt{\Kmin[E]}} \leq \mathcal{C}\frac{
        \norm[\sob{s-1}{\infty}{E}]{\beta - \proj{0}{}\beta}
        \norm[s-1,E]{f} + \norm[\sob{s-1}{\infty}{E}]{\beta}
        \norm[s-1,E]{f-\proj{0}{}f}}{\beta_E \sqrt{\Kmin[E]}} \,.
    \end{split}
  \end{equation*}
\end{remark}
\begin{remark}
  When we consider constant coefficients and a costant right-hand side
  all the non-consistency terms in \eqref{eq:apriori_estim_ennorm}
  vanish, yielding the following estimate:
  \begin{equation*}
    \begin{split}
      \ennorm[\K\beta\gamma]{u-u_h} &\leq \mathcal{C} h^s
      \max_{E\in\Th} \left( \sqrt{\K[E]} \,, \sqrt{h_E\beta_E} \,,
        h_E\sqrt{\gamma_E} , \mathcal{K}^{nc}_{\mathcal{N},E},
        \mathcal{K}^{nc}_{b,E} \right) \norm[s+1]{u} \,.
    \end{split}
  \end{equation*}
  Moreover, if we consider a conforming discretization, Theorem
  \ref{teo:apriori} proves a robust estimate with respect to the
  P\'eclet number:
  \begin{equation*}
    \ennorm[\K\beta\gamma]{u-u_h} \leq \mathcal{C} h^s
    \max_{E\in\Th} \left( \sqrt{\K[E]}\,,
      \sqrt{h_E\beta_E} \,, h_E\sqrt{\gamma_E} \right)
    \norm[s+1]{u} \,,
  \end{equation*}
  as obtained for classical Finite Elements.
\end{remark}

\section{Numerical Results}
\label{sec:numerics}

\newcommand{\MeshONE} {$\mathcal{M}_1$} \newcommand{\MeshTWO}
{$\mathcal{M}_2$} \newcommand{\MeshTHREE}{$\mathcal{M}_3$}
\newcommand{\MeshFOUR} {$\mathcal{M}_4$}
\newcommand{\HAT}[1]{\widehat{#1}}

\newcommand{\ilev}{n} \newcommand{\nR} {N_{P}} \newcommand{\nF}
{N_{F}} \newcommand{\nV} {N_{V}} \newcommand{\ndof} {\#\chi}
\newcommand{\hmax} {h_{\text{max}}}


The numerical experiments of this section are aimed at confirming the
convergence rates predicted by the \emph{a priori} analysis developed
in the previous sections and comparing the performance of the
nonconforming VEM with that of the conforming VEM.
In a preliminary stage, the consistency of the numerical method,
i.e. the exactness of these methods for polynomial solutions, has been
tested numerically by solving the elliptic equation with boundary and
source data determined by the monomials $u(x,y)=x^{\mu}y^{\nu}$ on
different set of polygonal meshes and for all possible combinations of
nonnegative integers $\mu$ and $\nu$ such that $\mu+\nu\leq k$, with
$k=1,2,3$.
In all the cases, the error magnitude was within the arithmetic
precision, thus confirming the consistency of the VEM.

To study the accuracy of the method we solve the
convection-reaction-diffusion equation on the domain
$\Omega=]0,1[\times]0,1[$.
The variable coefficients of the equation are given by
\begin{align}
  \label{eq:accuracy_test-K}
  \K(x,y) &=
            \begin{array}{l}
              \alpha
              \left[
              \begin{array}{cc}
                1+x^2 & xy   \\
                xy    & 1+y^2
              \end{array}
                        \right],
                        \quad\alpha=\,10^{-7}
            \end{array},
  \\[1em]
  \label{eq:accuracy_test-beta}
  \beta(x,y) &= \big(\cos(2\pi x),\sin(2\pi y)\big)^T,
  \\[1.em]
  \gamma(x,y)          &= \exp(x+y).
                         \label{eq:accuracy:test}
\end{align}
Since the P\'{e}clet number here is in the range
$\big[10^6,10^7\big]$, all calculations are in the convection
dominated regime.
The forcing term and the Dirichlet boundary conditions are set such
that the exact solution is
\begin{align}
  u(x,y) = \sin(2\pi x)\sin(2\pi y)+ \,x^5+\,y^5+1.
\end{align}

The performances of the methods presented above are investigated by
evaluating the rate of convergence on four different sequences of
unstructured meshes, labeled by~\MeshONE{}, \MeshTWO{}, \MeshTHREE{},
and \MeshFOUR{} respectively.
The top panels of Fig.~\ref{fig:Meshes} show the first mesh of each
sequence and the bottom panels show the mesh of the first refinement.
\begin{figure}
  \centering
  \begin{tabular}{cccc}
    \includegraphics[scale=0.135]{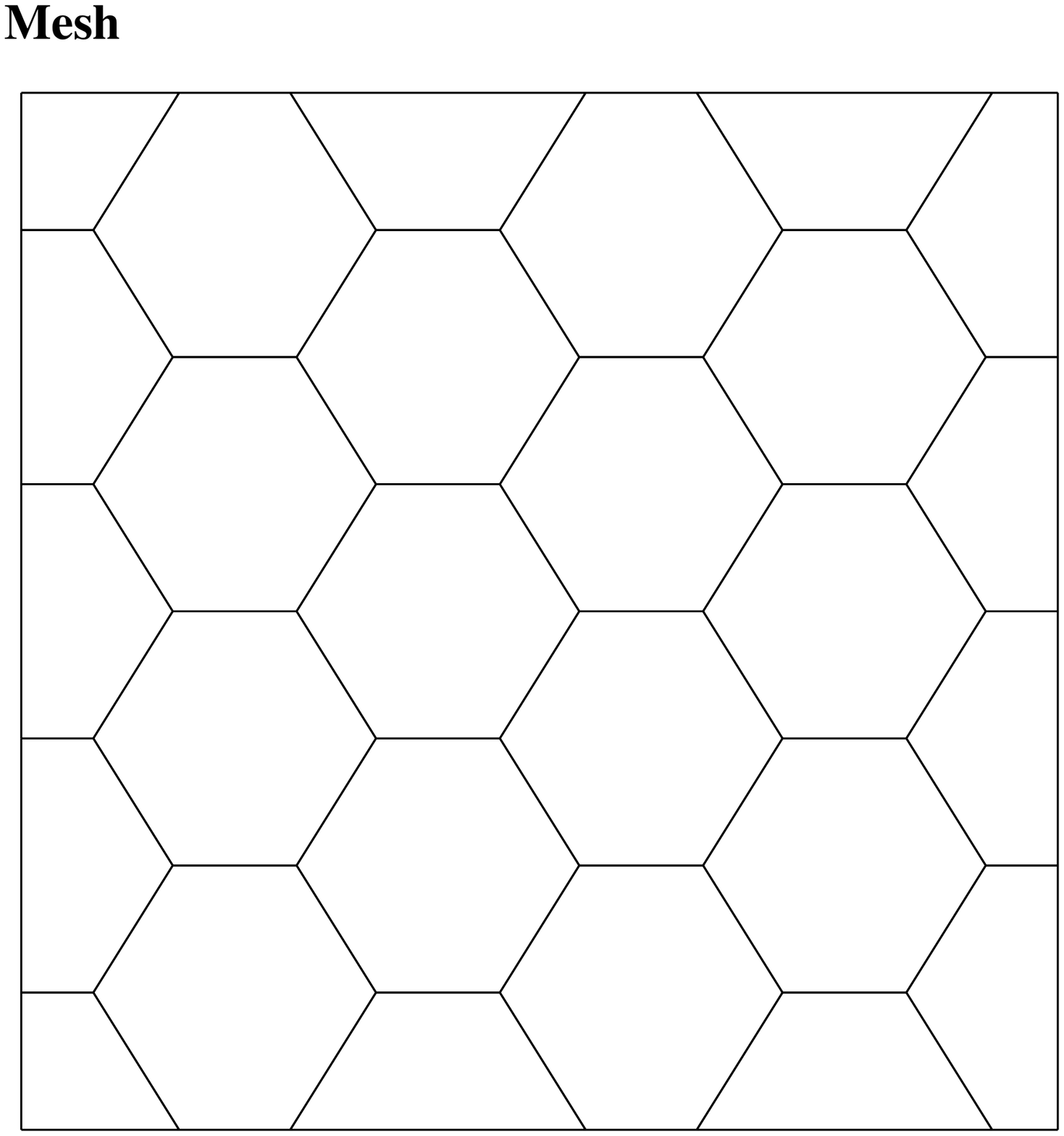} &
    \includegraphics[scale=0.135]{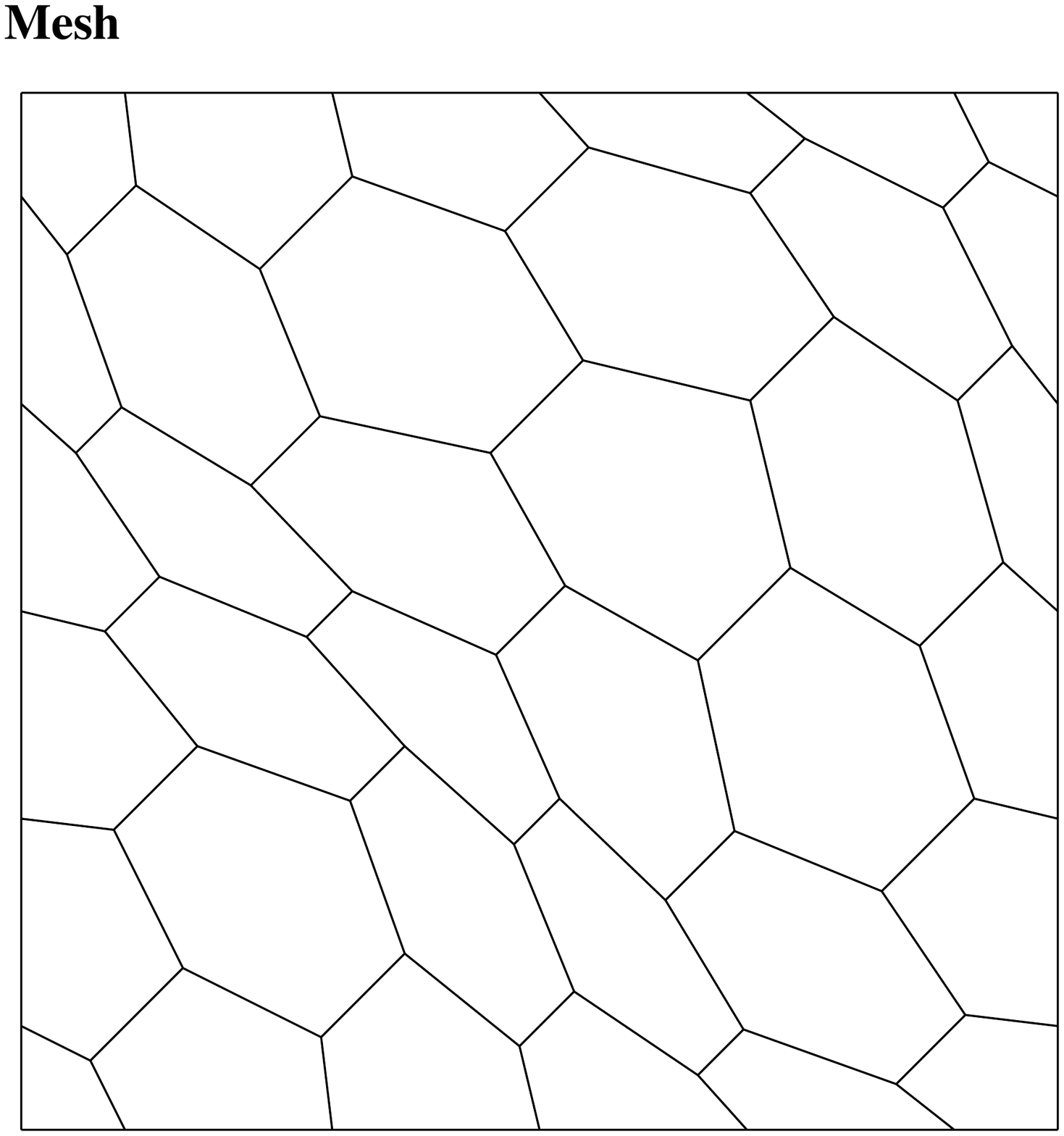}&
    \includegraphics[scale=0.135]{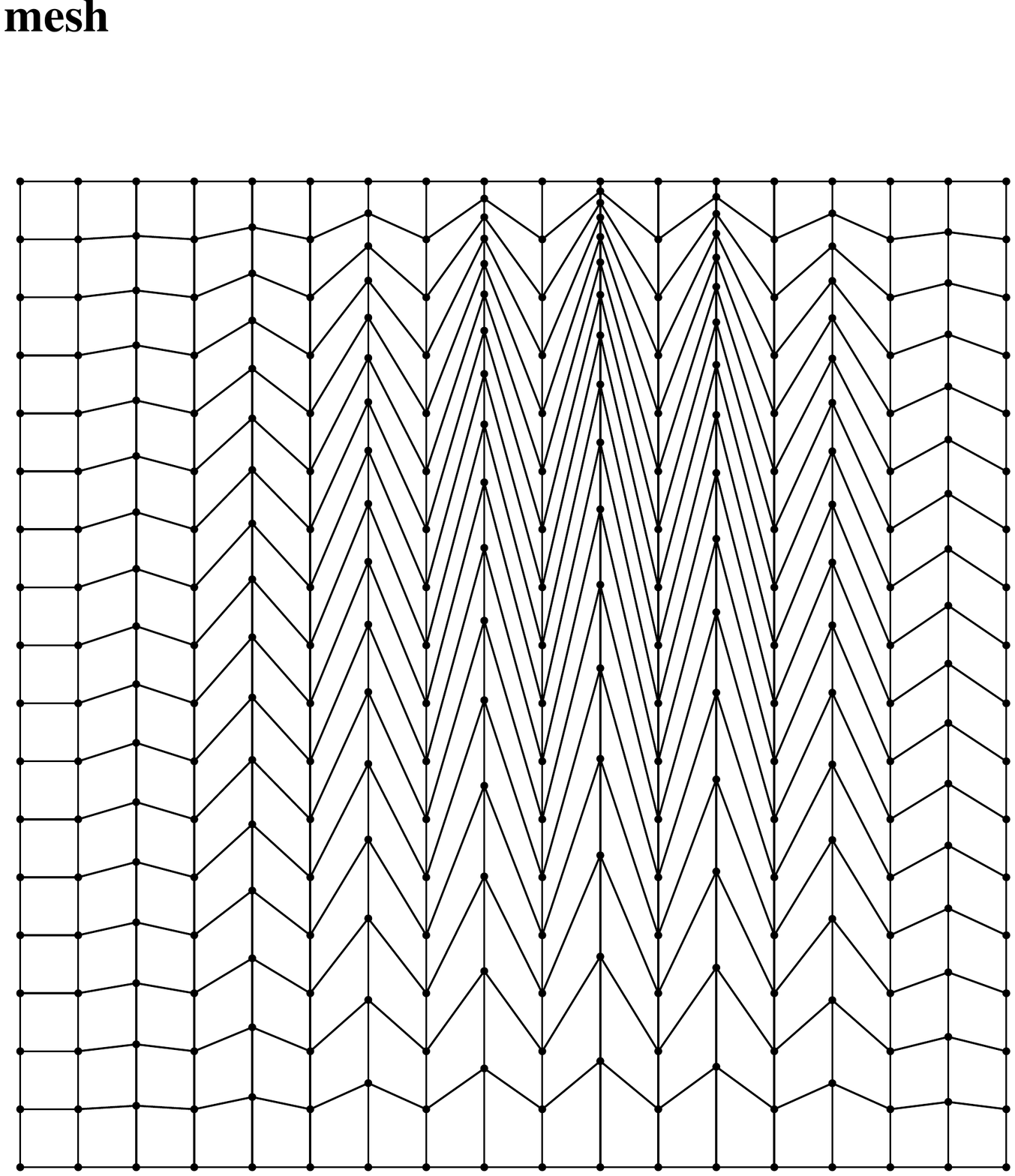} &
    \includegraphics[scale=0.135]{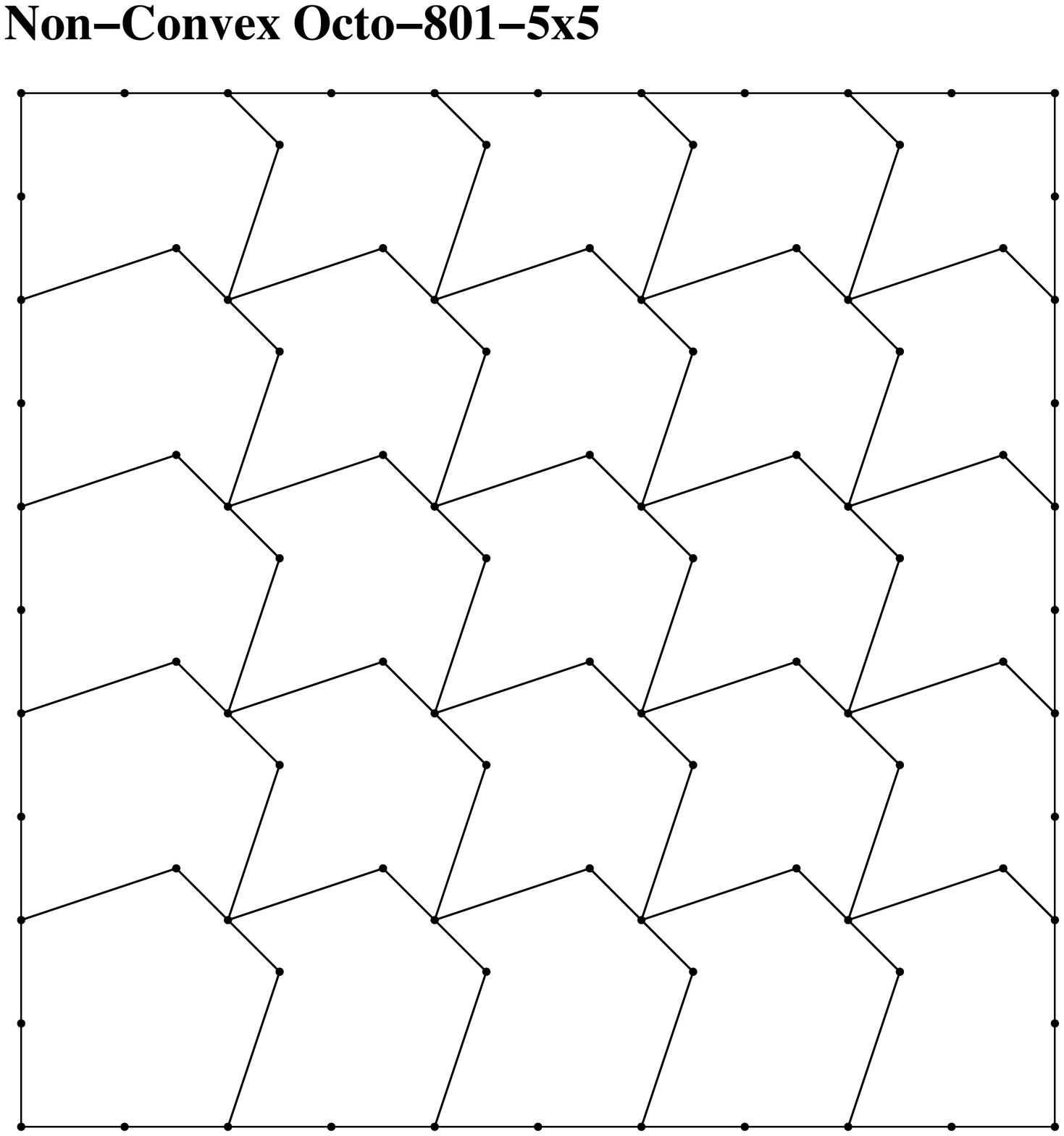} \\[1em]
    \includegraphics[scale=0.135]{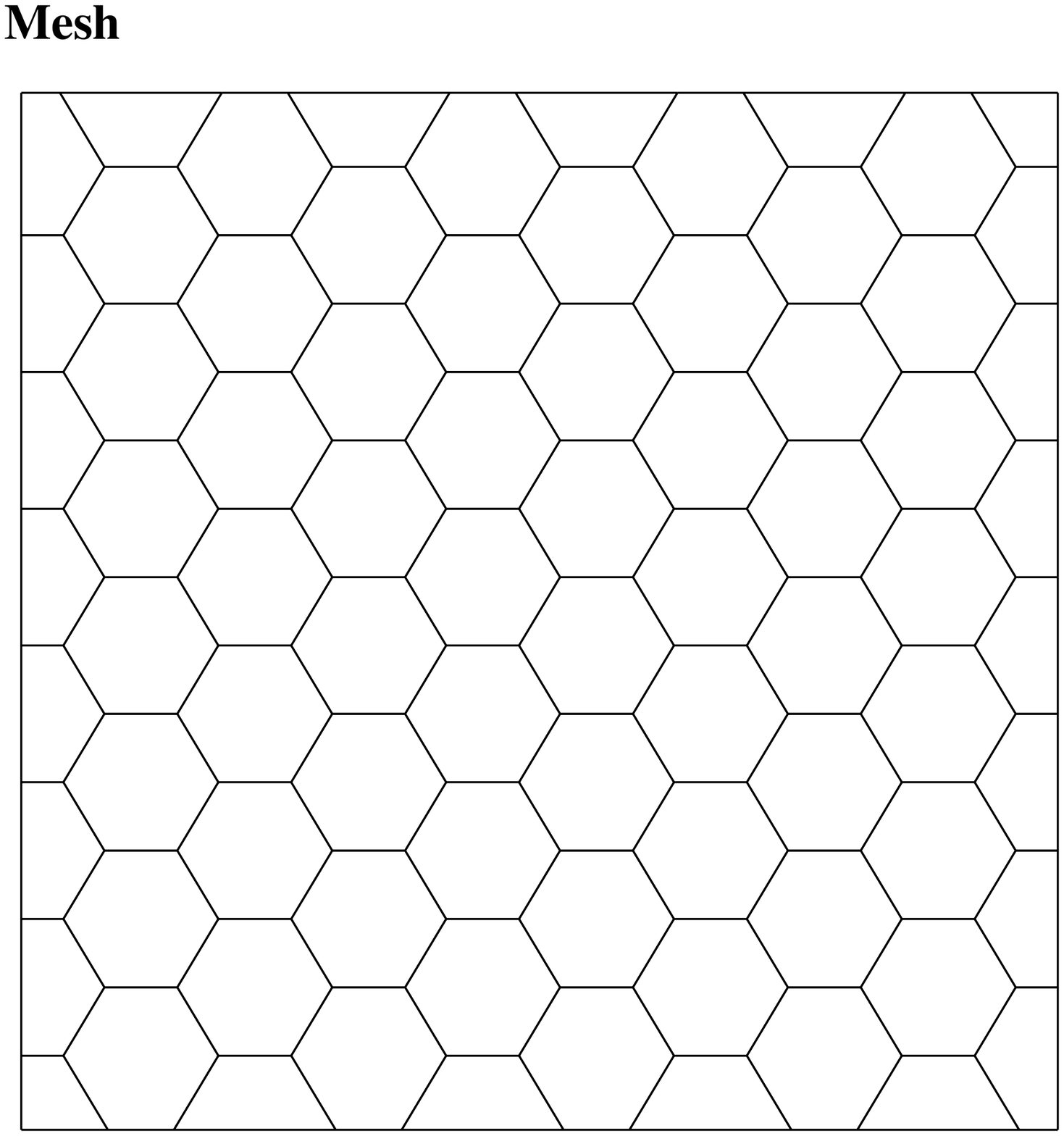} &
    \includegraphics[scale=0.135]{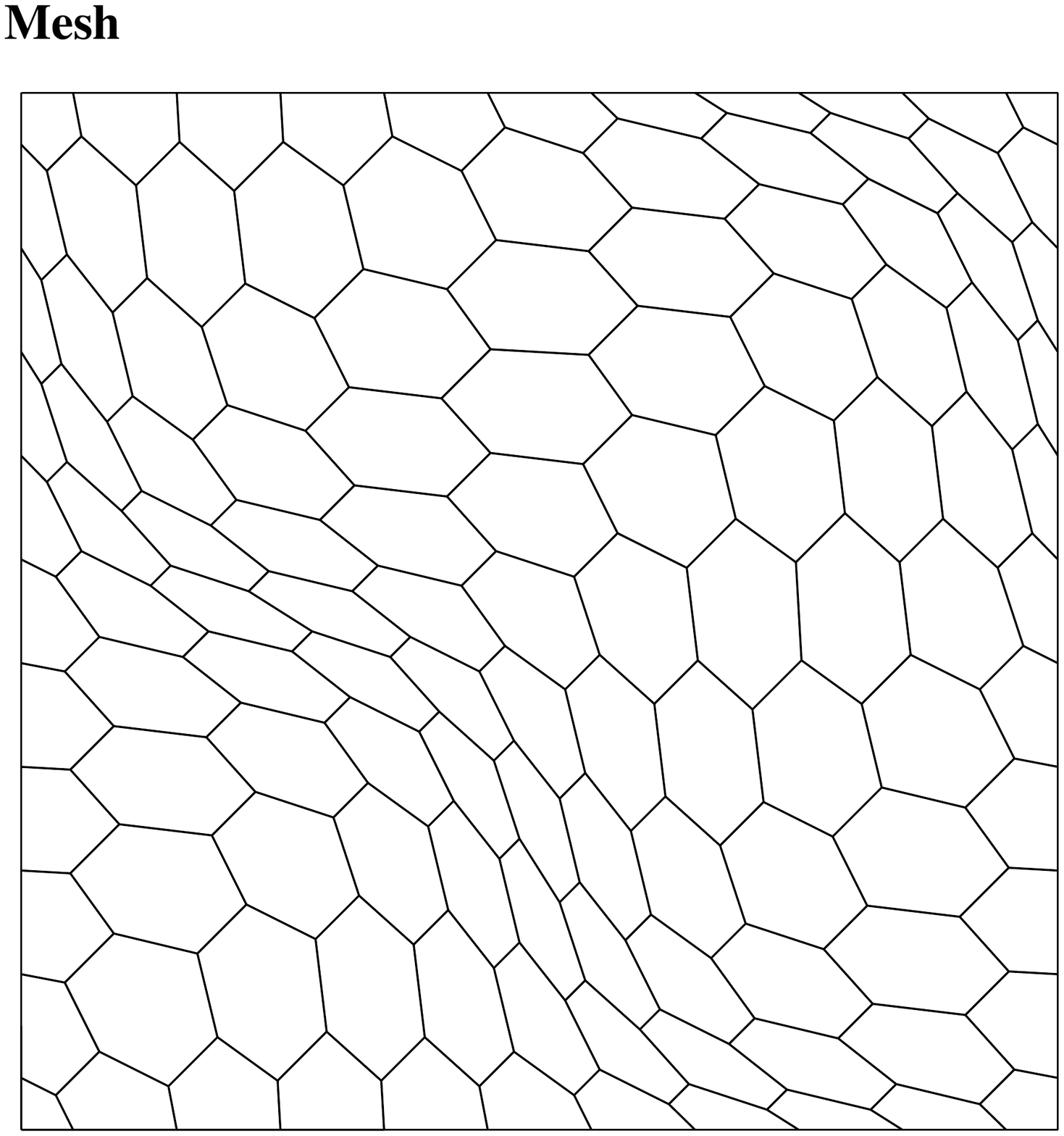}&
    \includegraphics[scale=0.135]{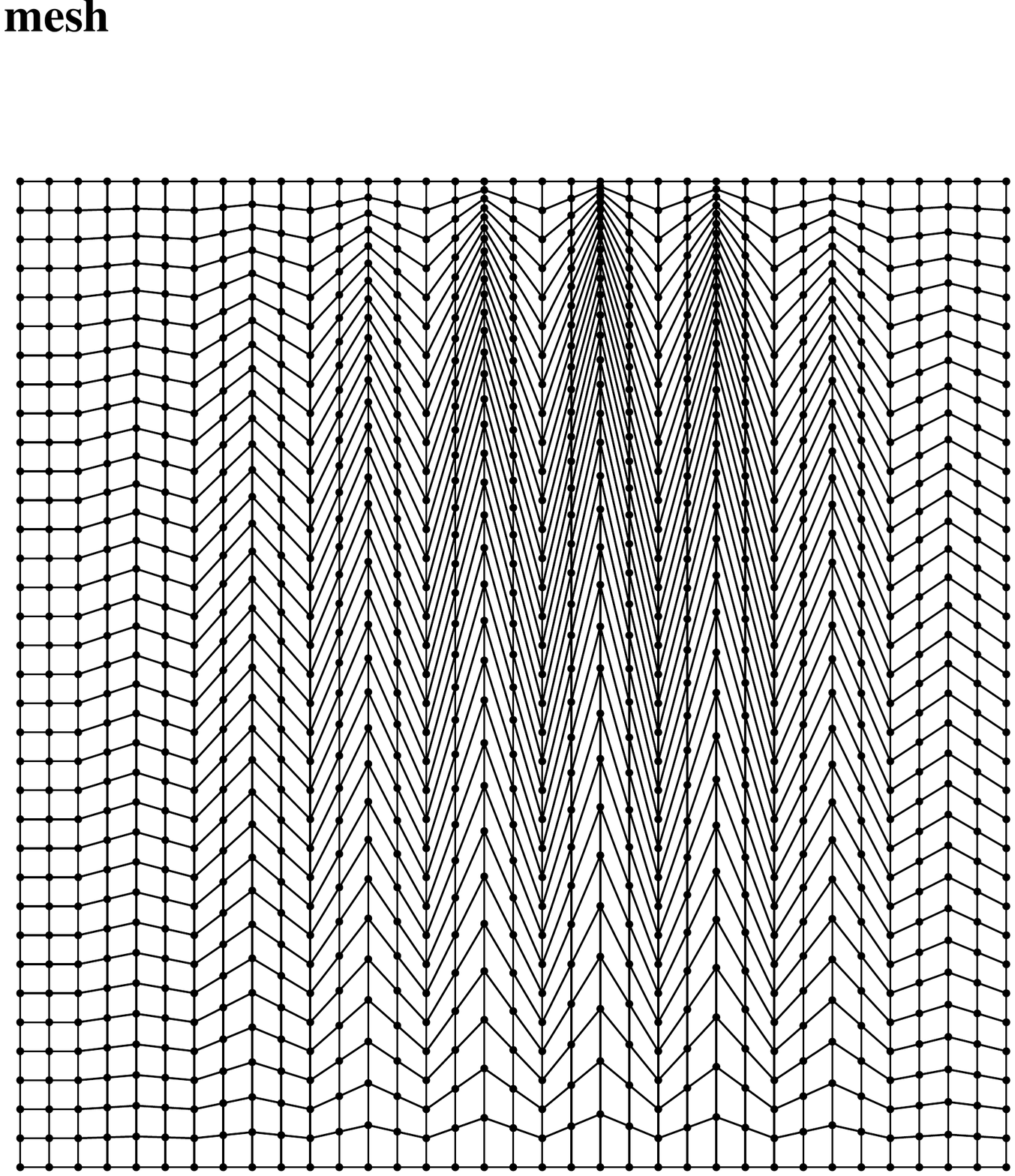} &
    \includegraphics[scale=0.135]{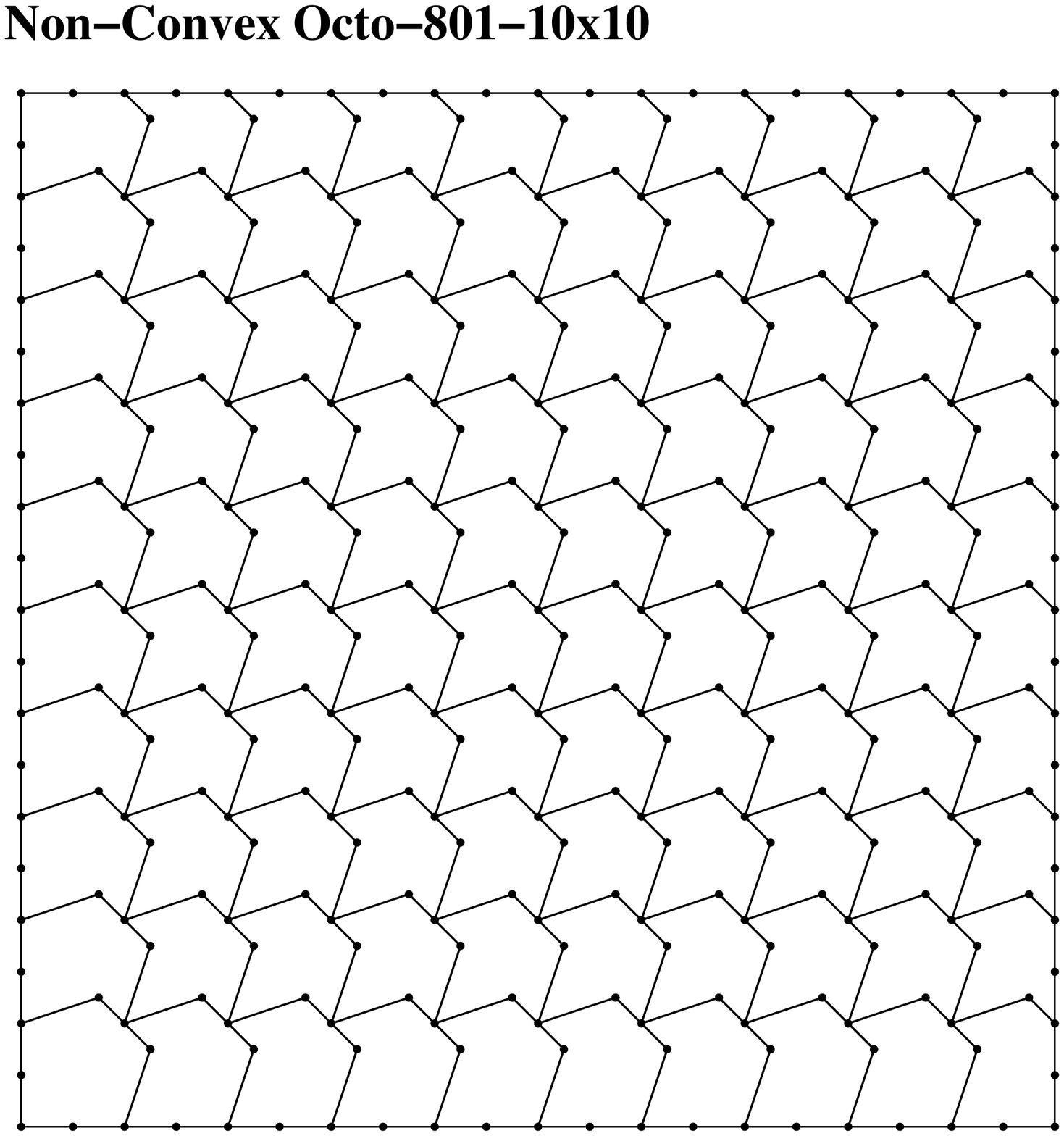}
    \\[1em]
    \text{(a)} & \text{(b)} & \text{(c)} & \text{(d)}
  \end{tabular}
  \caption{Base mesh (top row) and first refinement (bottom row) of
    the four mesh families: (a) regular hexagonal mesh; (b) remapped
    hexagonal mesh; (c) highly distorted quadrilateral mesh; (d)
    non-convex regular mesh. }
  \label{fig:Meshes}
  \vspace{-0.25cm}
\end{figure}

The meshes in \MeshONE{} are built by partitioning the domain $\Omega$
into regular hexagonal cells.
At the boundaries of $\Omega$ each mesh is completed by half hexagonal
cells.
The meshes in \MeshTWO{} are built as follows.
First, we determine a primal mesh by remapping the position
$(\HAT{x},\HAT{y})$ of the nodes of a uniform square partition of
$\Omega$ by the smooth coordinate
transformation: 
\begin{align*}
  x &= \HAT{x} + (1\slash{10}) \sin(2\pi\HAT{x})\sin(2\pi\HAT{y}),\\
  y &= \HAT{y} + (1\slash{10}) \sin(2\pi\HAT{x})\sin(2\pi\HAT{y}).
\end{align*}
The corresponding mesh of \MeshTWO{} is built from the primal mesh by
splitting each quadrilateral cell into two triangles and connecting
the barycenters of adjacent triangular cells by a straight segment.
The mesh construction is completed at the boundary by connecting the
barycenters of the triangular cells close to the boundary to the
midpoints of the boundary edges and these latters to the boundary
vertices of the primal mesh.
The meshes in \MeshTHREE{} are taken from the mesh suites of the
FVCA-6
Benchmark~\cite{Eymard-Henri-Herbin-Hubert-Klofkorn-Manzini:2011:FVCA6:benchmark:proc-peer},
and are formed by highly skewed quadrilateral cells.
The meshes in \MeshFOUR{} are obtained by filling $\Omega$ with a
suitably scaled non-convex octagonal reference cell.

All the meshes are parametrised by the number of partitions in each
direction.
The starting mesh of every sequence is built from a $5\times 5$
regular grid, and the refined meshes are obtained by doubling this
resolution.

All errors, computed as in
\cite{Benedetto-Berrone-Borio-Pieraccini-Scialo:2016a,Cangiani-Manzini-Sutton:2016},
are reported in Figs.~\ref{fig:M400}, \ref{fig:Md201}, \ref{fig:M103},
and~\ref{fig:M901}.
Error values are labeled by a circle for the nonconforming VEM and by
a square for the conforming VEM, that are stabilized by the method
developed in \cite{Benedetto-Berrone-Borio-Pieraccini-Scialo:2016a}.
Each figure shows the relative errors with respect to the maximum
diameter of the discretization, in the $L^2$ norms (left panel) and in
the $H^1$ norms (right panel).
In the same figures we report the slopes $k+1$ for the $\lebl{}$-norm
and $k$ for the $\sobh{1}{}$-norm.
The numerical results confirm the theoretical rate of convergence for
the $\sobh{1}{}$-norm.
The conforming and nonconforming VEMs provide very close results on
any fixed mesh, with the conforming method slightly over performing
the nonconforming VEM in few cases.

To test the robustness of the approach with respect to very large
P\'eclet numbers, we have performed some tests with values of $\K$ and
$\beta$ in the form of \eqref{eq:accuracy_test-K} and
\eqref{eq:accuracy_test-beta}, with $\alpha$ spanning a wide range of
orders of magnitude
($\alpha\in\left\{10^{-i}\colon i=4,\ldots,11\right\}$), with
$\gamma(x,y)=0$. In Figure \ref{fig:H1error_smallK} we display the
$\sobh{1}{}$ approximation error plotted with respect to the values of
$\alpha$, on two of the meshes previously used. We can see that, as
far as the presented tests are concerned, the error is bounded
independently of the values of $\alpha$, even on non convex polygons,
thus confirming the robustness of the approach.

\subsection{Approximation of internal and boundary layers}

The second test is the classic problem
from~\cite{Franca-Frey-Hughes:1992}.
The computational domain and the boundary conditions are as shown in
Figure~\ref{fig:test-2:domain}.
The velocity forms an angle $\theta$ with the x-axis, and propagates
the non-homogeneous boundary condition $u=1$ inside $\Omega$, thus
generating an internal discontinuity, which is numerically
approximated by an internal layer, a sharp transition between the
constant solution states $u=0$ and $1$.
The homogeneous boundary condition at the top of the computational
domain produces a boundary layer.
The diffusion coefficient is constant on $\Omega$ and given by
$\K=10^{-6}$, while the velocity is
$\beta=(\cos\,\theta,\sin\,\theta)$, and $\theta=\arctan(1)$.
The P\'eclet number is about $10^6$.
We solve this problem using the remapped and the regular hexagonal
meshes (see plots $(a)-(b)$ of Figure~\ref{fig:Meshes}), with
resolution $40\times 40$ (third refinement).
Figures~\ref{fig:test-case-2b} and~\ref{fig:test-case-2c} show the
results obtained with the conforming VEM
\cite{Benedetto-Berrone-Borio-Pieraccini-Scialo:2016a} (left panels)
and the nonconfoming VEM (right panels) for the polynomial degrees
$k=1$ and $k=3$.

The results are quite similar to those presented
in~\cite{Franca-Frey-Hughes:1992,%
  Benedetto-Berrone-Borio-Pieraccini-Scialo:2016a}, and are coherent
with the expected behaviour of the method.
Undershoots and overshoots are present near the internal layer, as is
normal for this problem.
However, by increasing the accuracy order of the VEM, the numerical
solution becomes smoother.
A thorough inspection of these plots also reveals that the
nonconforming VEM tends to provide a sharper internal layer than that
of the conforming VEM at the price of a relatively bigger amplitude of
the spurious oscillations in the transition region.


\begin{figure}[t]
  \hfill
  \begin{tabular}{cc}
    \begin{overpic}[width=0.40\textwidth]{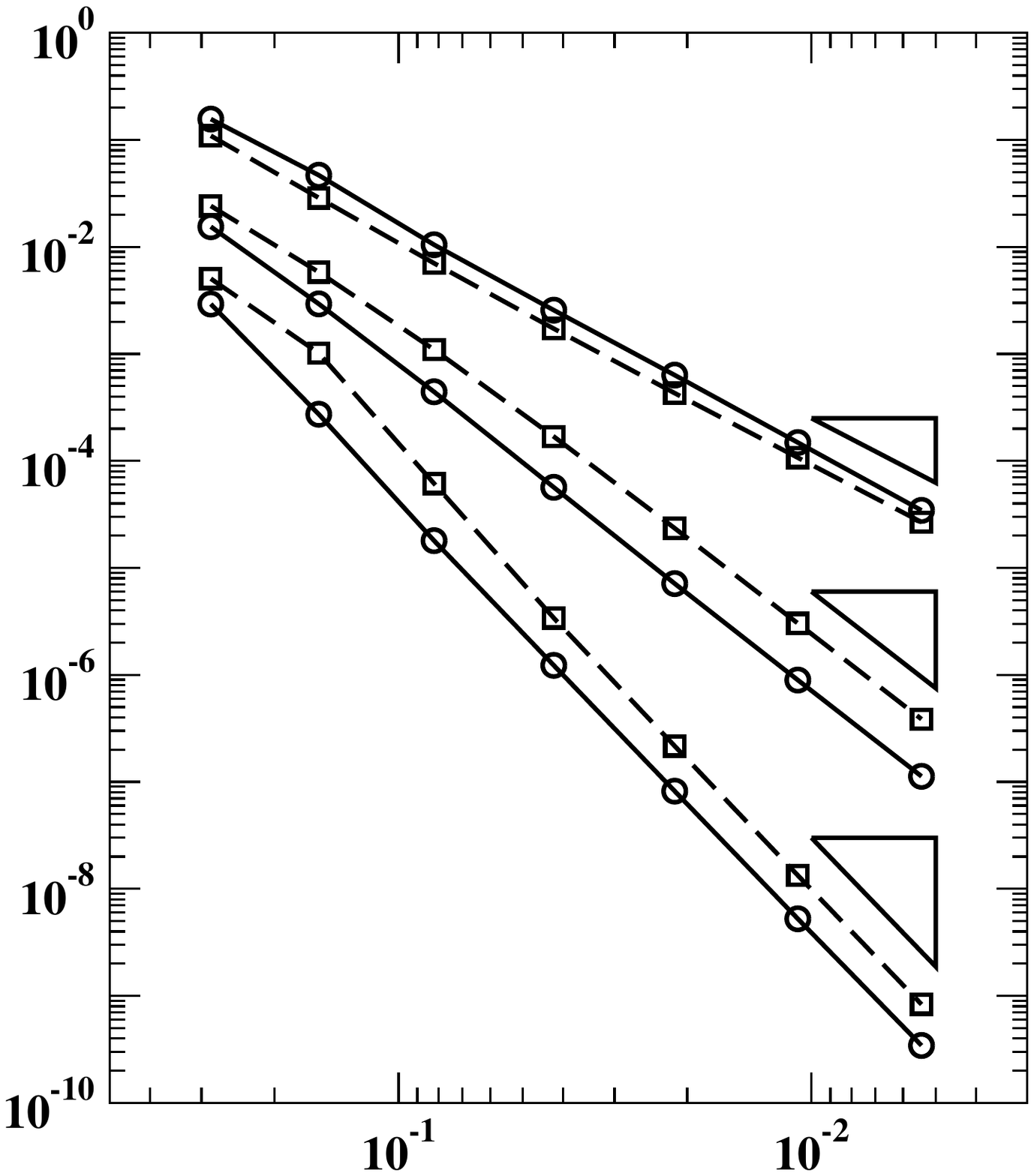}
      \put(31,-5){\textbf{Mesh size $h$}}
      \put(-5,13){\begin{sideways}\textbf{ $L^2$ Approximation
            Error}\end{sideways}} \put(71,66){\textbf{2}}
      \put(79,44){\textbf{3}} \put(79,23){\textbf{4}}
    \end{overpic}
    &\quad
    \begin{overpic}[width=0.40\textwidth]{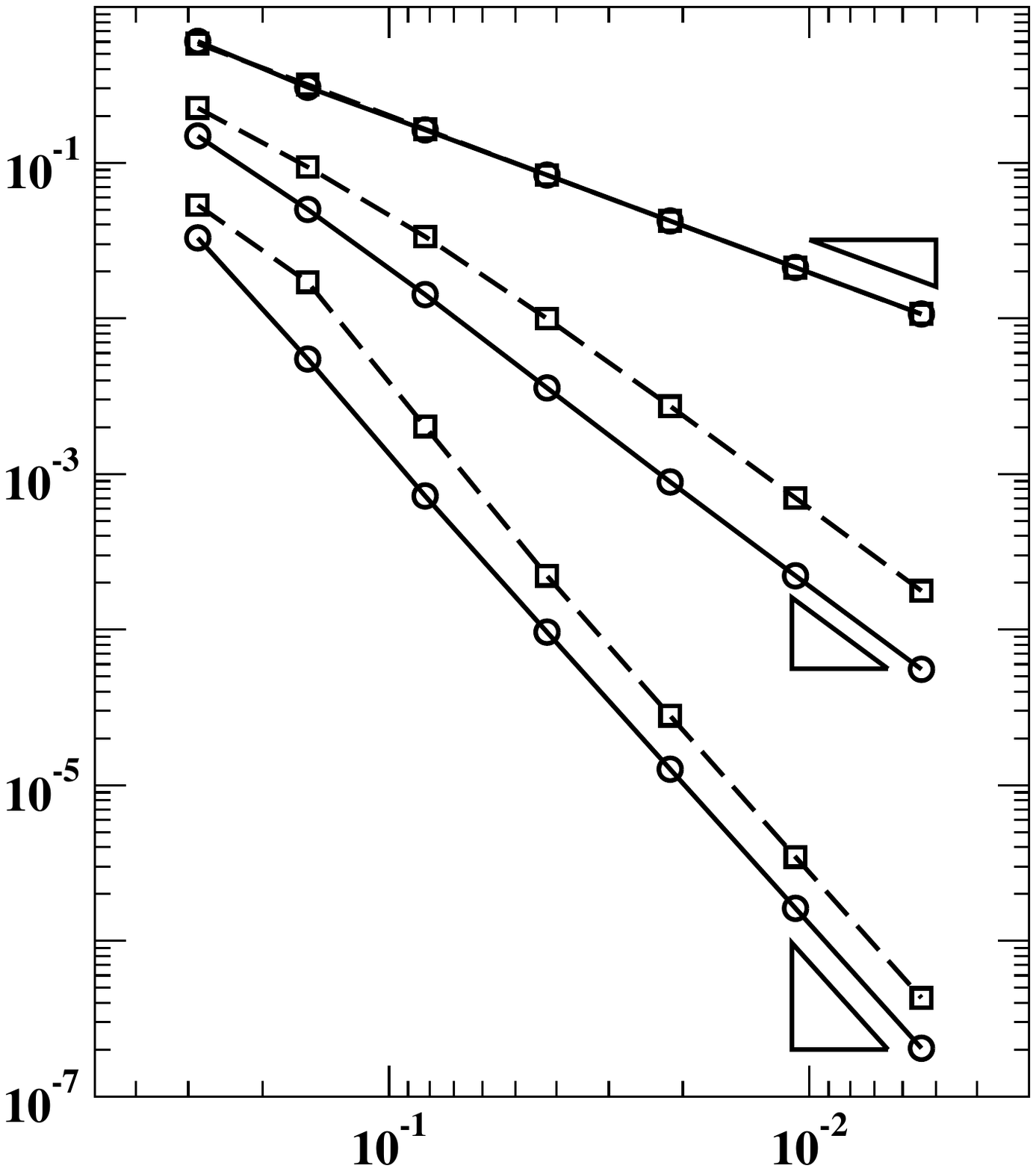}
      \put(31,-5){\textbf{Mesh size $h$}}
      \put(-5,13){\begin{sideways}\textbf{ $H^1$ Approximation
            Error}\end{sideways}} \put(71,78.5){$\mathbf{1}$}
      \put(71,54 ){$\mathbf{2}$} \put(71,28 ){$\mathbf{3}$}
    \end{overpic}
  \end{tabular}
  \caption{Relative approximation errors obtained using the conforming VEM (dashed lines 
  labeled with squares) and the nonconforming VEM (solid lines labeled with circles)
  for $k=1,2,3$ (from top to bottom).
  Calculations are carried out using the regular hexagonal meshes of 
  Figure~\ref{fig:Meshes}(a).
  Errors are measured in the $L^2$ norm (left panels) and $H^1$ norm
  (right panels), and plotted versus $h$.}
\label{fig:M400}
\vspace{-0.25cm}
\end{figure}


\begin{figure}[t]
  \hfill
  \begin{tabular}{cc}
    \begin{overpic}[width=0.40\textwidth]{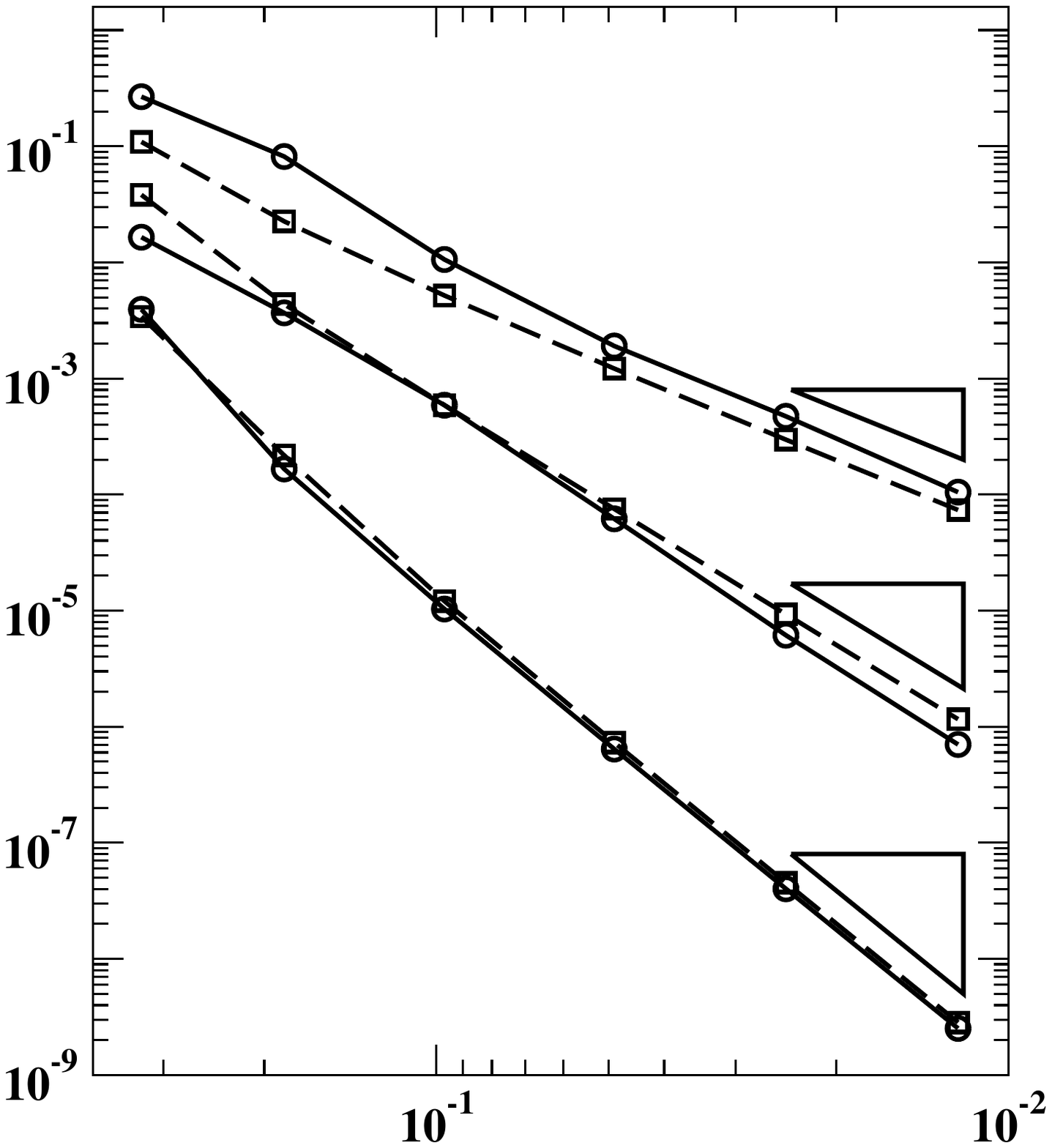}
      \put(31,-5){\textbf{Mesh size $h$}}
      \put(-5,13){\begin{sideways}\textbf{ $L^2$ Approximation
            Error}\end{sideways}} \put(72,69){\textbf{2}}
      \put(72,52){\textbf{3}} \put(72,30){\textbf{4}}
    \end{overpic}
    &\quad
    \begin{overpic}[width=0.40\textwidth]{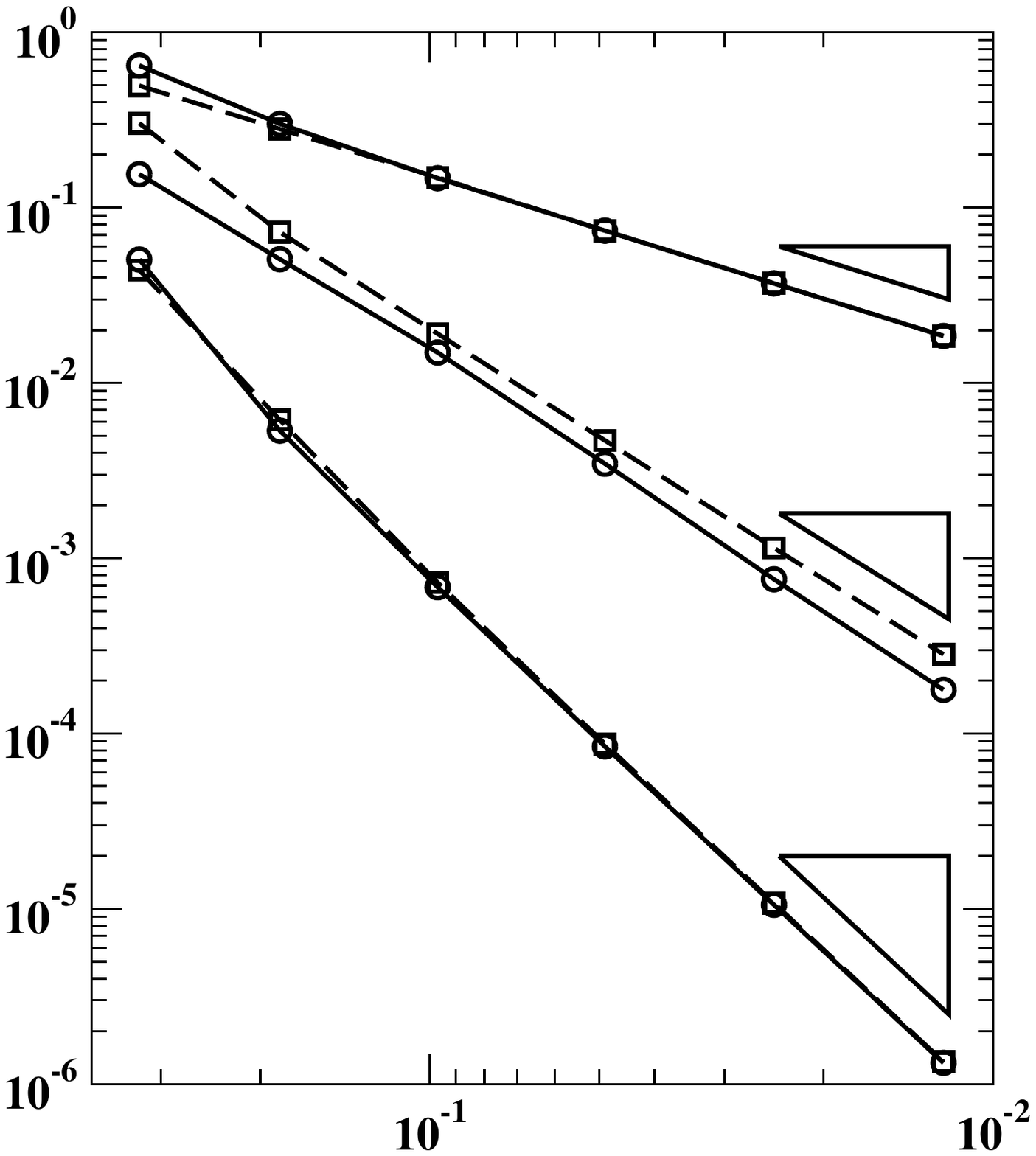}
      \put(31,-5){\textbf{Mesh size $h$}}
      \put(-5,13){\begin{sideways}\textbf{ $H^1$ Approximation
            Error}\end{sideways}} \put(72,79){$\textbf{1}$}
      \put(72,54){$\textbf{2}$} \put(72,28){$\textbf{3}$}
    \end{overpic}
  \end{tabular}
  \caption{Relative approximation errors obtained using the conforming VEM (dashed lines 
  labeled with squares) and the nonconforming VEM (solid lines labeled with circles)
  for $k=1,2,3$ (from top to bottom).
  Calculations are carried out using the remapped hexagonal meshes of 
  Figure~\ref{fig:Meshes}(b).
  Errors are measured in the $L^2$ norm (left panels) and $H^1$ norm
  (right panels), and plotted versus $h$.}
\label{fig:Md201}
\vspace{-0.25cm}
\end{figure}


\begin{figure}[t]
  \hfill
  \begin{tabular}{cc}
    \begin{overpic}[width=0.40\textwidth]{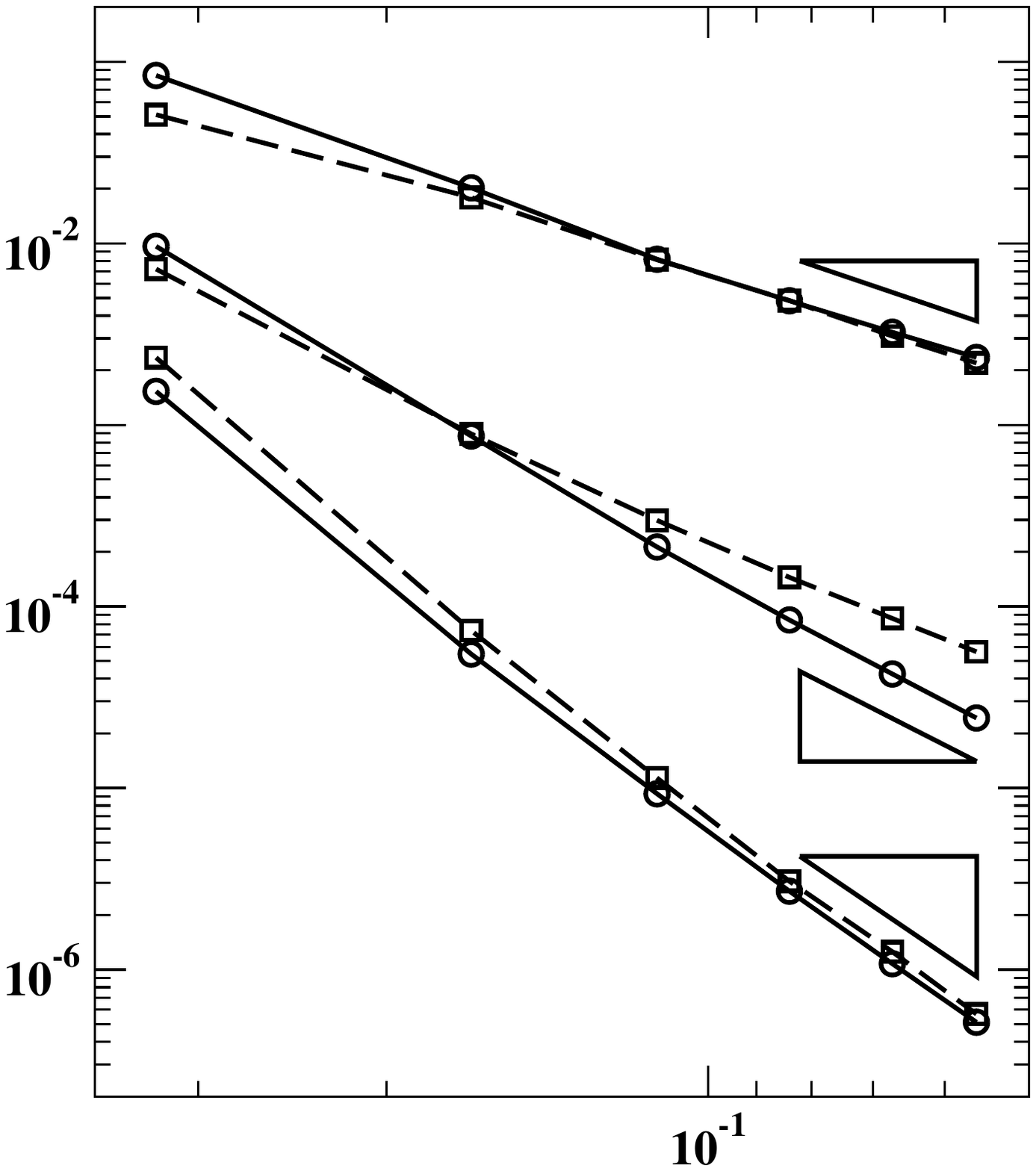}
      \put(31,-5){\textbf{Mesh size $h$}}
      \put(-5,13){\begin{sideways}\textbf{ $L^2$ Approximation
            Error}\end{sideways}} \put(73,79){\textbf{2}}
      \put(73,56){\textbf{3}} \put(72,40){\textbf{4}}
    \end{overpic}
    &\quad
    \begin{overpic}[width=0.40\textwidth]{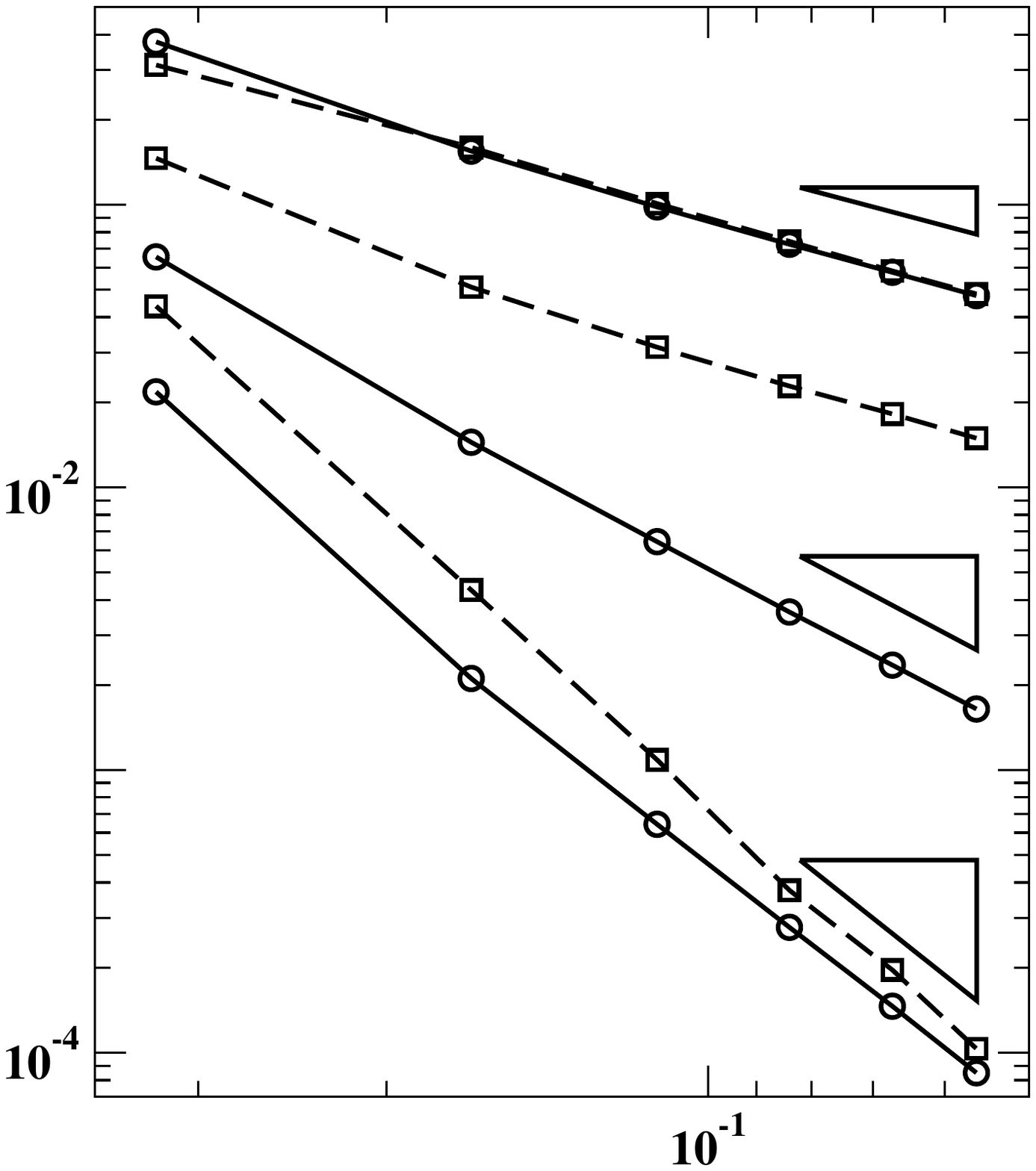}
      \put(31,-5){\textbf{Mesh size $h$}}
      \put(-5,13){\begin{sideways}\textbf{ $H^1$ Approximation
            Error}\end{sideways}} \put(72,81){\textbf{1}}
      \put(72,62){\textbf{2}} \put(72,37){\textbf{3}}
    \end{overpic}
  \end{tabular}
  \caption{Relative approximation errors obtained using the conforming VEM (dashed lines 
  labeled with squares) and the nonconforming VEM (solid lines labeled with circles)
  for $k=1,2,3$ (from top to bottom).
  Calculations are carried out using the highly distorted quadrilateral meshes of 
  Figure~\ref{fig:Meshes}(c).
  Errors are measured in the $L^2$ norm (left panels) and $H^1$ norm
  (right panels), and plotted versus $h$.}
\label{fig:M103}
\vspace{-0.25cm}
\end{figure}


\begin{figure}[t]
  \hfill
  \begin{tabular}{cc}
    \begin{overpic}[width=0.40\textwidth]{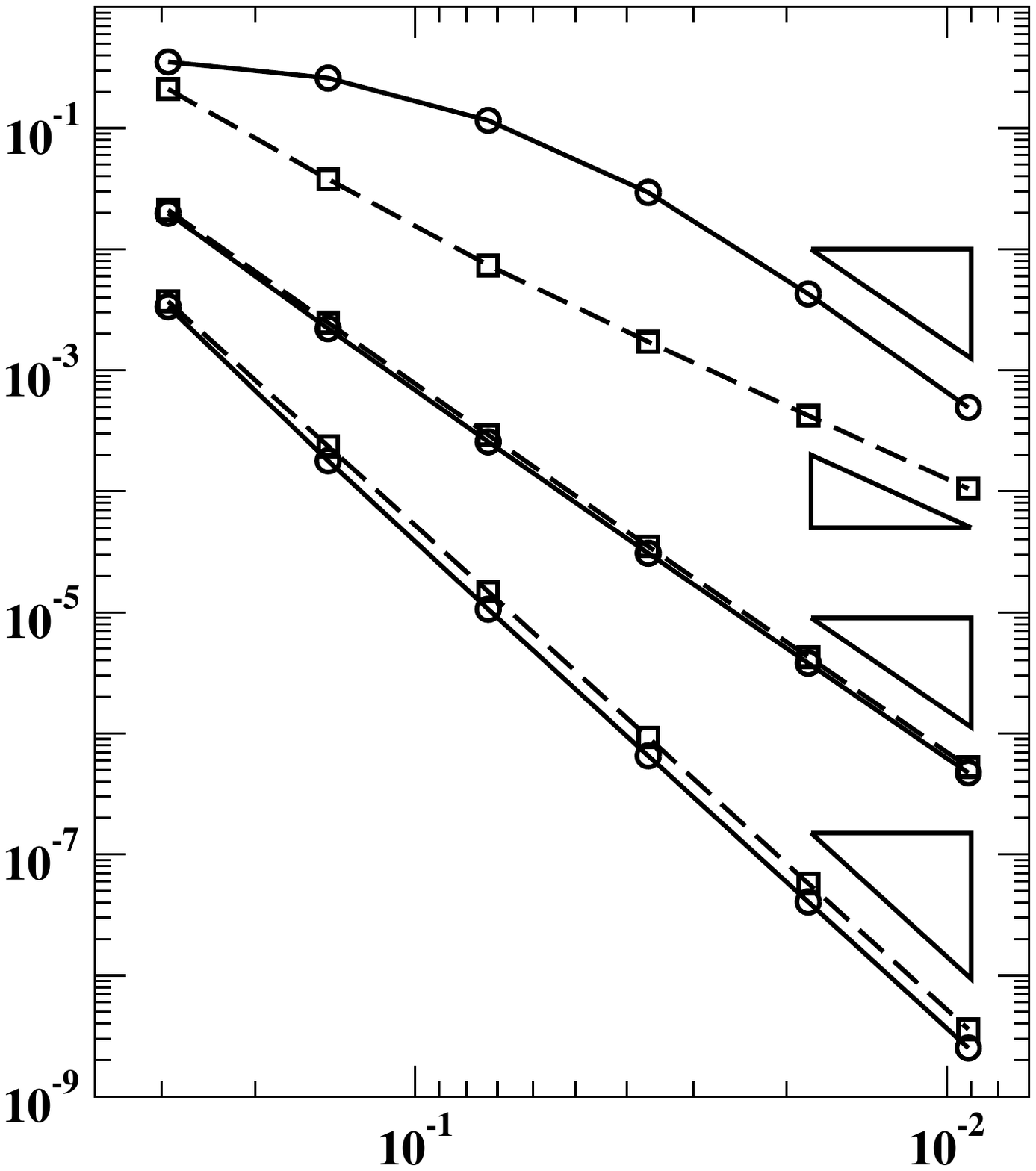}
      \put(31,-5){\textbf{Mesh size $h$}}
      \put(-5,13){\begin{sideways}\textbf{ $L^2$ Approximation
            Error}\end{sideways}} \put(72,67){\textbf{2}}
      \put(72,50){\textbf{3}} \put(72,32){\textbf{4}}
    \end{overpic}
    &\quad
    \begin{overpic}[width=0.40\textwidth]{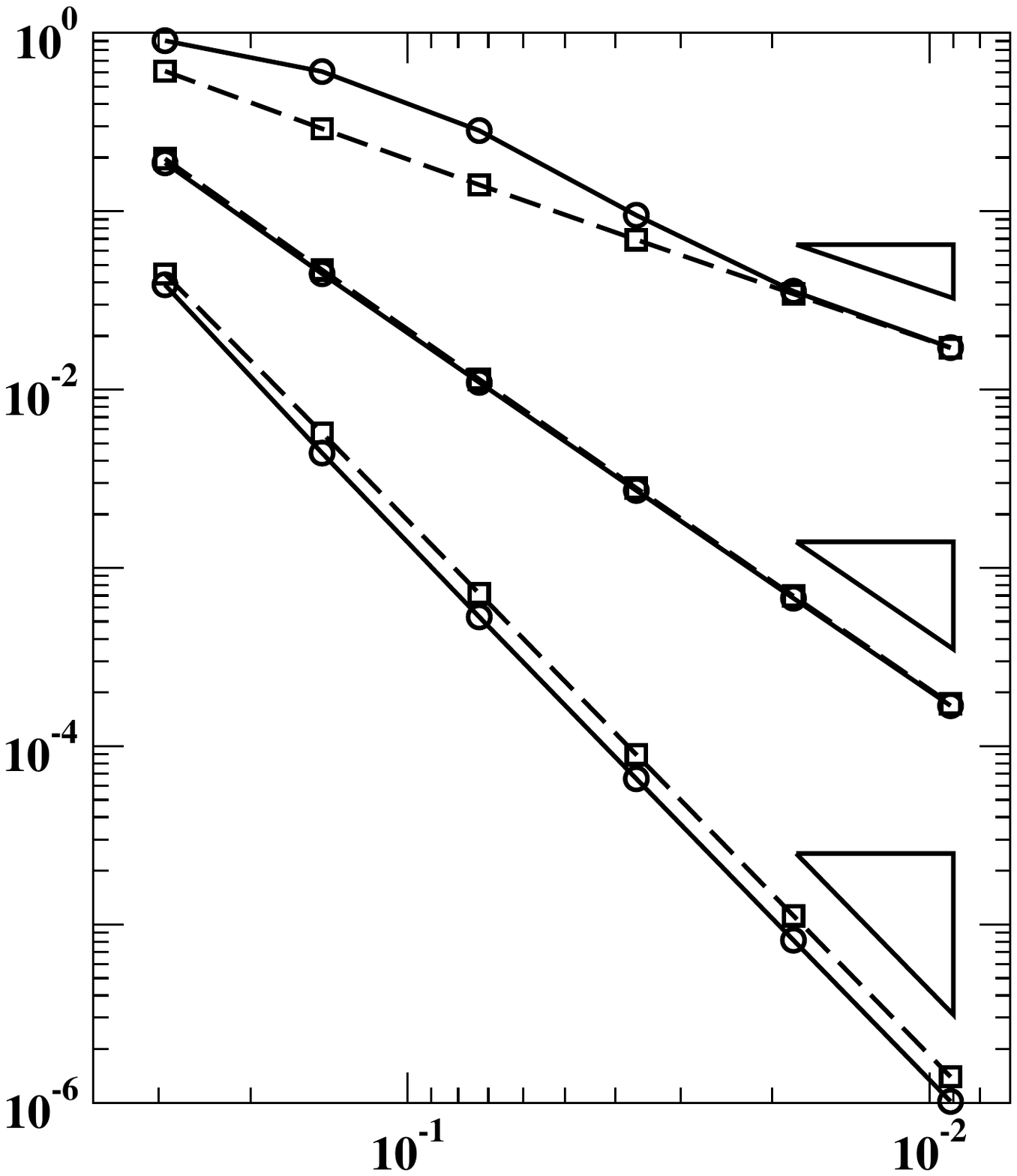}
      \put(31,-5){\textbf{Mesh size $h$}}
      \put(-5,13){\begin{sideways}\textbf{ $H^1$ Approximation
            Error}\end{sideways}} \put(72,79){\textbf{1}}
      \put(72,55){\textbf{2}} \put(72,28){\textbf{3}}
    \end{overpic}
  \end{tabular}
  \caption{Relative approximation errors obtained using the conforming VEM (dashed lines 
  labeled with squares) and the nonconforming VEM (solid lines labeled with circles)
  for $k=1,2,3$ (from top to bottom).
  Calculations are carried out using the non convex meshes of 
  Figure~\ref{fig:Meshes}(d).
  Errors are measured in the $L^2$ norm (left panels) and $H^1$ norm
  (right panels), and plotted versus $h$.}
\label{fig:M901}
\vspace{-0.25cm}
\end{figure}

\begin{figure}
  \centering
  \begin{tabular}{cc}
    \hspace{0.2cm}\textbf{Conforming VEM} & \hspace{0.6cm}\textbf{Nonconforming VEM}\\[0.25em]
    \begin{overpic}[width=0.40\textwidth]{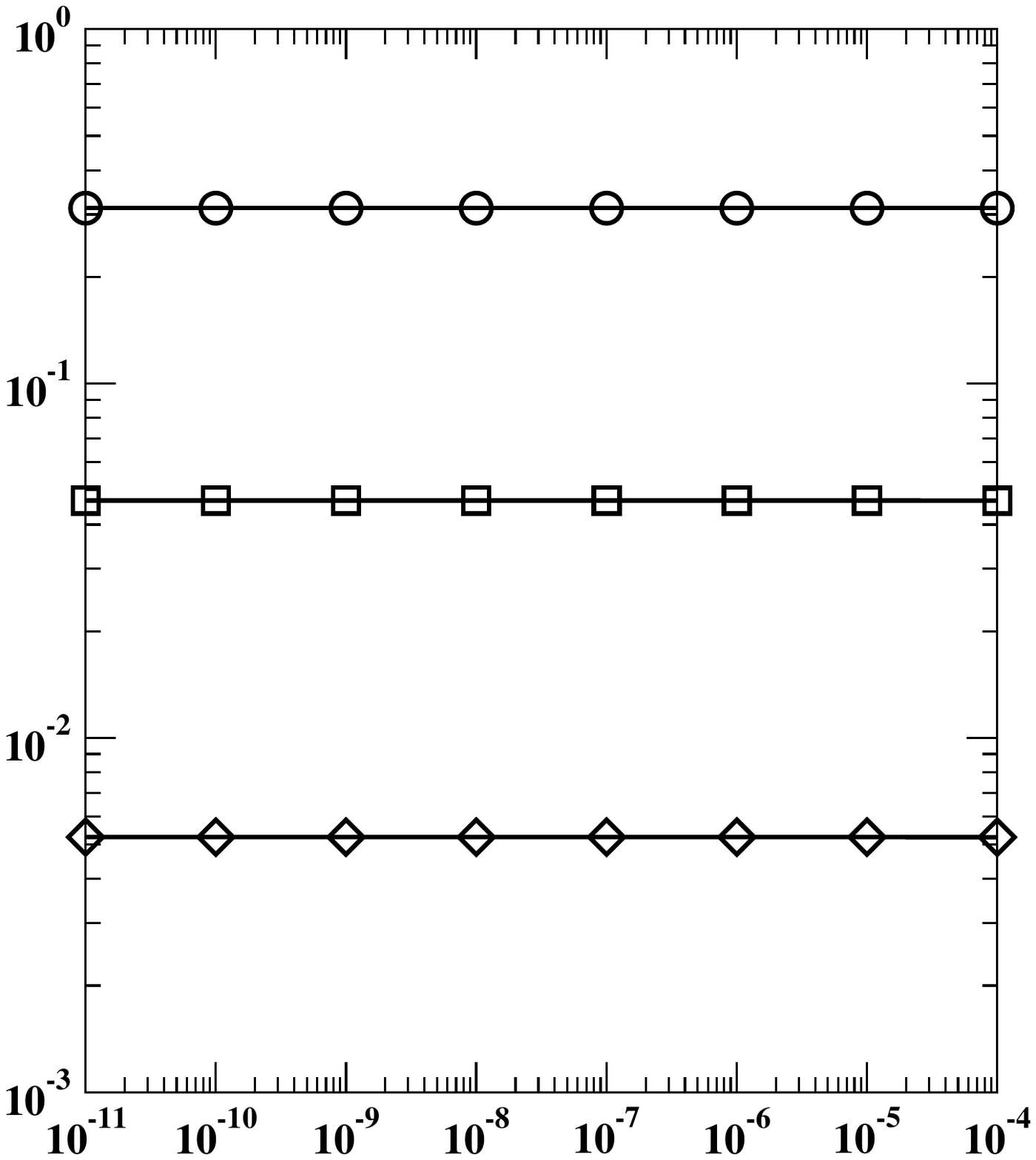}
      \put(15,84){$\mathbf{k=1}$} \put(15,60){$\mathbf{k=2}$}
      \put(15,32){$\mathbf{k=3}$} \put(29,-5){\textbf{Coefficient
          $\alpha$}} \put(-5,17){\begin{sideways}\textbf{
            $H^1$Approximation Error }\end{sideways}}
    \end{overpic}
    &\quad
    \begin{overpic}[width=0.40\textwidth]{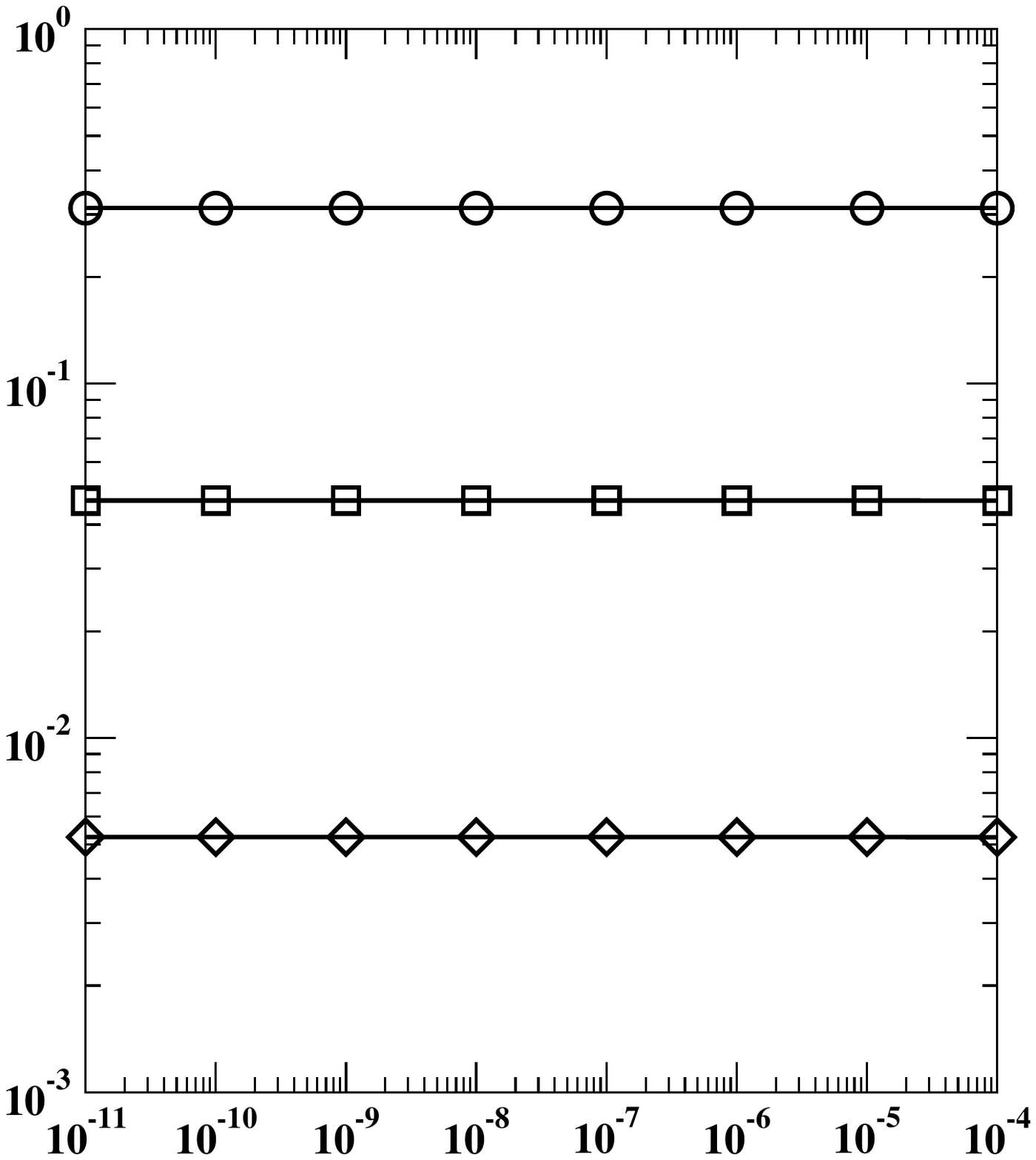}
    \put(15,84.0){$\mathbf{k=1}$}
    \put(15,60.5){$\mathbf{k=2}$}
    \put(15,32.5){$\mathbf{k=3}$}
    \put(29,-5){\textbf{Coefficient $\alpha$}}
    \put(-5,17){\begin{sideways}\textbf{$H^1$ Approximation Error}\end{sideways}}
    \end{overpic}
    \\[2em]
    \multicolumn{2}{c}{\begin{large}\textbf{Mesh of regular hexagons (a)}\end{large}}
    \\[1.5em]
    \hspace{0.2cm}\textbf{Conforming VEM} & \hspace{0.6cm}\textbf{Nonconforming VEM}\\[0.25em]
    \begin{overpic}[width=0.40\textwidth]{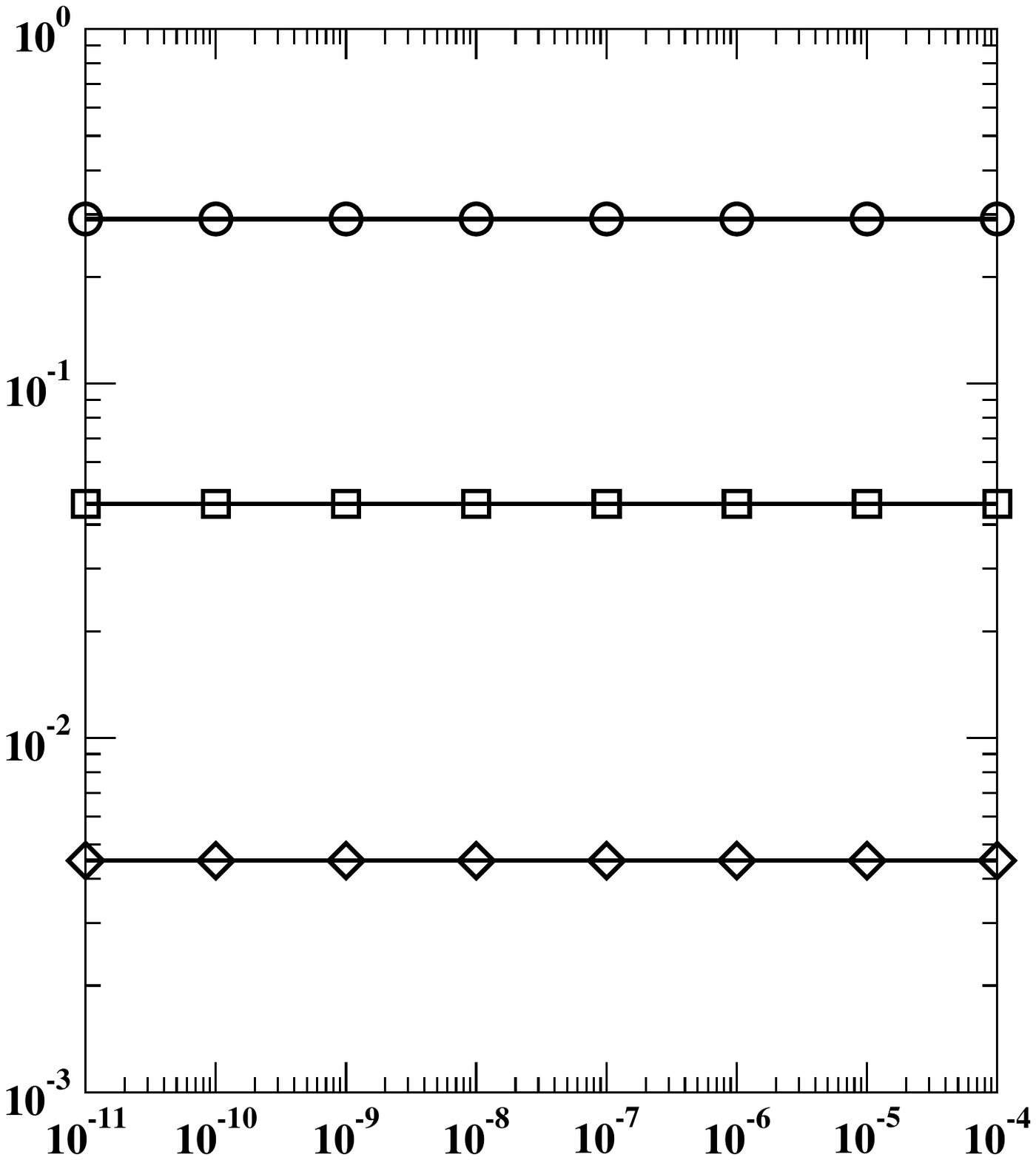}
      \put(15,83){$\mathbf{k=1}$} \put(15,60){$\mathbf{k=2}$}
      \put(15,30){$\mathbf{k=3}$} \put(29,-5){\textbf{Coefficient
          $\alpha$}} \put(-5,17){\begin{sideways}\textbf{ $H^1$
            Approximation Error }\end{sideways}}
    \end{overpic}
    &\quad
    \begin{overpic}[width=0.40\textwidth]{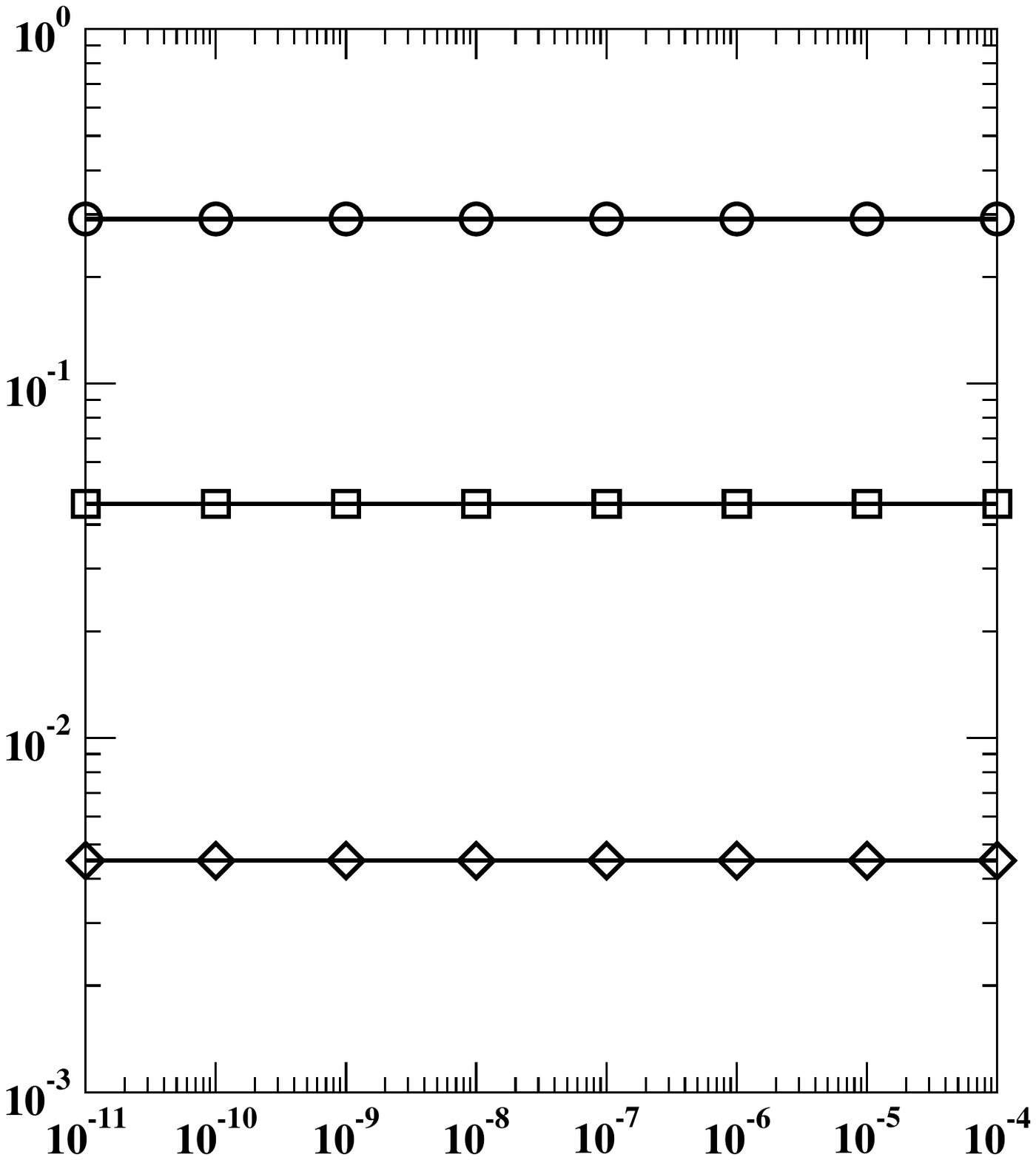}
    \put(15,83){$\mathbf{k=1}$}
    \put(15,60){$\mathbf{k=2}$}
    \put(15,30){$\mathbf{k=3}$}
    \put(29,-5){\textbf{Coefficient $\alpha$}}
    \put(-5,17){\begin{sideways}\textbf{ $H^1$ Approximation Error }\end{sideways}}
    \end{overpic}
    \\[2.em]
    \multicolumn{2}{c}{\begin{large}\textbf{Mesh of regular non convex cells (d)}\end{large}}
    \\
  \end{tabular}
  \caption{$H^1$ relative approximation error versus the viscous
    coefficient $\alpha\in[10^{-11},10^{-4}]$ using the first refined
    mesh of mesh families $(a)$ (top panels) and $(d)$ (bottom panels)
    for the test case with $\gamma(x,y)=0$. The problem is solved by
    applying the conforming VEM (left panel) and nonconforming VEM
    (right panel) of degree $k=1$ (circles), $k=2$ (squares), $k=3$
    (diamonds).}
  \label{fig:H1error_smallK}
\end{figure}


\begin{figure}[t]
  \centering
  \begin{overpic}[scale=0.27]{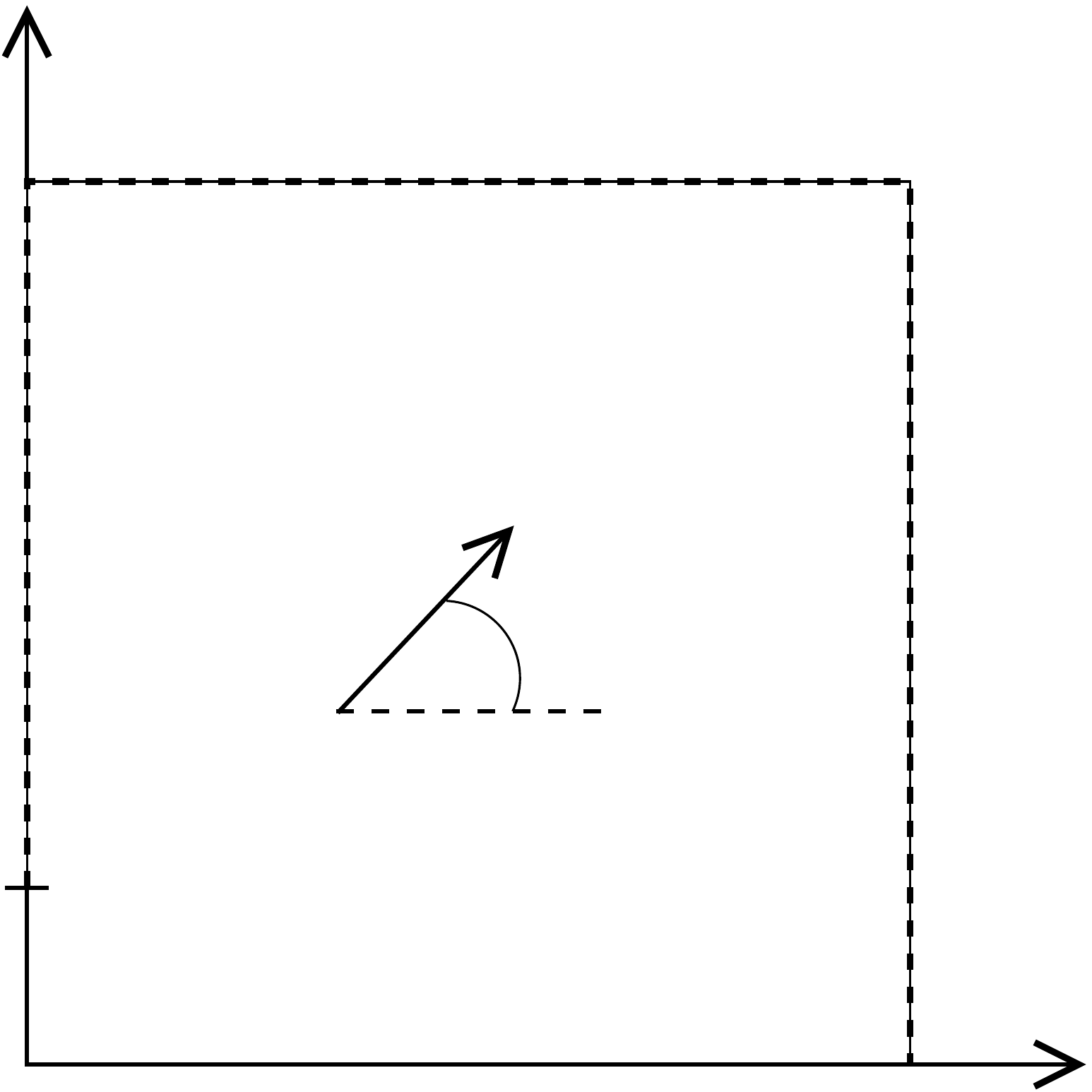}
    \put(95,-5){$\mathbf{x}$} \put(-5,95){$\mathbf{y}$}
    \put(76,-4){$\mathbf{x\!\!=\!\!1}$}
    \put(-14,82){$\mathbf{y\!\!=\!\!1}$}
    \put(7,17){$\mathbf{y\!\!=\!\!0.2}$}
    \put(35,-5){$\mathbf{u=1}$} \put(35,87){$\mathbf{u=0}$}
    \put(-20,10){$\mathbf{u=1}$} \put(-20,48){$\mathbf{u=0}$} \put(88,
    42){$\mathbf{u=0}$}
    \put(30,54){\textbf{velocity}} \put(50,41){$\bm{\theta}$}
  \end{overpic}
  \vspace{0.25cm}
  \caption{Test~2: domain and boundary conditions.}
  \label{fig:test-2:domain}
  \vspace{-0.25cm}
\end{figure}

\begin{figure}
  \centering
  \begin{tabular}{cc}
    \includegraphics[scale=0.14]{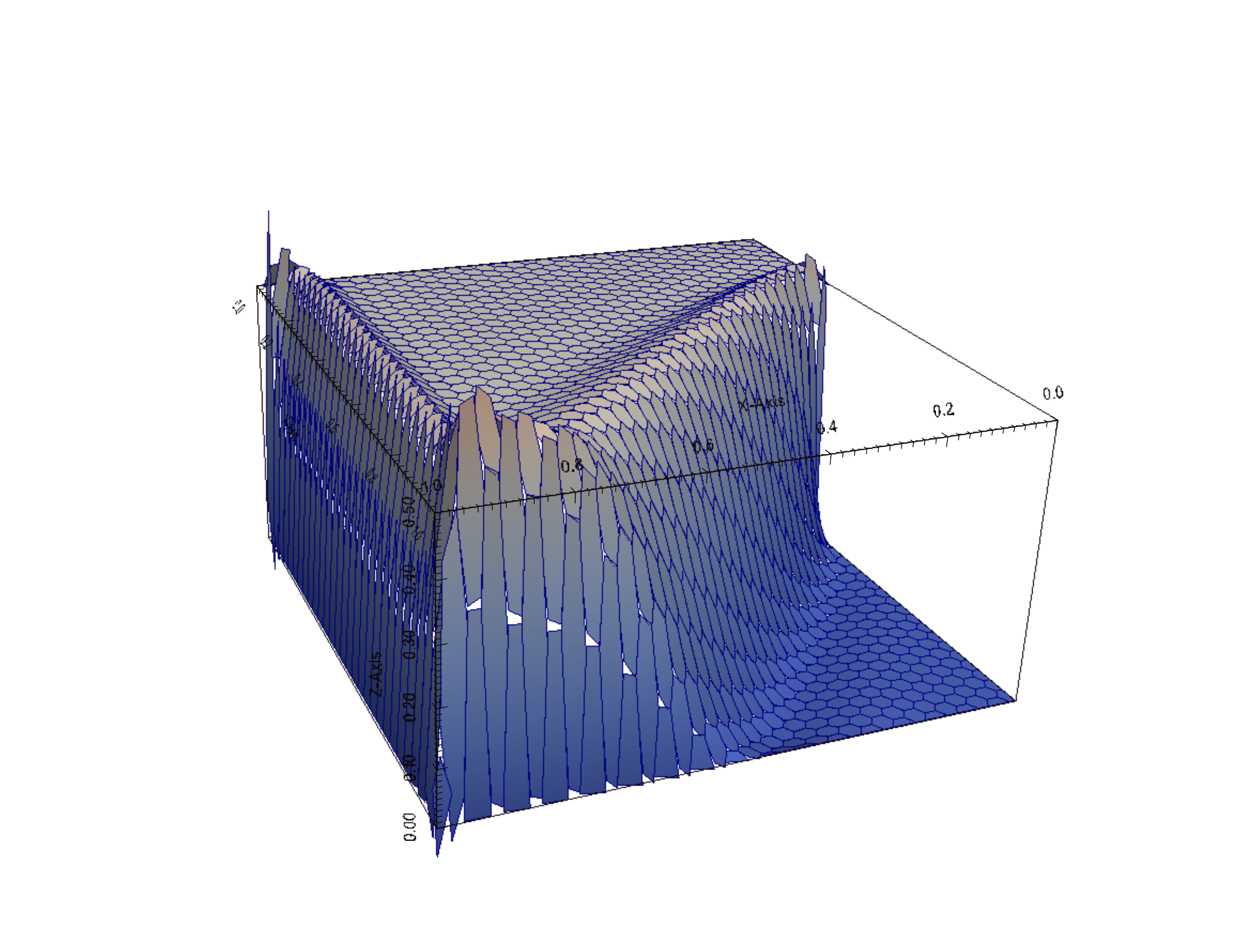}&           
    \includegraphics[scale=0.14]{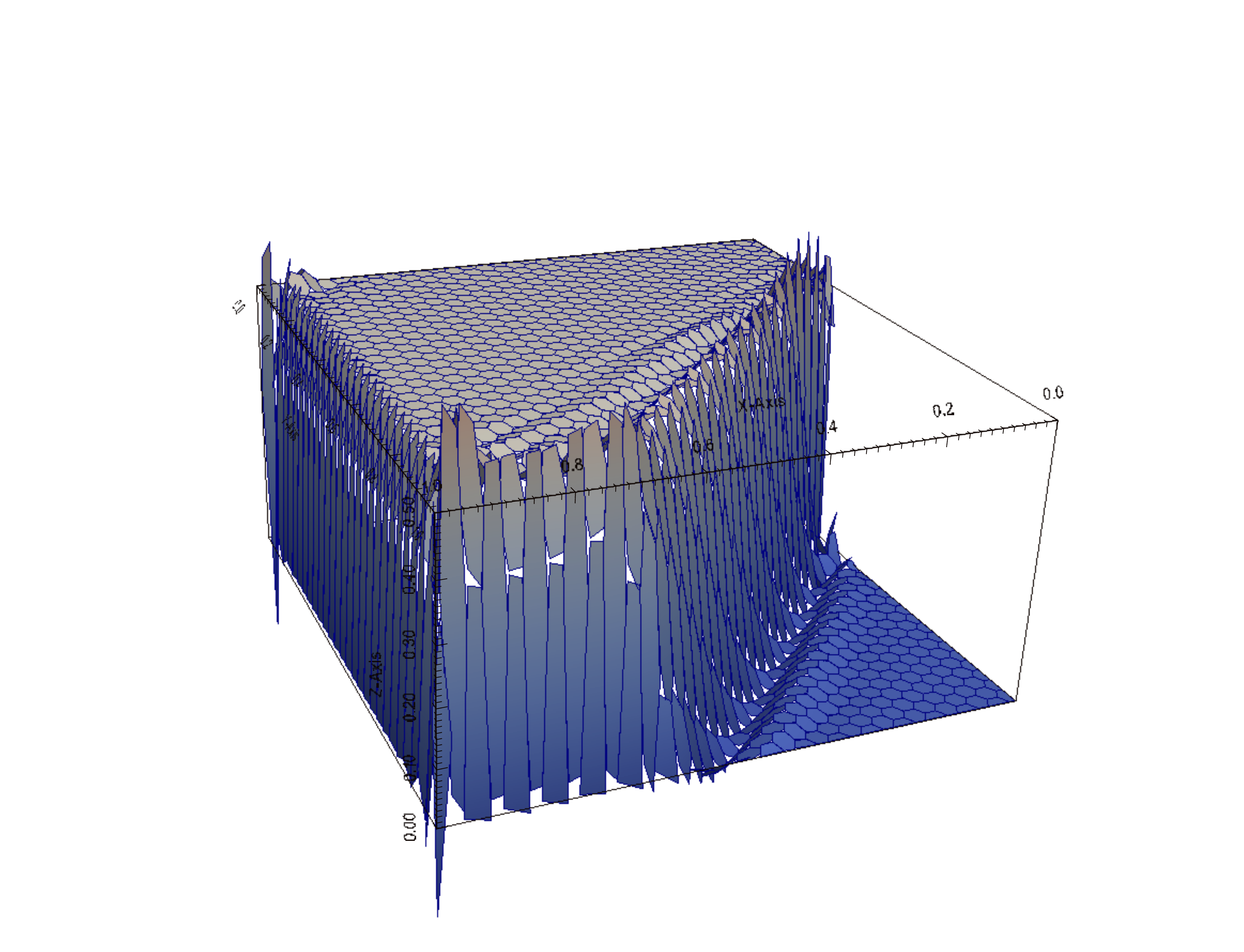}\\[0.175em] 
    \multicolumn{2}{c}{$\mathbf{k=1}$}\\[1em]
    \includegraphics[scale=0.14]{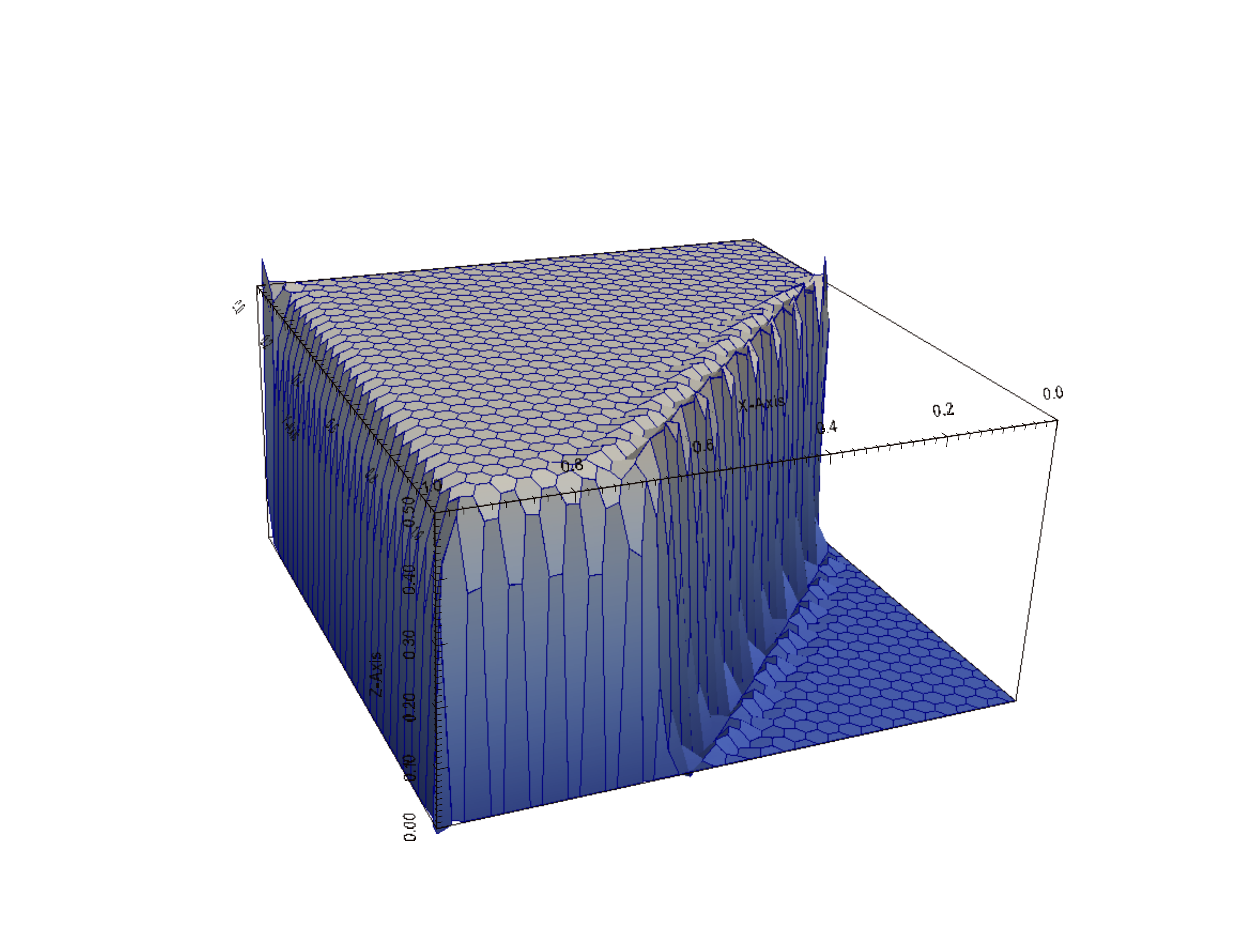}&           
    \includegraphics[scale=0.14]{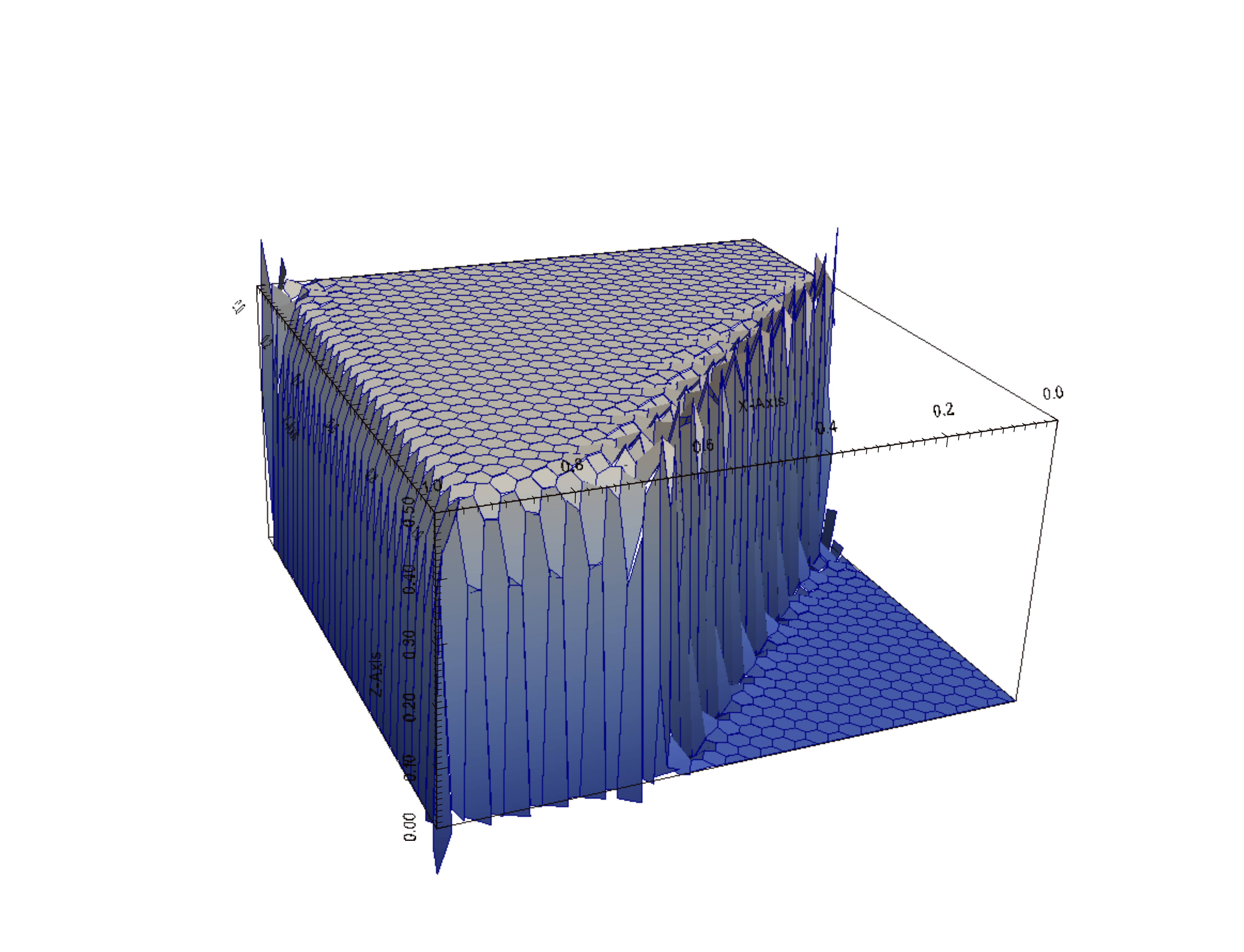}\\[0.175em] 
    \multicolumn{2}{c}{$\mathbf{k=3}$}
  \end{tabular}
  \caption{Test Case~2: conforming VEM (left panels) and nonconforming
    VEM (right panels) for $k=1$ and $k=3$ and using a regular
    hexagonal mesh (third refinement). }
  \label{fig:test-case-2b}
  \centering
  \begin{tabular}{cc}
    \includegraphics[scale=0.14]{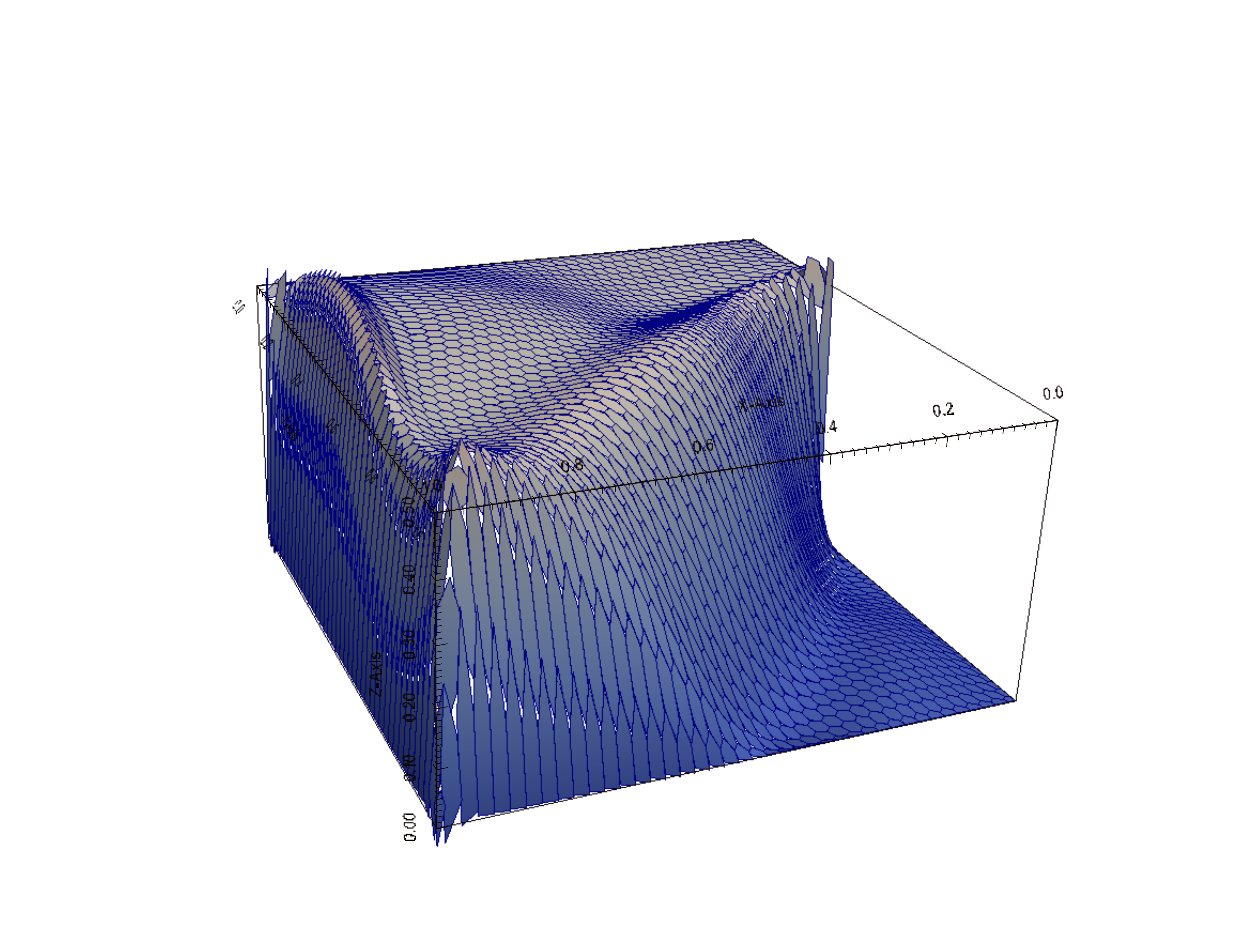}&           
    \includegraphics[scale=0.14]{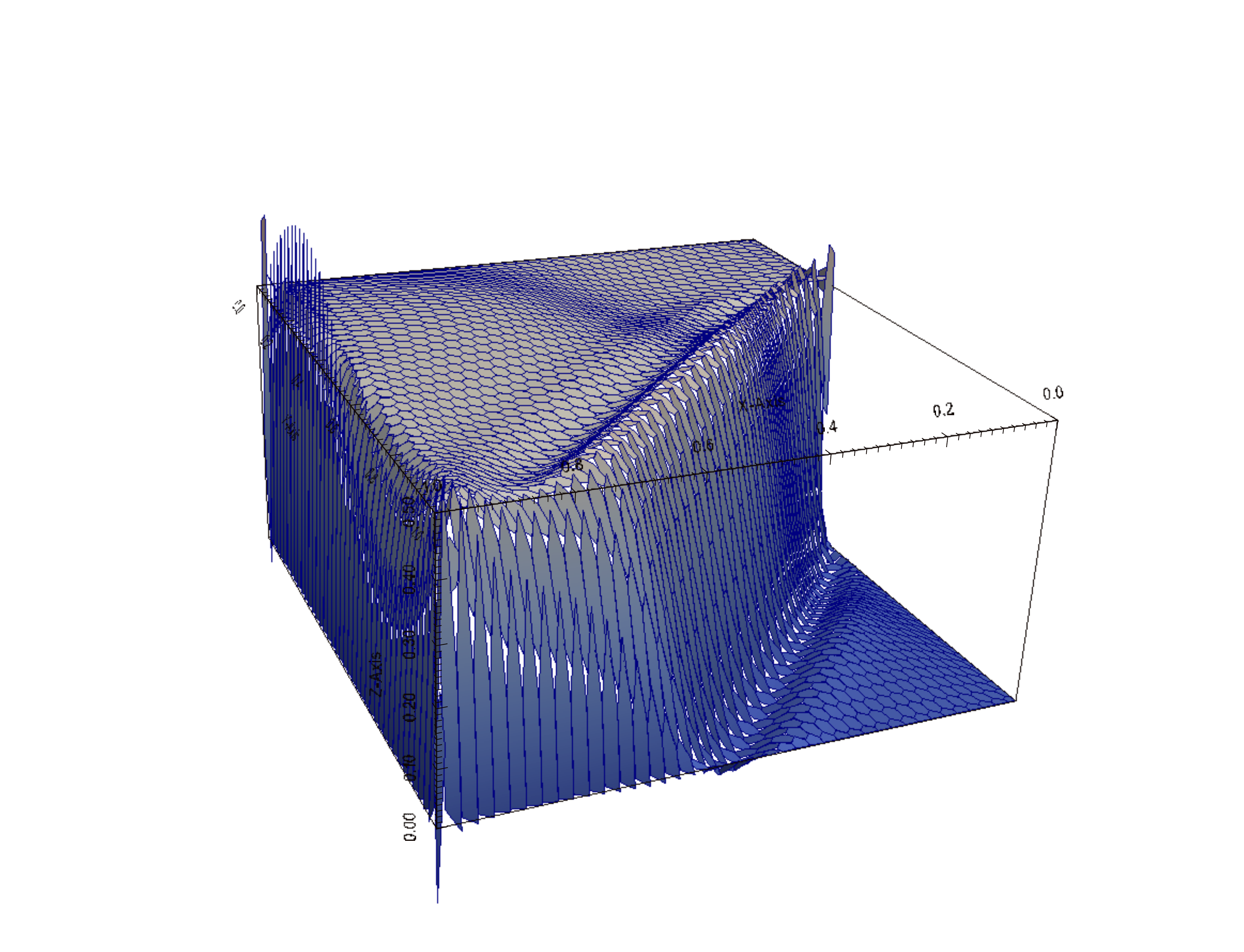}\\[0.175em] 
    \multicolumn{2}{c}{$\mathbf{k=1}$}\\[1em]
    \includegraphics[scale=0.14]{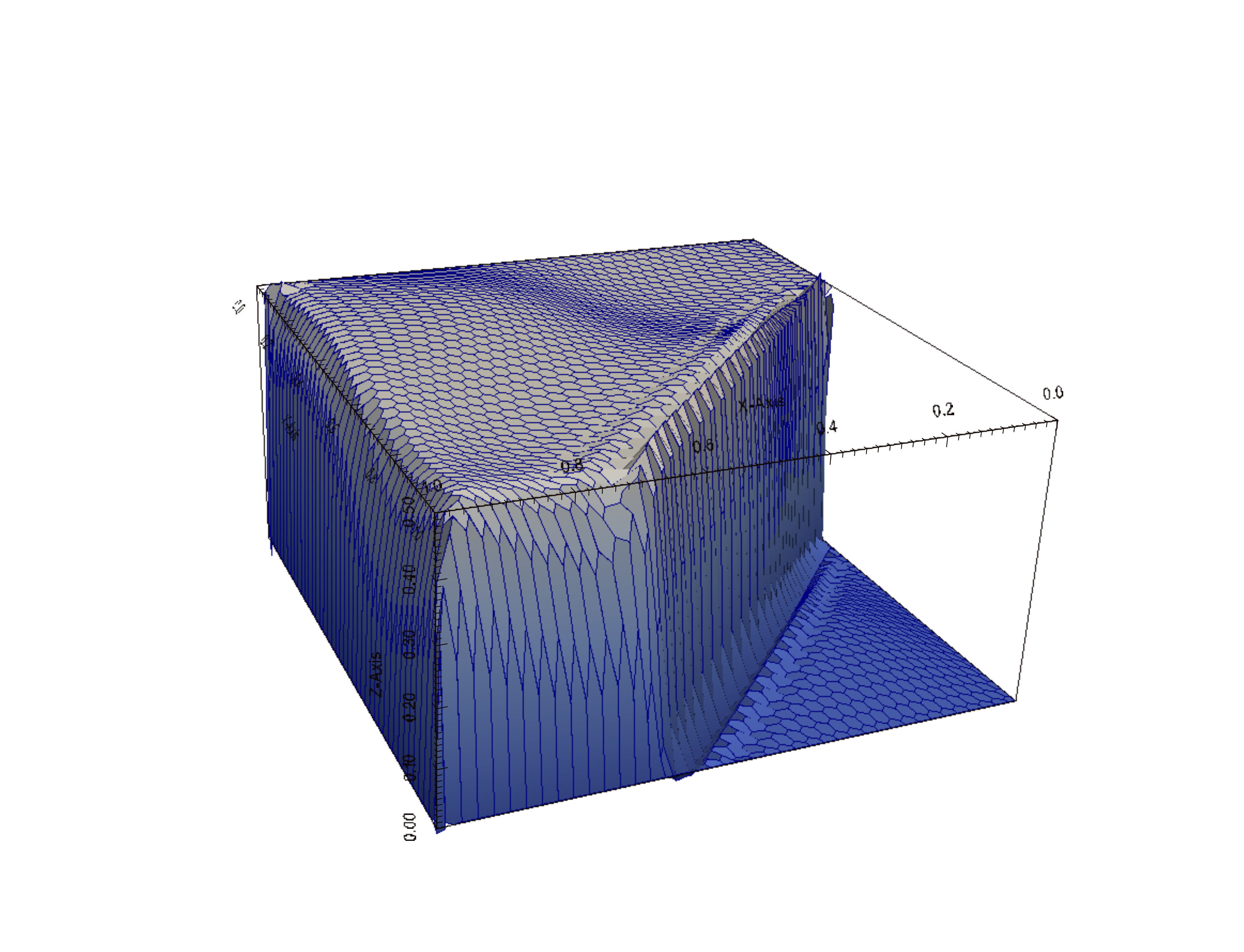}&           
    \includegraphics[scale=0.14]{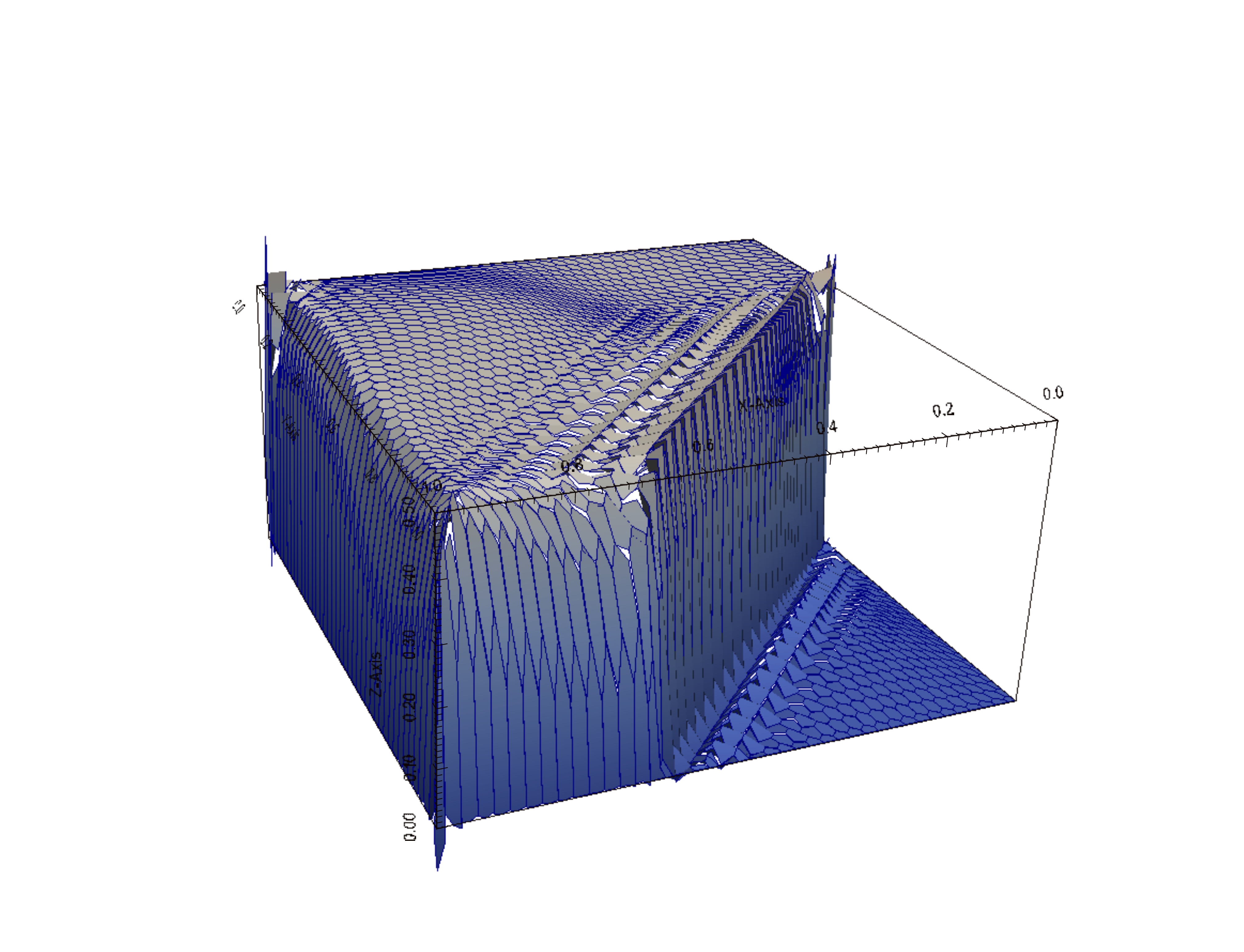}\\[0.175em] 
    \multicolumn{2}{c}{$\mathbf{k=3}$}
  \end{tabular}
  \caption{Test Case~2: conforming VEM (left panels) and nonconforming
    VEM (right panels) for $k=1$ and $k=3$ and using a remapped
    hexagonal mesh (third refinement). }
  \label{fig:test-case-2c}
  \vspace{-0.25cm}
\end{figure}

\section{Conclusions}
\label{sec:conclusions}

In this paper, we proposed a nonconforming VEM for the
advection-diffusion-reaction problem in the convection-dominated
regime.
Due to the strong convective field with respect to the diffusion term,
we introduced the SUPG stabilization by extending to the nonconforming
VEM the stabilization technique proposed
in~\cite{Benedetto-Berrone-Borio-Pieraccini-Scialo:2016a}.
The stabilization included in the virtual element formulation is a
natural extension of the classical SUPG stabilization for the standard
FEM.
To ensure coercivity of the discrete operators, we modify the SUPG
stabilization by introducing a VEM stabilization of the SUPG
stabilization term.
Optimal convergence rates are obtained from the convergence analysis
under proper assumptions on the regularity of problem coefficients,
the meshes, and the exact solution.
The numerical results confirm the behaviour of the VEM that is
expected from the theory and the stabilizing effect of the additional
SUPG term provides stable discrete solutions even for very large
P\'eclet numbers in the order of $10^6$.


\begin{thebibliography}{10}
\expandafter\ifx\csname url\endcsname\relax
  \def\url#1{\texttt{#1}}\fi
\expandafter\ifx\csname urlprefix\endcsname\relax\def\urlprefix{URL }\fi
\expandafter\ifx\csname href\endcsname\relax
  \def\href#1#2{#2} \def\path#1{#1}\fi

\bibitem{BeiraodaVeiga-Brezzi-Cangiani-Manzini-Marini-Russo:2013}
L.~Beir\~ao~da Veiga, F.~Brezzi, A.~Cangiani, G.~Manzini, L.~D. Marini,
  A.~Russo, Basic principles of virtual element methods, Mathematical Models \&
  Methods in Applied Sciences 23 (2013) 119--214.

\bibitem{BeiraodaVeiga-Manzini-Putti:2015}
L.~Beir\~ao~da Veiga, G.~Manzini, M.~Putti, Post-processing of solution and
  flux for the nodal mimetic finite difference method, Numerical Methods for
  Partial Differential Equations 31~(1) (2015) 336--363.

\bibitem{BeiraodaVeiga-Lipnikov-Manzini:2011}
L.~Beir\~ao~da Veiga, K.~Lipnikov, G.~Manzini, Arbitrary order nodal mimetic
  discretizations of elliptic problems on polygonal meshes, SIAM Journal on
  Numerical Analysis 49~(5) (2011) 1737--1760.

\bibitem{Brezzi-Buffa-Lipnikov:2009}
F.~Brezzi, A.~Buffa, K.~Lipnikov, Mimetic finite differences for elliptic
  problems, ESAIM: Mathematical Modelling and Numerical Analysis 43~(2) (2009)
  277--295.

\bibitem{Manzini-Lipnikov-Moulton-Shashkov:2017}
G.~Manzini, K.~Lipnikov, J.~D. Moulton, M.~Shashkov, Convergence analysis of
  the mimetic finite difference method for elliptic problems with staggered
  discretizations of diffusion coefficients, SIAM Journal on Numerical Analysis
  55~(6) (2017) 2956--2981.
\newblock \href {http://dx.doi.org/10.1137/16M1108479}
  {\path{doi:10.1137/16M1108479}}.

\bibitem{Lipnikov-Manzini-Shashkov:2014}
K.~Lipnikov, G.~Manzini, M.~Shashkov, Mimetic finite difference method, Journal
  of Computational Physics 257 -- Part B (2014) 1163--1227, review paper.

\bibitem{BeiraodaVeiga-Lipnikov-Manzini:2014}
L.~Beir\~ao~da Veiga, K.~Lipnikov, G.~Manzini, The Mimetic Finite Difference
  Method, {I} Edition, Vol.~11 of MS\&A. Modeling, Simulations and
  Applications, Springer, 2014.

\bibitem{Antonietti-BeiraodaVeiga-Scacchi-Verani:2016}
P.~F. Antonietti, L.~Beir\~ao~da Veiga, S.~Scacchi, M.~Verani, A ${C}^1$
  virtual element method for the {C}ahn-{H}illiard equation with polygonal
  meshes, SIAM Journal on Numerical Analysis 54~(1) (2016) 34--56.

\bibitem{Cangiani-Georgoulis-Pryer-Sutton:2016}
A.~Cangiani, E.~H. Georgoulis, T.~Pryer, O.~J. Sutton, A posteriori error
  estimates for the virtual element method, Numerische Mathematik (2017)
  1--37\href {http://dx.doi.org/10.1007/s00211-017-0891-9}
  {\path{doi:10.1007/s00211-017-0891-9}}.

\bibitem{BeiraodaVeiga-Chernov-Mascotto-Russo:2016}
L.~Beir\~ao~da Veiga, A.~Chernov, L.~Mascotto, A.~Russo, Basic principles of hp
  virtual elements on quasiuniform meshes, Mathematical Models \& Methods in
  Applied Sciences 26~(8) (2016) 1567--1598.

\bibitem{BeiraodaVeiga-Brezzi-Marini-Russo:2016a}
L.~Beir\~ao~da Veiga, F.~Brezzi, L.~D. Marini, A.~Russo, {H}(div) and
  {H}(curl)-conforming {VEM}, Numerische Mathematik 133~(2) (2016) 303--332.

\bibitem{BeiraodaVeiga-Brezzi-Marini-Russo:2016b}
L.~Beir\~ao~da Veiga, F.~Brezzi, L.~D. Marini, A.~Russo, Virtual element
  methods for general second order elliptic problems on polygonal meshes,
  Mathematical Models \& Methods in Applied Sciences 26~(4) (2016) 729--750.

\bibitem{BeiraodaVeiga-Brezzi-Marini-Russo:2016c}
L.~Beir\~ao~da Veiga, F.~Brezzi, L.~D. Marini, A.~Russo, Mixed virtual element
  methods for general second order elliptic problems on polygonal meshes,
  ESAIM: Mathematical Modelling and Numerical Analysis 50~(3) (2016) 727--747.

\bibitem{BeiraodaVeiga-Brezzi-Marini-Russo:2016d}
L.~Beir\~ao~da Veiga, F.~Brezzi, L.~D. Marini, A.~Russo, Serendipity nodal vem
  spaces, Computers and Fluids 141 (2016) 2--12.

\bibitem{Benedetto-Berrone-Borio-Pieraccini-Scialo:2016b}
M.~F. Benedetto, S.~Berrone, A.~Borio, S.~Pieraccini, S.~Scial\`{o}, A hybrid
  mortar virtual element method for discrete fracture network simulations,
  Journal of Computational Physics 306 (2016) 148--166.
\newblock \href {http://dx.doi.org/10.1016/j.jcp.2015.11.034}
  {\path{doi:10.1016/j.jcp.2015.11.034}}.

\bibitem{Berrone-Borio-Scialo:2016}
S.~Berrone, A.~Borio, S.~Scial\`o, A posteriori error estimate for a
  {PDE}-constrained optimization formulation for the flow in {DFN}s, SIAM
  Journal on Numerical Analysis 54~(1) (2016) 242--261.
\newblock \href {http://dx.doi.org/10.1137/15M1014760}
  {\path{doi:10.1137/15M1014760}}.

\bibitem{Benedetto-Berrone-Scialo:2016}
M.~F. Benedetto, S.~Berrone, S.~Scial\`o, A globally conforming method for
  solving flow in discrete fracture networks using the virtual element method,
  Finite Elements in Analysis and Design 109 (2016) 23--36.
\newblock \href {http://dx.doi.org/10.1016/j.finel.2015.10.003}
  {\path{doi:10.1016/j.finel.2015.10.003}}.

\bibitem{Berrone-Pieraccini-Scialo:2016}
S.~Berrone, S.~Pieraccini, S.~Scial{\`o}, Towards effective flow simulations in
  realistic discrete fracture networks, Journal of Computational Physics 310
  (2016) 181--201.
\newblock \href {http://dx.doi.org/10.1016/j.jcp.2016.01.009}
  {\path{doi:10.1016/j.jcp.2016.01.009}}.

\bibitem{Wriggers-Rust-Reddy:2016}
P.~Wriggers, W.~T. Rust, B.~D. Reddy,
  \href{http://dx.doi.org/10.1007/s00466-016-1331-x}{A virtual element method
  for contact}, Computational Mechanics 58~(6) (2016) 1039--1050.
\newblock \href {http://dx.doi.org/10.1007/s00466-016-1331-x}
  {\path{doi:10.1007/s00466-016-1331-x}}.
\newline\urlprefix\url{http://dx.doi.org/10.1007/s00466-016-1331-x}

\bibitem{BeiraodaVeiga-Manzini:2015}
L.~Beir\~ao~da Veiga, G.~Manzini, Residual a posteriori error estimation for
  the virtual element method for elliptic problems, ESAIM: Mathematical
  Modelling and Numerical Analysis 49 (2015) 577--599.
\newblock \href {http://dx.doi.org/10.1051/m2an/2014047}
  {\path{doi:10.1051/m2an/2014047}}.

\bibitem{Natarajan-Bordas-Ooi:2015}
S.~Natarajan, P.~A. Bordas, E.~T. Ooi, Virtual and smoothed finite elements: a
  connection and its application to polygonal/polyhedral finite element
  methods, Intenational Journal on Numerical Methods in Engineering 104~(13)
  (2015) 1173--1199.

\bibitem{Berrone-Pieraccini-Scialo-Vicini:2015}
S.~Berrone, S.~Pieraccini, S.~Scial{\`o}, F.~Vicini, A parallel solver for
  large scale {DFN} flow simulations, SIAM Journal on Scientific Computing
  37~(3) (2015) C285--C306.
\newblock \href {http://dx.doi.org/10.1137/140984014}
  {\path{doi:10.1137/140984014}}.

\bibitem{Vacca-BeiraodaVeiga:2015}
G.~Vacca, L.~Beir\~ao~da Veiga, Virtual element methods for parabolic problems
  on polygonal meshes, Numerical Methods for Partial Differential Equations. An
  International Journal 31~(6) (2015) 2110--2134.

\bibitem{BeiraodaVeiga-Brezzi-Marini-Russo:2014}
L.~Beir\~{a}o~da Veiga, F.~Brezzi, L.~D. Marini, A.~Russo, The hitchhiker's
  guide to the virtual element method, Mathematical Models and Methods in
  Applied Sciences 24~(8) (2014) 1541--1573.

\bibitem{BeiraodaVeiga-Brezzi-Marini-Russo:2014b}
L.~Beir\~{a}o~da Veiga, F.~Brezzi, L.~D. Marini, A.~Russo, Virtual element
  methods for general second order elliptic problems on polygonal meshes,
  Mathematical Models and Methods in Applied Sciences 26~(04) (2015) 729--750.
\newblock \href {http://dx.doi.org/10.1142/S0218202516500160}
  {\path{doi:10.1142/S0218202516500160}}.

\bibitem{BeiraodaVeiga-Manzini:2014}
L.~Beir\~ao~da Veiga, L.~Manzini, A virtual element method with arbitrary
  regularity, IMA Journal on Numerical Analysis 34~(2) (2014) 782--799, dOI:
  10.1093/imanum/drt018, (first published online 2013).

\bibitem{Benedetto-Berrone-Pieraccini-Scialo:2014}
M.~F. Benedetto, S.~Berrone, S.~Pieraccini, S.~Scial{\`o}, The virtual element
  method for discrete fracture network simulations, Computer Methods in Applied
  Mechanics and Engineering 280~(0) (2014) 135 -- 156.
\newblock \href {http://dx.doi.org/10.1016/j.cma.2014.07.016}
  {\path{doi:10.1016/j.cma.2014.07.016}}.

\bibitem{Brezzi-Marini:2013}
F.~Brezzi, L.~D. Marini, Virtual element methods for plate bending problems,
  Computer Methods in Applied Mechanics and Engineering 253 (2013) 455--462.
\newblock \href {http://dx.doi.org/10.1016/j.cma.2012.09.012}
  {\path{doi:10.1016/j.cma.2012.09.012}}.

\bibitem{Berrone-Benedetto-Borio:2016chapter}
M.~F. Benedetto, S.~Berrone, A.~Borio, The {V}irtual {E}lement {M}ethod for
  underground flow simulations in fractured media, in: Advances in
  Discretization Methods, Vol.~12 of SEMA SIMAI Springer Series, Springer
  International Publishing, Switzerland, 2016, pp. 167--186.
\newblock \href {http://dx.doi.org/10.1007/978-3-319-41246-7_8}
  {\path{doi:10.1007/978-3-319-41246-7_8}}.

\bibitem{Berrone-Borio:2017}
S.~Berrone, A.~Borio, Orthogonal polynomials in badly shaped polygonal elements
  for the {V}irtual {E}lement {M}ethod, Finite Elements in Analysis \& Design
  129 (2017) 14--31.
\newblock \href {http://dx.doi.org/10.1016/j.finel.2017.01.006}
  {\path{doi:10.1016/j.finel.2017.01.006}}.

\bibitem{Benedetto-Berrone-Borio-Pieraccini-Scialo:2016eccomas}
M.~F. Benedetto, S.~Berrone, A.~Borio, S.~Pieraccini, S.~Scialo, The virtual
  element method for discrete fracture network flow and transport simulations,
  in: ECCOMAS Congress 2016 - Proceedings of the 7th European Congress on
  Computational Methods in Applied Sciences and Engineering, Vol.~2, 2016, pp.
  2953--2970.

\bibitem{Wachspress:1975}
E.~Wachspress, A rational finite element basis, Vol. 114 of Mathematics in
  Science and Engineering, Academic Press, Inc., New York-London, 1975.

\bibitem{Sukumar-Tabarraei:2004}
N.~Sukumar, A.~Tabarraei, Conforming polygonal finite elements, International
  Journal for Numerical Methods in Engineering 61~(12) (2004) 2045--2066.

\bibitem{Hofreither-Langer-Weisser:2016}
C.~Hofreither, U.~Langer, S.~Wei{\ss}er, Convection-adapted {BEM}-based {FEM},
  ZAMM-Journal of Applied Mathematics and Mechanics/Zeitschrift f{\"u}r
  Angewandte Mathematik und Mechanik 96~(12) (2016) 1467--1481.

\bibitem{Weisser:2011}
S.~Wei{\ss}er, Residual error estimate for bem-based fem on polygonal meshes,
  Numerische Mathematik 118~(4) (2011) 765--788.

\bibitem{Droniou:2014}
J.~Droniou, Finite volume schemes for diffusion equations: {I}ntroduction to
  and review of modern methods, Mathematical Models and Methods in Applied
  Sciences 24~(08) (2014) 1575--1619.

\bibitem{Droniou-Eymard-Gallouet-Herbin:2010}
J.~Droniou, R.~Eymard, T.~Gallou\"et, R.~Herbin, A unified approach to mimetic
  finite difference, hybrid finite volume and mixed finite volume methods,
  Mathematical Models and Methods in Applied Sciences 20~(02) (2010) 265--295.
\newblock \href {http://dx.doi.org/10.1142/S0218202510004222}
  {\path{doi:10.1142/S0218202510004222}}.

\bibitem{DiPietro-Ern:2014}
D.~A.~D. Pietro, A.~Ern, Hybrid high-order methods for variable-diffusion
  problems on general meshes, Comptes Rendus Mathematique 353~(1) (2015)
  31--34.
\newblock \href {http://dx.doi.org/10.1016/j.crma.2014.10.013}
  {\path{doi:10.1016/j.crma.2014.10.013}}.

\bibitem{DiPietro-Ern:2011}
D.~A. Di~Pietro, A.~Ern, Mathematical aspects of discontinuous {G}alerkin
  methods, Springer Science \& Business Media, 2011.

\bibitem{Cangiani-Georgoulis-Houston:2014}
A.~Cangiani, E.~H. Georgoulis, P.~Houston, {$hp$}-version discontinuous
  {G}alerkin methods on polygonal and polyhedral meshes, Mathematical Models
  and Methods in Applied Sciences 24~(10) (2014) 2009--2041.

\bibitem{Cockburn-Gopalakrishnan-Lazarov:2009}
B.~Cockburn, J.~Gopalakrishnan, R.~Lazarov, Unified hybridization of
  discontinuous {G}alerkin, mixed, and continuous {G}alerkin methods for second
  order elliptic problems,, SIAM Journal on Numerical Analysis 47~(2) (2009)
  1319--1365.

\bibitem{Droniou-Eymard-Herbin:2016}
J.~Droniou, R.~Eymard, R.~Herbin, Gradient schemes: Generic tools for the
  numerical analysis of diffusion equations, ESAIM: Mathematical Modelling and
  Numerical Analysis 50~(3) (2016) 749--781.
\newblock \href {http://dx.doi.org/10.1051/m2an/2015079}
  {\path{doi:10.1051/m2an/2015079}}.

\bibitem{Droniou-Eymard-Herbin:2013}
J.~Droniou, R.~Eymard, R.~Herbin, Gradient schemes: a generic framework for the
  discretisation of linear, nonlinear and nonlocal elliptic and parabolic
  problems, Mathematical Models and Methods in Applied Sciences 23~(13) (2013)
  2395--2432.
\newblock \href {http://dx.doi.org/10.1142/S0218202513500358}
  {\path{doi:10.1142/S0218202513500358}}.

\bibitem{Manzini-Russo-Sukumar:2014}
G.~Manzini, A.~Russo, N.~Sukumar, New perspectives on polygonal and polyhedral
  finite element methods, Mathematical Models \& Methods in Applied Sciences
  24~(8) (2014) 1621--1663.

\bibitem{Cangiani-Manzini-Russo-Sukumar:2015}
A.~Cangiani, G.~Manzini, A.~Russo, N.~Sukumar, Hourglass stabilization of the
  virtual element method, International Journal on Numerical Methods in
  Engineering 102~(3-4) (2015) 404--436.

\bibitem{DiPietro-Droniou-Manzini:2018}
D.~A. Di~Pietro, J.~Droniou, G.~Manzini, Discontinuous skeletal gradient
  discretisation methods on polytopal meshes, Journal of Computational Physics
  355 (2018) 397--425.
\newblock \href {http://dx.doi.org/https://doi.org/10.1016/j.jcp.2017.11.018}
  {\path{doi:https://doi.org/10.1016/j.jcp.2017.11.018}}.

\bibitem{Cangiani-Gyrya-Manzini-Sutton:2017:GBC:chbook}
A.~Cangiani, V.~Gyya, G.~Manzini, S.~O., Chapter 14: Virtual element methods
  for elliptic problems on polygonal meshes, in: K.~Hormann, N.~Sukumar (Eds.),
  Generalized Barycentric Coordinates in Computer Graphics and Computational
  Mechanics, CRC Press, Taylor \& Francis Group, 2017, pp. 1--20.

\bibitem{Ahmad-Alsaedi-Brezzi-Marini-Russo:2013}
B.~Ahmad, A.~Alsaedi, F.~Brezzi, L.~D. Marini, A.~Russo, Equivalent projectors
  for virtual element methods, Computers \& Mathematics with Applications 66
  (2013) 376--391.

\bibitem{AyusodeDios-Lipnikov-Manzini:2016}
B.~Ayuso~de Dios, K.~Lipnikov, G.~Manzini, The non-conforming virtual element
  method, ESAIM: Mathematical Modelling and Numerical Analysis 50~(3) (2016)
  879--904.

\bibitem{Lipnikov-Manzini:2014}
K.~Lipnikov, G.~Manzini, A high-order mimetic method for unstructured
  polyhedral meshes, Journal of Computational Physics 272 (2014) 360--385.

\bibitem{Cangiani-Manzini-Sutton:2016}
A.~Cangiani, G.~Manzini, O.~Sutton, Conforming and nonconforming virtual
  element methods for elliptic problems, IMA Journal on Numerical
  AnalysisAccepted June 2016.
\newblock \href {http://dx.doi.org/10.1093/imanum/drw036}
  {\path{doi:10.1093/imanum/drw036}}.

\bibitem{Cangiani-Gyrya-Manzini:2016}
A.~Cangiani, V.~Gyrya, G.~Manzini, The non-conforming virtual element method
  for the {S}tokes equations, SIAM Journal on Numerical Analysis 54~(6) (2016)
  3411--3435.

\bibitem{Antonietti-Manzini-Verani:2018}
P.~F. Antonietti, G.~Manzini, M.~Verani, The fully nonconforming {V}irtual
  {E}lement method for biharmonic problems, M3AS Math. Models Methods Appl.
  Sci. 28~(2).

\bibitem{Zhao-Chen-Zhang:2016}
J.~Zhao, S.~Chen, B.~Zhang, The nonconforming virtual element method for plate
  bending problems, Mathematical Models \& Methods in Applied Sciences 26~(9)
  (2016) 1671--1687.

\bibitem{Crouzeix-Raviart:1973}
M.~Crouzeix, P.-A. Raviart, Conforming and nonconforming finite element methods
  for solving the stationary {S}tokes equations. {I}, Rev. Francaise Automat.
  Informat. Recherche Op\'erationnelle S\'er. Rouge 7~(R-3) (1973) 33--75.

\bibitem{Fortin-Soulie:1983}
M.~Fortin, M.~Soulie, \href{http://dx.doi.org/10.1002/nme.1620190405}{A
  non-conforming piecewise quadratic finite element on triangles},
  International Journal for Numerical Methods in Engineering 19~(4) (1983)
  505--520.
\newblock \href {http://dx.doi.org/10.1002/nme.1620190405}
  {\path{doi:10.1002/nme.1620190405}}.
\newline\urlprefix\url{http://dx.doi.org/10.1002/nme.1620190405}

\bibitem{Crouzeix-Falk:1989}
M.~Crouzeix, R.~S. Falk, Nonconforming finite elements for the {S}tokes
  problem, Math. Comp. 52 (1989) 437--456.

\bibitem{Stoyan-Baran:2006}
G.~Stoyan, A.~E. Baran,
  \href{http://dx.doi.org/10.1016/j.camwa.2005.10.011}{Crouzeix-{V}elte
  decompositions for higher-order finite elements}, Comput. Math. Appl.
  51~(6-7) (2006) 967--986.
\newblock \href {http://dx.doi.org/10.1016/j.camwa.2005.10.011}
  {\path{doi:10.1016/j.camwa.2005.10.011}}.
\newline\urlprefix\url{http://dx.doi.org/10.1016/j.camwa.2005.10.011}

\bibitem{Baran-Stoyan:2007}
A.~E. Baran, G.~Stoyan,
  \href{http://dx.doi.org/10.1007/s00607-007-0219-1}{Gauss-{L}egendre elements:
  {A} stable, higher order non-conforming finite element family}, Computing
  79~(1) (2007) 1--21.
\newblock \href {http://dx.doi.org/10.1007/s00607-007-0219-1}
  {\path{doi:10.1007/s00607-007-0219-1}}.
\newline\urlprefix\url{http://dx.doi.org/10.1007/s00607-007-0219-1}

\bibitem{Ainsworth:2005}
M.~Ainsworth, Robust a posteriori error estimation for nonconforming finite
  element approximation, SIAM Journal on Numerical Analysis 42~(6) (2005)
  2320--2341.
\newblock \href {http://dx.doi.org/10.1137/S0036142903425112}
  {\path{doi:10.1137/S0036142903425112}}.

\bibitem{Rannacher-Turek:1992}
R.~Rannacher, S.~Turek, Simple nonconforming quadrilateral {S}tokes element,
  Numer. Methods Partial Differential Equations 8~(2) (1992) 97--111.
\newblock \href {http://dx.doi.org/10.1002/num.1690080202}
  {\path{doi:10.1002/num.1690080202}}.

\bibitem{Matthies-Tobiska:2005}
G.~Matthies, L.~Tobiska,
  \href{http://dx.doi.org/10.1007/s00211-005-0648-8}{Inf-sup stable
  non-conforming finite elements of arbitrary order on triangles}, Numerische
  Mathematik 102~(2) (2005) 293--309.
\newblock \href {http://dx.doi.org/10.1007/s00211-005-0648-8}
  {\path{doi:10.1007/s00211-005-0648-8}}.
\newline\urlprefix\url{http://dx.doi.org/10.1007/s00211-005-0648-8}

\bibitem{Matthies:2007}
G.~Matthies, \href{http://eudml.org/doc/250052}{Inf-sup stable nonconforming
  finite elements of higher order on quadrilaterals and hexahedra}, ESAIM:
  Mathematical Modelling and Numerical Analysis 41~(5) (2007) 855--874.
\newline\urlprefix\url{http://eudml.org/doc/250052}

\bibitem{Ganesan-Tobiska:2010}
S.~Ganesan, L.~Tobiska, Stabilization by local projection for
  convection--diffusion and incompressible flow problems, Journal of Scientific
  Computing 43~(3) (2010) 326--342.

\bibitem{Brezzi-Franca-Russo:1998}
F.~Brezzi, L.~P. Franca, A.~Russo, Further considerations on residual-free
  bubbles for advective-diffusive equations, Computer Methods in Applied
  Mechanics and Engineering 166~(1-2) (1998) 25--33.

\bibitem{Franca-Tobiska:2002}
L.~P. Franca, L.~Tobiska, Stability of the residual free bubble method for
  bilinear finite elements on rectangular grids, IMA Journal of Numerical
  Analysis 22~(1) (2002) 73--87.

\bibitem{Brooks-Hughes:1982}
A.~N. Brooks, T.~J. Hughes, Streamline upwind/{P}etrov-{G}alerkin formulations
  for convection dominated flows with particular emphasis on the incompressible
  {N}avier-{S}tokes equations, Computer Methods in Applied Mechanics and
  Engineering 32~(1-3) (1982) 199--259.

\bibitem{Franca-Frey-Hughes:1992}
L.~P. Franca, S.~L. Frey, T.~Hughes, Stabilized finite element methods: {I}.
  {A}pplication to the advective-diffusive model, Computer Methods in Applied
  Mechanics and Engineering 95~(2) (1992) 253--276.

\bibitem{Gelhard-Lube-Olshanskii-Starcke:2005}
T.~Gelhard, G.~Lube, M.~A. Olshanskii, J.-H. Starcke, Stabilized finite element
  schemes with {LBB}-stable elements for incompressible flows, Journal of
  Computational and Applied Mathematics 177~(2) (2005) 243--267.

\bibitem{Roos-Stynes-Tobiska:2008}
H.-G. Roos, M.~Stynes, L.~Tobiska, Robust numerical methods for singularly
  perturbed differential equations: convection-diffusion-reaction and flow
  problems, Springer Science \& Business Media, 2008.

\bibitem{Burman-Smith:2011}
E.~Burman, G.~Smith, Analysis of the space semi-discretized {SUPG} method for
  transient convection--diffusion equations, Mathematical Models and Methods in
  Applied Sciences 21~(10) (2011) 2049--2068.

\bibitem{Burman:2010}
E.~Burman, Consistent {SUPG}-method for transient transport problems: Stability
  and convergence, Computer Methods in Applied Mechanics and Engineering
  199~(17) (2010) 1114--1123.

\bibitem{Benedetto-Berrone-Borio-Pieraccini-Scialo:2016a}
M.~F. Benedetto, S.~Berrone, A.~Borio, S.~Pieraccini, S.~Scial{\`o}, Order
  preserving {SUPG} stabilization for the virtual element formulation of
  advection-diffusion problems, Computer Methods in Applied Mechanics and
  Engineering 311 (2016) 18--40.

\bibitem{BeiraodaVeiga-Brezzi-Marini-Russo:2015}
L.~Beir\~{a}o~da Veiga, F.~Brezzi, L.~D. Marini, A.~Russo, Virtual element
  methods for general second order elliptic problems on polygonal meshes,
  Mathematical Models and Methods in Applied Sciences 26~(04) (2015) 729--750.
\newblock \href {http://dx.doi.org/10.1142/S0218202516500160}
  {\path{doi:10.1142/S0218202516500160}}.

\bibitem{Eymard-Henri-Herbin-Hubert-Klofkorn-Manzini:2011:FVCA6:benchmark:proc-peer}
R.~Eymard, G.~Henri, R.~Herbin, R.~Klofkorn, G.~Manzini, {3D B}enchmark on
  discretizations schemes for anisotropic diffusion problems on general grids,
  in: J.~Fort, J.~Furst, J.~Halama, R.~Herbin, F.~Hubert (Eds.), Finite Volumes
  for Complex Applications {VI}. Problems \& Perspectives, Vol.~2, Springer,
  Prague, 2011, pp. 95--130.

\end{thebibliography}

\end{document}